\DeclareMathAlphabet{\mathpzc}{OT1}{pzc}{m}{it}
\def\cal{\mathcal}
\def\d{{\mathrm d}}
\def\F2{\mathbb F_2}
\def\A2{{\mathcal A}_2}
\def\As{{\mathcal As}}
\def\Z{{\mathbb Z}}
\def\R{{\mathbb R}}
\def\SC{{\cal{SC}}}
\def\sc{{\mathrm{SC}}}
\def\hsc{{\mathrm{\bf{sc}}}}
\def\rBr{{\mathcal{RB}}}
\def\SCvor{{\cal{SC}}^{\rm vor}}
\def\dgvs{{\bf dgvs}}
\def\Def{{\rm Def}}
\def\det{{\rm det}}
\def\Fin{{\bf Fin}}
\def\Coder{{\rm Coder}}
\def\Hom{{\rm Hom}}
\def\End{{\rm End}}
\def\sgn{{\rm sgn}}
\def\dd{{\mathbf d}}
\def\su{{\mathbf s}}
\def\cl{\mathpzc c}
\def\op{\mathpzc o}
\def\kfield{\mathbf k}
\def\ie{\emph{i.e.} }
\def\Ainf{\As_\infty}
\def\ot{\otimes} 
\def\Ch{\mathrm{\bf Ch}(\Z)} 
\def\Ba{\mathcal{B}}
\newcommand{\RS}{\mathcal{RS}}
\newcommand{\un}[1]{\underline{#1}}
\newcommand{\colim}{\text{colim}}
\newcommand{\hocolim}{\text{hocolim}}
\newcommand{\delZ}{\delta_{\Z}}
\newcommand{\cat}{\mathbf}
\newcommand{\Tas}{\mathcal{T}_{As}}
\newcommand{\Tasc}{\mathcal{T}_{As,\sq}}
\newcommand{\Tasr}{\mathcal{T}_{As,\bullet}}
\newcommand{\lft}{\text{left}}
\newcommand{\rgt}{\text{right}}
\newcommand{\Nest}{\mathrm{Nest}}
\newcommand{\Nestas}{\mathrm{Nest}_{As}}
\def\E{{\mathrm{E}}}
\def\Dc{\mathcal {D^{\cal P}_{\cl}}}
\def\Do{\mathcal {D^{\cal P}_{\op}}}
\def\Cor{\mathrm{Cor}}
\newcommand{\ac}{\scriptstyle \text{\rm !`}}
\newcommand{\wiggly}{\xymatrix@1@C=15pt{  && \ar@{~}[ll]}}
\newcommand{\straight}{\xymatrix@1@C=30pt{  & \ar@{-}[l]}}
\newtheorem{thm}{Theorem}[subsection]
\newtheorem{lem}[thm]{Lemma}
\newtheorem{prop}[thm]{Proposition}
\newtheorem{cor}[thm]{Corollary}
\theoremstyle{definition}
\newtheorem{defn}[thm]{Definition}
\newtheorem{ex}[thm]{Example}
\newtheorem{nota}[thm]{Notation}
\newtheorem*{nota*}{Notation}
\theoremstyle{remark}
\newtheorem{rem}[thm]{Remark}
\theoremstyle{remark}
\newtheorem{Convention}[thm]{Convention}
\begin{document}

\title[On the deformation complex of homotopy affine actions]{On the deformation complex of homotopy \\ affine actions}
\author{Eduardo Hoefel}
\address{Universidade Federal do Paran\'a, Departamento de Matem\'atica C.P. 019081, 81531-990 Curitiba, PR - Brazil }
\email{hoefel@ufpr.br}
\author{Muriel Livernet}
\address{Universit\'e Paris Diderot, Sorbonne Paris Cit\'e, IMJ-PRG, UMR 7586 CNRS, 75013 
  \mbox{Paris}, France}
\email{livernet@math.univ-paris-diderot.fr}
\author{Alexandre Quesney}
\address{Universidade de S\~ao Paulo,
Instituto de Ci\^encias Matem\'aticas e de Computa\c c\~ao,
Avenida Trabalhador S\~ao-carlense, 400 -
CEP: 13566-590 - S\~ao Carlos, SP -
Brazil }
\email{math@quesney.org} 
\thanks{This collaboration is funded by the Capes-Cofecub program MA 763-13 "Factorization algebras in Mathematical Physics and algebraic topology". The third author was partially supported by "CAPES - Projeto 88881.030367/2013-01 Bolsa PDJ".}
\keywords{Koszul Operads, Homotopy Algebras, Deformation Theory}
\subjclass[2000]{18G55, 18D50}
\date{\today}
\begin{abstract}
An affine action of an associative algebra $A$ on a vector space $V$ is an algebra morphism 
$A \to V \rtimes {\rm End}(V)$, where $V$ is a vector space and 
$V \rtimes {\rm End}(V)$ is the algebra of affine transformations of $V$.
The one dimensional version of the Swiss-Cheese operad, denoted
$\hsc_1$, is the operad that governs affine actions of associative algebras. 
This operad is Koszul and admits a minimal
model denoted by $(\hsc_1)_\infty$. Algebras over this minimal model
are called {\it Homotopy Affine Actions}, they consist of an 
$A_\infty$-morphism $A \to V \rtimes {\rm End}(V)$, where $A$ is an
$A_\infty$-algebra. In this paper we prove a relative version of Deligne's conjecture. In other words, we show that the deformation complex of
a homotopy affine action has the structure of an algebra over an ${\rm SC}_2$ operad. That structure is naturally compatible with the ${\rm E}_2$ structure on the deformation complex of the $A_\infty$-algebra.
\end{abstract}
\maketitle

%
%
%
%

\section*{Introduction}

The original Deligne's conjecture, now proved by many authors, states that there is an action of an operad weakly equivalent to the operad of chains
of the 2-little discs operad $\mathcal D_2$ on the Hochschild cochain complex of an associative algebra. This question naturally arose as a topological counterpart of the fact that the Hochschild cohomology of an associative algebra have a Gerstenhaber algebra structure, that is, an action of the homology of $\mathcal D_2$.

The Hochschild cochain complex of an algebra controls deformations of the algebra and is also known as the deformation complex of the associative algebra. In this paper we are interested in understanding the structure of the deformation complex associated to a pair $(A,V)$ where $A$ is an associative algebra and $V$ is a vector space endowed with an affine action $A \to V \rtimes {\rm End}(V)$. In this relative version of the original Deligne's conjecture, the topological operads in actions are the Swiss-cheese operads, which are $2$-colored operads based on the little discs operads. Indeed we go further in this analysis since we prove that
the deformation complex of an operad under $\sc_1$ is an $\sc_2$-algebra. 
Before going through the details of the paper, let us give an historical background.

\medskip

{\it State of the art.} 
An operad is said to be $\E_n$ if it is weakly equivalent to the operad of chains of the little discs operad $\mathcal D_n$.
In the case $n=1$ the operad for associative algebra $\As$ is $\E_1$ as well as its minimal model $\As_\infty$. Any $\E_1$-algebra is in particular an $\As_\infty$-algebra and admits a deformation complex, similar to the Hochschild complex of an associative algebra. Indeed an $\As_\infty$-algebra is defined through its bar construction, which is its deformation complex.
An $\As_\infty$-version of Deligne's conjecture states that the deformation complex of any $\E_1$-algebra is an $\E_2$-algebra.
This was done by Kontsevich and Soibelman in \cite{KS00}, where they build an explicit operad $\mathcal M$ acting on the deformation complex of an $\As_\infty$-algebra (see also  \cite{CamWill16} for a very nice proof of this result).
This operad will correspond to the closed part of ours. We refer also to Kauffman and Schwell in \cite{KS10} for a topological proof of this fact.

The $2$-dimensional Swiss cheese operad  $\SCvor_2$ has been introduced by S. Voronov in \cite{Voronov99} as a topological 2-colored operad, in order to understand spaces of configuration of points used by M. Kontsevich in deformation quantization and by B. Zwiebach in \cite{Zwie97} in Open-Closed string field theory. The two colors are commonly called closed and open.
It is shown by A. Voronov that its homology, $\hsc_2^{vor}$, governs Gerstenhaber algebras (the closed structure) acting on associative algebras (the open structure).

In  \cite{Hoefel09}, the first author studied another version of the Voronov's Swiss-Cheese operad introduced by M. Kontsevich in \cite{Kontse99}, that we call here the Swiss-cheese operad and denote $\SC_2$.  This operad admits operations having only closed input and an open output. In particular, there is an operation that transforms a closed variable into an open one, called the whistle. The operad $\SCvor_2$ is a suboperad of $\SC_2$.
This operad is more accurate in compactification and deformation theory  though it has the disadvantage of not being  formal, as proved in \cite{Liv15}. 
Given an associative algebra, the pair $(HH^*(A),A)$  is  an algebra over $\hsc_2$, the homology of $\SC_2$.
The map $HH^*(A,A)\rightarrow A$ sends $HH^*(A,A)$ onto $HH^0(A,A)$, the center of $A$. 

As in the classical case, an operad is said to be $\sc^{(vor)}_n$ if it is weakly equivalent to the operad of chains of the topological operad $\SC^{(vor)}_n$. A differential graded vector space is said to be an $\sc^{(vor)}_n$-algebra if it has an action of an $\sc^{(vor)}_n$ operad. 
 In \cite{DTT11}, V. Dolgushev, D. Tamarkin and B. Tsygan proved  that the pair $(CH^*(A,A),A)$ is an $\sc^{vor}_2$-algebra, where $CH^*(A,A)$ denotes the Hochschild cochain complex of the associative algebra $A$.

\medskip

{\it Main result.} In this paper we prove a  relative $\As_\infty$ version of the Deligne's conjecture. More precisely, we prove that the deformation complex of an $\sc_1$-algebra is an $\sc_2$-algebra. Indeed we prove that the deformation complex of any operad under an $\sc_1$-operad is an $\sc_2$-algebra. 
Notice that our approach is different in spirit  
from \cite{DTT11}:  via deformation theory, we obtain an $\sc_2$-algebra $(CH^*(A,A),CH^*(A,M))$, where $M$ is an $A$-bimodule, such that its homology is the  $\hsc_2$-algebra $(CH^*(A,A),A)$.  
However, the $\sc_2$ operad that we build here does not act on the pair $(CH^*(A,A),A)$, as we explain in Remark \ref{R:DTT}.

\medskip

{\it Contents of the paper.} 
The algebra of affine transformations of a vector space $V$ will be denoted by: 
${\rm End}^+(V) := V \rtimes {\rm End}(V)$. It is the deformation complex of the $\E_0$-algebra $V$, as pointed out by Kontsevich in \cite{Kontse99}. 
The first parts of the paper are concerned with computing the deformation complex of an $\sc_1$-algebra. In dimension 1, the relative case behaves like the $\E_1$-case. An $\hsc_1$-algebra is a pair $(A,V)$ where $A$ is associative and $V$ is endowed with an algebra morphism $A \to {\rm End}^+(V)$, hence $V$ is a (left) $A$-module and there is a map $f:A\rightarrow V$ such that $f(ab)=af(b)$. The operad $\hsc_1$ is Koszul (Proposition \ref{P:hsc1Koszul}) and admits a minimal
model $(\hsc_1)_\infty$.  Thus any $\sc_1$-algebra is an $(\hsc_1)_\infty$-algebra.
Proposition \ref{P:maurercartan} says that an  $(\hsc_1)_\infty$-algebra $(A,V)$ is given by an $\As_\infty$-algebra $A$ and an $\As_\infty$-morphism $A \to \End(V)^+$.

The main result of Section 2,  Theorem \ref{T:mapcone} states that the deformation complex of an $(\hsc_1)_\infty$-algebra $(A,V)$  is the mapping cone of a map induced by the whistle from the Hochschild cochain complex of $A$ to the Hochschild cochain complex of $A$ with coefficients in $\End^+(V)$.  An operad $\mathcal Q$ is an operad under $\mathcal P$ if there is a map of operads $\eta:\cal P\rightarrow \cal Q$. The endomorphism operad of a $\cal P$-algebra is a special case of an operad  under $\mathcal P$. In section 2, we also define  the deformation complex of an operad under $(\hsc_1)_\infty$ and prove in
Theorem \ref{T:coneoperad} that it is the mapping cone of a map between Hochschild cochain complexes of operads as defined by Turchin 
in \cite{Tu05}.

Section 3 is devoted to the construction of the operad $\rBr$, called the operad of relative braces. This operad is built in the category of chain complexes over $\Z$. In Theorem \ref{T:action}. we prove that $\rBr$ acts on the deformation complex of any operad $\cal P$ under
$(\hsc_1)_\infty$. The operad of relative braces has for closed part the operad $\cal M$ described by Kontsevich and Soibelman in \cite{KS00}, and that it acts on the deformation complex of an operad under $\As_\infty$ is the result obtained therein. Here we describe the open part and the interaction with the closed part.

Section 4 is devoted to the proof of the Swiss-cheese Deligne's conjecture, that is, the proof that the operad $\rBr$ is an $\sc_2$-operad, weakly equivalent to the operad of chains of the Swiss cheese topological operad $\SC_2$, over $\Z$ (Theorem \ref{th: equiv}). In \cite{Q-SCMRL}, the third author proved that  a third operad $\RS$ is an $\sc_2$-operad. Hence,
in Section 4 we prove a weak homotopy equivalence $\rBr\rightarrow \RS$. This is performed by using cellular decompositions as in \cite{Batanin-Berger-Lattice}.

That the closed part of $\RS$ and $\rBr$ have isomorphic homologies over a field of characteristic $0$ has been obtained by D. Dolgushev and T. Willwacher in \cite{DolWil14}. Our result is a generalization of their, since we have a weak equivalence over $\Z$.
\tableofcontents
\section{Preliminaries}

\subsection{On differential graded vector spaces} 

We work on a ground field $\kfield$ of characteristic $0$.
 The category  $\dgvs$ is the category of lower $\Z$-graded $\kfield$-vector spaces together with a differential of degree $-1$.  Objects in $\dgvs$ are called for short  dgvs.  The degree of $x\in V$, where $V$ is a dgvs is  denoted by $|x|$. 
 This category is enriched over $\dgvs$ and  the differential graded vector space of maps from $V$ to $W$ is denoted by $\Hom(V,W)$. The dgvs
$\Hom(V,V)$ is denoted $\End(V)$.

  The suspension of a dgvs $V$ is denoted by $\su V$ and defined as $(\su V)_n=V_{n-1}$. The suspension $\su$ is also seen as a map
 $\su: V\rightarrow \su V$ of 
  $\dgvs$ of degree $+1$.

  All along the text, we use Koszul sign convention for morphisms, that is, if $f:V_1\rightarrow W_1$ and $g: V_2\rightarrow W_2$ are morphisms
  of degree $|f|$ and $|g|$ respectively, then $f\otimes g:V_1\otimes V_2\rightarrow W_1\otimes W_2$ is a morphism of degree $|f|+|g|$ defined by
  $$(f\otimes g)(v_1\otimes v_2)=(-1)^{|g||v_1|} f(v_1)\otimes g(v_2).$$

\subsection{Notation for operads} We assume the reader to be familiar with operad theory.  For notation and definition on colored operads and Koszul duality for colored operads we refer to \cite{vanderlaan}. We refer also to \cite[Section 2]{HoeLiv12} for details on $2$-colored quadratic operads and their Koszul dual.

Consequently, this section is devoted to notations rather than to definitions. 

\subsubsection{$S$-operads}
Let $(\cal C,\otimes,\star)$ be a symmetric monoidal category.  Let $S$ be a finite set of colors.  
The category $\Fin_S$ is the category whose objects $(X,x_0;i:X\rightarrow S)$ are pointed, 
non-empty finite sets together with a map $i$, and whose morphisms are pointed bijections commuting with $i$. 
An {\sl $S$-collection} in $\mathcal C$ is a contravariant functor from $\Fin_S$ to $\cal C$.
There is a notion of plethysm in this category, denoted $\circ_S$, and 
an {\sl $S$-operad}  in $\cal C$ is an $S$-collection $\cal P$ that is a monoid for $\circ_S$.

We will use most frequently the following notation:  giving  an object in $\Fin_S$ of the form
$(X=\{0,\ldots, k\},x_0=0,i:X\rightarrow S)$  amounts to giving a sequence of elements $(s_1,\ldots,s_n)$ of $S$ together with an element $s=i(0)$.
An $S$-collection $M$ is written as a family $M(I;s)\in\cal C$, where $I$ runs over the aforementioned sequences, and $s=i(0)$.
We will denote by $|I|$ the number of elements in the sequence and for $u\in S$ the set $\{s_i|s_i=u\}$ is denoted  $I_u$.

With this interpretation in mind, there is a right action of the symmetric group 
\begin{align*}
M(s_1,\ldots,s_n;x)&\rightarrow M(s_{\sigma(1)},\ldots,s_{\sigma(n)};x)\\
m & \mapsto  m\cdot\sigma
\end{align*}
induced by the bijections in $\Fin_S$.

Let $\cal P$ be an $S$-operad. For $p\in \cal P(I;s)$ and $q\in \cal P(J;s_i)$, such that the $i$-th term of the sequence $I$ is $s_i$, the partial composition is denoted, as usual, by 
$p\circ_i q$.

\subsubsection{Regular $S$-operads}  Functors from the discrete category whose objects are pointed sequences $(I;s)$ to $\mathcal C$ are called $S$-graded objects. Similarly to the case of uncolored operads, plethysm can be defined in this category, and then monoids, called {\sl non-symmetric $S$-operads}.
The forgetful functor from $S$-operads to non-symmetric $S$-operads admits a left adjoint ${\mathcal L}_{S}$. 
An $S$-operad $\mathcal P$ is  {\sl regular} if there exists a non-symmetric $S$-operad $\cal P^{ns}$ so that
$$\mathcal P={\mathcal L}_{S}(\cal P^{ns})$$

\subsubsection{$S$-operads described by generators and relations} 
We will denote by $\mathcal F(M)$ the free $S$-operad generated by the $S$-collection $M$ and by $\mathcal F(M;R)$ its quotient by the ideal
generated by $R\subset \mathcal F(M)$.

\subsubsection{Algebras over an $S$-operads} Let $A=(A_s)_{s\in S}$ be a family 
of objects of $\cal C$. Let $I$ be a sequence $(s_1,\ldots,s_k)$ of elements of $S$.
We denote by $A^{I}$ the object $A_{s_1}\otimes A_{s_2}\otimes \ldots\otimes A_{s_k}$.
The endomorphism operad $\End_A$ is the $S$-operad defined by $\End_A(I;s)=\Hom(A^I;A_s)$. By definition,
$A$ is {\sl an algebra over the $S$-operad $\mathcal P$} (or $\mathcal P$-algebra) providing there is a map of operads $\gamma:\mathcal P\rightarrow \End_A$.

For a sequence $I$ such that $|I|=k$, for  $p\in \cal P(I;s)$ and $\underline a=(a_1,\ldots,a_k)\in A^{\otimes I}$ we denote by
$p(a_1,\ldots,a_k)$ the element of $A_s$ obtained as $\gamma(p)(\underline a)$.

In the category $\dgvs$,
the Koszul sign convention implies that if $A$ is a $\cal P$-algebra, then for $p\in \cal P((x_1,\ldots,x_k);s)$ and $q\in \cal P(J;t)$, such that
$t=x_i$ and $|J|=l$

$$(p\circ_i q)(a_1,\ldots,a_{k+l-1})=(-1)^{|q|\sum_{r=1}^{i-1} |a_r|}p(a_1,\ldots,a_{i-1},q(a_i,\ldots,a_{i+l-1}),a_{i+l},\ldots, a_{k+l-1}).$$

\subsubsection{Suspension of $S$-collections and $S$-operads}\label{suspension}
The suspension of the $S$-collection $\cal P$ is
$$\Lambda\cal P(I;x)=s^{1-|I|}\cal P(I;x)\otimes \sgn_{|I|}.$$
If $\cal P$ is a $\{\cl,\op\}$-colored operad, then the structure of $\cal P$-algebra on the pair $(V_\cl,V_\op)$ is equivalent to the structure of $\Lambda\cal P$-algebra on the pair $(sV_\cl,sV_\op)$.

 The suspension of the $\{\cl,\op\}$-collection $\cal P$ with respect to the color $\cl$ is
 $$\Lambda_\cl\cal P(I;x)=s^{\delta_{x,\cl}-|I_\cl|}\cal P(I;x)\otimes \sgn_{|I_\cl|},$$
 where $\delta$ denotes the Kronecker symbol.
If $\cal P$ is an operad, then the structure of $\cal P$-algebra on the pair $(V_\cl,V_\op)$ is equivalent to the structure of $\Lambda_\cl\cal P$-algebra on the pair $(sV_\cl,V_\op)$.

\subsubsection{Convention for the $\circ$-product}\label{S:preLie}
Let $\cal P$ be an $S$-operad in the category $\dgvs$. For $p\in \cal P((x_1,\ldots,x_k);s)$ and $q\in \cal P(J;t)$, define:
$$p\circ q=\sum_{i| x_i=t} p\circ_i q.$$
Notice that {\sl a priori} this notation is not well constrained since each $p\circ_i q$ might live in different components of $\mathcal P$. However, if
one considers the (weight) graded vector space 
$$\mathcal O(\mathcal P)_k=\bigoplus_{|I|=k+1,s\in S} \mathcal P(I;s),$$
then one has 
$$p\in \mathcal O(\mathcal P)_k, q\in O(\mathcal P)_l \implies p\circ q\in O(\mathcal P)_{k+l}.$$
Furthermore, the $\circ$ product is a graded pre-Lie product.

\subsection{On the topological Swiss cheese operad} There exist two versions of the Swiss cheese operad, the original one has been defined by Voronov in \cite{Voronov99}. The one we consider in this paper is a slightly different version that  has been considered by Kontsevich in \cite{Kontse99}.

Let $\mathcal D_n$ be the $n$-little discs operad. The topological operad $\SC_n$ is a $2$-colored operad, where the set of colors is $\{\cl,\op\}$.

For $k\geq 1$ and $J\in \{\cl,\op\}^k$ , set $J_\cl=\{x\in J, x=\cl\}$ and  $J_\op=\{x\in J, x=\op\}$ .

The topological space $\SC_n(J;\cl)$ is $\cal D_n(J)$, if $J_\cl=J$ and is the empty set otherwise.

The topological space $\SC_n(J;\op)$ is the space of labeled configurations of  
non overlapping $|J_\cl|$ balls and $|J_\op|$ semiballs  in the unit  upper semiball of dimension $n$, the semiballs being centered 
on the hyperplane of $\R^n$ whose equation is $x_{n}=0$. With our convention, $J_\op$ can be empty, that is, we allow operations having only closed inputs and an open output, contrary to the operad built by Voronov. However, for $x\in\{\cl,\op\}$, $\SC_n(\emptyset;x)$ is the empty space.

We refer to \cite{Kontse99} and \cite{HoeLiv13} for more details on $\SC_n$.

\subsection{On $\sc_n$ and $\hsc_n$-algebras} By definition an $\sc_n$-algebra is an algebra over an operad which is weakly equivalent to the
operad of singular chains with coefficients in $\kfield$ of $\SC_n$. Two differential graded operads are said to be weakly equivalent if there is a zig-zag
of quasi-isomorphisms of operads relating them. The homology operad of $\SC_n$ is denoted by $\hsc_n$. 

We recall that an $\hsc_2$-algebra structure on a pair of dgvs $(G,A)$ is given by the following data:
\begin{itemize}
\item[-] $G$ is a differential graded Gerstenhaber algebra.
\item[-] $A$ is a differential graded associative algebra.
\item[-] These two dgvs are related by  a central morphism of differential graded associative algebras $f:G\rightarrow A$.
\end{itemize}

\section{Deformation complex of $\sc_1$-algebras and its variants}

The operad $\SC_1$ deserves a special attention because it behaves differently than the operads $\SC_n$ for $n\geq 2$. 
The first difference relies in the fact that each space $\SC_1(J;x)$ has contractible path connected components and that $\SC_1$ is a regular operad.
The second difference relies in the fact that for $n\geq 2$ the operad $\SC_n$ is not formal as proved in \cite{Liv15}, though $\hsc_n$ is Koszul (in the quadratic-linear sense) as proved in \cite{HoeLiv13},  whereas $\SC_1$ is formal and $\hsc_1$ is Koszul quadratic.

As a consequence one can consider, as a model for $\sc_1$-algebras, either $\hsc_1$-algebras or $(\hsc_1)_\infty$-algebras. Since the $\hsc_1$-algebras are precisely affine actions, the $(\hsc_1)_\infty$-algebras are also called {\it Homotopy Affine Actions}.

In this section, we will compute the deformation complex of an $\hsc_1$-algebra, deduce the definition of an $(\hsc_1)_\infty$-algebra and compute the
deformation complex of an $(\hsc_1)_\infty$-algebra. These deformation complexes will be expressed in terms of Hochschild cohomology of an associative algebra or $\Ainf$-algebra with coefficients in a module. We end the section by considering the deformation complex of an operad under
$(\hsc_1)_\infty$.

\subsection{Notation for the operad $\hsc_1$}  Notice that because $\hsc_1$ is regular, that is, $\hsc_1=\mathcal L_{\{\cl,\op\}}(\hsc_1^{ns})$, algebras over $\hsc_1^{ns}$ in the context of 
non-symmetric  $\{\cl,\op\}$-operads coincide with algebras over $\hsc_1$ in the context of $\{\cl,\op\}$-operads.
Hence, in all the text $\hsc_1$ will denote the non-symmetric operad, that is
$$\hsc_1(\cl^{\times n};\cl)=\kfield,\; \hsc_1(\cl^{\times n};\op)=\kfield,\; \hsc_1(\cl^{\times n-1},\op;\op)=\kfield,\ \forall n\geq 1$$
and is $0$ otherwise.

\subsection{Algebras over the operad $\hsc_1$}\label{S:opsc1}
An algebra over $\hsc_1$ in the category of vector spaces is a pair $(A,V)$ subject to the following conditions:
\begin{enumerate}
\item $(A,*)$ is an associative algebra,
\item $V$ is an $A$-left module, that is, there exists an action $\rho:A\otimes V\rightarrow V$ such that $\rho(a*b,v)=\rho(a,\rho(b,v)),\forall a,b\in A,\forall v\in V$,
\item\label{R_f} there is a map $f:A\rightarrow V$ such that $f(a*b)=\rho(a,f(b)),\forall a,b\in A.$
\end{enumerate}
Equivalently the operad $\hsc_1$ is described as
$$\hsc_1=\mathcal F(\mu,\rho,f)/<R>$$
where $\mu\in\hsc_1(\cl,\cl;\cl), \rho\in\hsc_1(\cl,\op;\op)$ and $f\in\hsc_1(\cl;\op)$. The ideal $R$ is generated by the quadratic relations
$$\mu\circ_1\mu-\mu\circ_2\mu,\quad \rho\circ_1\mu-\rho\circ_2\rho,\quad  f\circ_1\mu-\rho\circ_2 f.$$

\begin{prop}\label{P:hsc1Koszul} The operad $\hsc_1$ is Koszul.
\end{prop}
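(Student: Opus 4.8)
The plan is to establish Koszulness by exhibiting an explicit Koszul dual presentation and then verifying the Koszul property combinatorially, exploiting the fact that $\hsc_1$ is a regular (non-symmetric) operad. Since $\hsc_1 = \mathcal F(\mu,\rho,f)/\langle R\rangle$ is quadratic with generators $\mu\in\hsc_1(\cl,\cl;\cl)$, $\rho\in\hsc_1(\cl,\op;\op)$, $f\in\hsc_1(\cl;\op)$ and three quadratic relations, the first step is to compute its Koszul dual operad $\hsc_1^{\text{!`}}$ (or equivalently the quadratic dual $\hsc_1^{!}$) using the theory of \cite{vanderlaan} and \cite[Section 2]{HoeLiv12} for $2$-colored quadratic operads. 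Because each arity/color component of $\hsc_1$ is one-dimensional, the relation spaces and their orthogonal complements can be described very concretely, and I expect the dual to again have one-dimensional components in the relevant arities.

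The most efficient route, given the regularity of $\hsc_1$, is to pass to the underlying \textbf{non-symmetric} operad and check Koszulness there, since for a regular operad Koszulness over $\mathcal L_{\{\cl,\op\}}$ is equivalent to Koszulness of the non-symmetric operad $\hsc_1^{ns}$. The key structural observation is that $\hsc_1^{ns}$ decomposes according to the output color: the purely closed part $\hsc_1(\cl^{\times n};\cl)=\kfield$ is exactly the associative operad $\As$, which is classically Koszul, while the open-output parts $\hsc_1(\cl^{\times n};\op)=\kfield$ and $\hsc_1(\cl^{\times n-1},\op;\op)=\kfield$ encode the module and the map $f$. So I would either invoke a distributive-law / base-change argument presenting $\hsc_1$ as built from $\As$ together with the module and affine-map data, or verify Koszulness directly.

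The cleanest way to finish is to compute the Koszul complex (bar–cobar or the quadratic-dual Koszul complex) and show it is acyclic in positive homological weight, leaving only the operad $\hsc_1$ itself in weight zero. Concretely, I would write down the Koszul differential on $\hsc_1^{\text{!`}}\circ\hsc_1$, use the one-dimensionality of all components to reduce the acyclicity check to a statement about a simplicial-type complex indexed by planar trees with two colors, and exhibit a contracting homotopy. Because every arity component is $\kfield$ (or $0$), each graded piece of the Koszul complex is a complex of finite-dimensional spaces whose Euler characteristic and boundary maps can be read off directly, and acyclicity reduces to a short combinatorial identity.

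The main obstacle I anticipate is the bookkeeping of signs and of the two-colored symmetric-group (really trivial, by regularity) actions when forming the Koszul dual and the Koszul differential: the mixed relation $f\circ_1\mu - \rho\circ_2 f$ couples the $\As$-part to the open part, so the distributive law is not simply a free product and one must check that the induced law is compatible in the sense needed for transfer of Koszulness. Once that compatibility is verified — most likely by checking the rewriting rules converge (a confluence/PBW-basis argument, exhibiting a quadratic Gröbner or rewriting basis in the sense available for regular operads) — Koszulness follows, since existence of a quadratic PBW/Gröbner basis implies Koszulness. I would therefore aim to produce such a basis explicitly and verify the single relevant ambiguity (the critical monomial of arity three mixing $\mu$, $\rho$, and $f$) resolves, which is the crux of the proof.
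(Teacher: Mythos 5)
Your proposal, after its exploratory detour through Koszul duals and distributive laws, lands on essentially the same argument as the paper: exploit regularity to work with the non-symmetric operad and prove Koszulness by a rewriting/PBW confluence argument, ordering the generators $\mu>\rho>f$ and checking that the critical monomials resolve. The only minor discrepancy is that the paper verifies \emph{three} critical pairs, $\mu\circ_1\mu\circ_1\mu$, $\rho\circ_1\mu\circ_1\mu$ and $f\circ_1\mu\circ_1\mu$ (each giving a pentagon relation), rather than the single mixed one you single out — though the first two are the classical confluences for associativity and the module relation, so your identification of the mixed pair as the crux is fair.
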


\begin{proof} The operad $\hsc_1$ is a regular colored operad, and one can adapt the rewriting rule explained in \cite[Chapter 8]{LodVal}.

The order $\mu>\rho>f$ induces a total order on planar colored trees. For instance in the weight 2 part of the operad, one has
$$\mu\circ_1\mu>\rho\circ_1\mu>f\circ_1\mu>\mu\circ_2\mu>\rho\circ_2\rho>\rho\circ_2 f$$
and the three critical pairs $\mu\circ_1\mu\circ_1\mu, \rho\circ_1\mu\circ_1\mu, f\circ_1\mu\circ_1\mu$ are all confluent (they all give rise to a pentagon relation).
\end{proof}

Assume $(A,V)$ is a dg $\hsc_1$-algebra, and let $d_A$ be the differential on $A$ and $d_V$ be the differential on $V$. Following Kontsevich  
in \cite{Kontse99}, one gets
the following definition.

\begin{defn}\label{D:endvplus} Let $V\in\dgvs$.  The Lie algebra of the group of affine transformations of $V$,  denoted by $\End^+(V)=V \rtimes \End(V)$, is a differential graded associative algebra for the following multiplication and differential:

\begin{align*}
(v,\varphi)\cdot(w,\psi)=&(\varphi(w),\varphi\psi), \\
d_{End^+(V)}(v,\varphi)=&(d_V v,d_V\varphi-(-1)^{|\varphi|}\varphi d_V).
\end{align*}
\end{defn}

\begin{prop}\label{P:dgsc1algebras} Differential graded algebras over the operad $\hsc_1$ are triples $(A,V,\alpha)$ where $A$ is a differential graded associative algebra, $V$ a dgvs
and $\alpha:A\rightarrow \End^+(V)$ is a map of differential graded associative algebras.
\end{prop}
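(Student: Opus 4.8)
The plan is to exhibit an explicit bijection between dg $\hsc_1$-algebra structures on a pair $(A,V)$ and triples $(A,V,\alpha)$ of the stated form, by assembling the generators $\mu=*$, $\rho$ and $f$ of $\hsc_1$ into the two components of a single map into $\End^+(V)$. Recall from Subsection~\ref{S:opsc1} that a dg $\hsc_1$-algebra is a pair of dgvs $(A,V)$ together with degree-$0$ structure maps $\mu,\rho,f$ satisfying the three quadratic relations; since the operad $\hsc_1$ carries the zero differential, the structure map $\hsc_1\to\End_{(A,V)}$ being a morphism of dg operads forces $\mu,\rho,f$ to be cycles in the endomorphism operad, which is exactly the statement that each of them is a chain map (in particular $\mu$ makes $A$ a dg associative algebra). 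Using the canonical splitting $\End^+(V)=V\oplus\End(V)$ of the underlying graded vector space, I would define
$$\alpha\colon A\to\End^+(V),\qquad \alpha(a)=\bigl(f(a),\,\rho(a,-)\bigr).$$

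First I would check multiplicativity. Writing $\rho_a=\rho(a,-)$ and using the product $(v,\varphi)\cdot(w,\psi)=(\varphi(w),\varphi\psi)$ of Definition~\ref{D:endvplus}, one computes
$$\alpha(a)\cdot\alpha(b)=\bigl(\rho_a(f(b)),\,\rho_a\rho_b\bigr)=\bigl(f(a*b),\,\rho_{a*b}\bigr)=\alpha(a*b),$$
where the first coordinate is precisely the relation $f(a*b)=\rho(a,f(b))$ and the second is precisely the associativity of the action $\rho_{a*b}=\rho_a\rho_b$. Thus the two mixed quadratic relations of $\hsc_1$ are encoded, respectively, by the translation part and the linear part of the semidirect product. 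Next I would verify that $\alpha$ is a chain map: the $V$-component of $d_{\End^+(V)}\alpha(a)$ is $d_V f(a)=f(d_A a)$ because $f$ is a chain map, while its $\End(V)$-component is $d_V\rho_a-(-1)^{|a|}\rho_a d_V$, which equals $\rho_{d_A a}$ precisely because $\rho$ is a chain map and the differential on $\End(V)$ is the commutator $[d_V,-]$ occurring as the second coordinate of $d_{\End^+(V)}$; hence $\alpha(d_A a)=d_{\End^+(V)}\alpha(a)$. For the inverse construction, given $(A,V,\alpha)$ I would split $\alpha(a)=(f(a),\rho_a)$, set $\rho(a,v):=\rho_a(v)$, and read the three conditions of Subsection~\ref{S:opsc1} back off from the facts that $A$ is a dg associative algebra, that $\alpha$ is multiplicative, and that $\alpha$ is a chain map; the two assignments are manifestly mutually inverse.

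The only genuinely delicate point is the sign bookkeeping in the chain-map verification, where one must match the Koszul sign $(-1)^{|a|}$ produced by $\rho$ being a chain map of degree $0$ against the sign $(-1)^{|\varphi|}$ in the formula for $d_{\End^+(V)}$, using $|\rho_a|=|a|$. Everything else is formal; the structural observation that organizes the whole argument is that the multiplication and differential of $\End^+(V)=V\rtimes\End(V)$ are tailored so that the linear part records the $A$-module structure $\rho$ and the translation part records the map $f$, turning the two mixed relations of $\hsc_1$ into the single statement that $\alpha$ is a morphism of dg associative algebras.
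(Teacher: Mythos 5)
Your proposal is correct and follows essentially the same route as the paper: both define $\alpha(a)=(f(a),\rho(a,-))$ and verify multiplicativity via the two mixed quadratic relations, the paper leaving the differential check as ``a similar computation'' which you carry out explicitly (and correctly, including the sign matching $|\rho_a|=|a|$).
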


\begin{proof}  The relations satisfied by $\rho$ and $f$ in the definition of an $\hsc_1$-algebra implies that $\alpha:A\rightarrow \End^+(V)$ defined by
$\alpha(a)=(f(a),\rho(a))$ satisfies $\alpha(ab)=(f(ab),\rho(ab))=(\rho(a)(f(b)),\rho(a)\rho(b))=(f(a),\rho(a))\cdot(f(b),\rho(b))$
A similar computation gives the result for the differential.
\end{proof}

\subsection{Coderivations} Following general operad theory (see  \cite[Chapter10]{LodVal}),  the operad $(\hsc_1)_\infty$ is defined as
$\Omega(\hsc_1^{\ac})$ and the deformation complex of an $\hsc_1$-algebra $(A,V)$ is given by
the complex of coderivations $(\Coder(\hsc_1^{\ac}(A,V)),[\delta_\varphi,-])$ where $\delta_\varphi$ is the square zero coderivation induced by the $\hsc_1$-algebra structure on $(A,V)$. Furthermore an $(\hsc_1)_\infty$-algebra structure on the pair $(A,V)$ is equivalent to a square zero coderivation $\delta$
on $\Coder(\hsc_1^{\ac}(A,V))$ and the complex of deformation of the $(\hsc_1)_\infty$-algebra  $(A,V)$ is
$(\Coder(\hsc_1^{\ac}(A,V)),[\delta,-])$.

In this section we  compute the complex of deformation of an $(\hsc_1)_\infty$-algebra, which is equivalent to the structure of a homotopy affine action.

\begin{lem} Let $(A,V)$ be a pair of graded vector spaces. The $\hsc_1^{\ac}$-coalgebra $\hsc_1^{\ac}(A,V)$ is the pair $(B,W)$ where $B=\su^{-1}\overline{T}^c(\su A)$
and $W=\overline{T}^c(\su A)\oplus T^c(\su A)\otimes V$. The structure maps are the following
$$\Delta: B\rightarrow B \otimes B, \quad \Delta_l: W\rightarrow B\otimes W,\quad h: W\rightarrow B,$$
where $\su B$ is the free coassociative coalgebra generated by $\su A$ and $\Delta'=(\su\otimes \su)\Delta \su^{-1}$ is the deconcatenation, $W$ is a cofree left $\hsc_1^{\ac}$-$B$-comodule, that is, $\Delta'_l=(\su\otimes 1_W)\Delta_l$ is the deconcatenation, and $h'=\su h$ is the projection onto the first factor of $W$.
\end{lem}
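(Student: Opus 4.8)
The plan is to compute the Koszul dual cooperad $\hsc_1^{\ac}$ explicitly and then read off the induced cofree coalgebra on the pair $(A,V)$. Since $\hsc_1$ is a regular colored operad (its presentation $\mathcal F(\mu,\rho,f)/\langle R\rangle$ arises from $\mathcal L_{\{\cl,\op\}}$ applied to the underlying non-symmetric operad), I would work throughout with $\hsc_1^{ns}$ so that all input slots are ordered and no symmetric-group coinvariants intervene; this is precisely what makes the plain tensor coalgebras $\overline{T}^c(\su A)$ and $T^c(\su A)$ appear rather than their symmetric analogues. By the quadratic Koszul duality for colored operads recalled in \cite{vanderlaan} and \cite[Section 2]{HoeLiv12}, one has $\hsc_1^{\ac}=\mathcal C(\su\{\mu,\rho,f\},\su^2R)$, the quadratic cooperad cogenerated by the suspended generators with corelations $\su^2R$.

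Next I would determine the components of $\hsc_1^{\ac}$ arity by arity. In weight $2$ there are exactly three colored types — output $\cl$ with inputs $(\cl,\cl,\cl)$, output $\op$ with inputs $(\cl,\cl,\op)$, and output $\op$ with inputs $(\cl,\cl)$ — and the three generators of $R$, namely $\mu\circ_1\mu-\mu\circ_2\mu$, $\rho\circ_1\mu-\rho\circ_2\rho$ and $f\circ_1\mu-\rho\circ_2 f$, provide exactly one relation in each type. Hence each weight-$2$ cooperad piece is one-dimensional. Koszulness (Proposition \ref{P:hsc1Koszul}), through the confluent rewriting used in its proof, then forces every component $\hsc_1^{\ac}(\cl^{\times n};\cl)$, $\hsc_1^{\ac}(\cl^{\times n};\op)$ and $\hsc_1^{\ac}(\cl^{\times n-1},\op;\op)$ to be one-dimensional, concentrated in the expected degree; one can see this by comparing Hilbert series, or by observing that the closed part of $\hsc_1^{\ac}$ is precisely the Koszul dual cooperad of $\As$, the (suspended) coassociative cooperad, and that the same rewriting shows the two open families are one-dimensional as well.

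Substituting these one-dimensional components into the cofree $\hsc_1^{\ac}$-coalgebra then reads off the underlying spaces: the closed pieces assemble into $B$ with $\su B=\overline{T}^c(\su A)$; the open $f$-family (all-closed inputs) contributes $\overline{T}^c(\su A)$, and the open $\rho$-family (one $V$-input, the rest closed) contributes $T^c(\su A)\otimes V$, the weight-one term being $V$ itself. The maps $\Delta$ and $\Delta_l$ are by construction the decomposition maps of the cooperad, which on one-dimensional planar components split a tree at a horizontal level; transported through $\su$ these become the stated deconcatenation coproducts, and $h$ is the structure map dual to the whistle $f$, that is, the projection of $W$ onto its first summand $\overline{T}^c(\su A)$.

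The main obstacle I expect is twofold. First, one must track the suspension signs so that the transported maps are deconcatenation on the nose; this is what fixes the global desuspension $\su^{-1}$ in $B=\su^{-1}\overline{T}^c(\su A)$ together with the degree conventions for $\Delta'=(\su\otimes\su)\Delta\su^{-1}$, $\Delta'_l=(\su\otimes 1_W)\Delta_l$ and $h'=\su h$. Second, one must verify that $\Delta_l$ acts uniformly on both summands of $W$ without mixing them. The latter is exactly where the mixed quadratic relation $f\circ_1\mu=\rho\circ_2 f$ enters: its dual identifies the splitting off of a closed operation from the $f$-part with the corresponding splitting in the $\rho$-part, so that a single deconcatenation formula governs all of $W$ and is compatible with $h$.
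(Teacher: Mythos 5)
Your proposal is correct and follows essentially the same route as the paper: identify $\hsc_1^{\ac}$ as the quadratic dual cooperad $\mathcal C(\su\mu,\su\rho,\su f;\su^2R)$ in the non-symmetric (regular) setting, verify its components are one-dimensional in each colored arity, assemble the cofree coalgebra to obtain $B=\su^{-1}\overline{T}^c(\su A)$ and $W=\overline{T}^c(\su A)\oplus T^c(\su A)\otimes V$, and identify $\Delta$, $\Delta_l$, $h$ as deconcatenation-type maps with the suspension signs handled as in the $\As$ case. The only difference is presentational: where the paper cites the analogy with $\As$ and external references for the dimension count and signs, you justify the one-dimensionality via the confluent rewriting system of Proposition \ref{P:hsc1Koszul}, which is a valid substitute.
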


\begin{proof} This case being similar to the case of the associative operad, we refer to  \cite[Chapter 7]{LodVal} for computational details.

From Section \ref{S:opsc1},  the cooperad $\hsc_1^{\ac}$ is $\cal C(\su \mu,\su \rho,\su f; \su ^2R)$ and satisfies:

\begin{equation}\label{D:sc1dual}
\hsc_1^{\ac}(\cl^{\times n};\cl)=\kfield\;\su ^{n-1}\mu_n,\quad \hsc_1^{\ac}(\cl^{\times m};\op)=\kfield\; \su ^{m}f_m,\quad
 \hsc_1^{\ac}(\cl^{\times p},\op;\op)=\kfield\, \su ^{p}\rho_{p+1},
 \end{equation}
 
 for  $n, m\geq 1$ and $p\geq 0$ with  $\mu_n,f_m,\rho_{p}$ of degree $0$.
As a consequence,

\begin{align*}
B=&\bigoplus_{n\geq 1} \hsc_1^{\ac}(\cl^{\times n};\cl)\otimes A^{\otimes n}=\bigoplus_{n\geq 1} \su ^{n-1} A^{\otimes n}=\su ^{-1}\overline{T}^c(\su A)\\
W=&\bigoplus_{m\geq 1} \hsc_1^{\ac}(\cl^{\times m};\op)\otimes A^{\otimes m}\oplus 
\bigoplus_{m\geq 0} \hsc_1^{\ac}(\cl^{\times m},\op;\op)\otimes A^{\otimes m}\otimes V\\
=&\bigoplus_{m\geq 1} \su ^m A^{\otimes m}\oplus 
\bigoplus_{m\geq 0}\su ^m A^{\otimes m}\otimes V=\overline{T}^c(\su A)\oplus T^c(\su A)\otimes V.
\end{align*}
The maps $\Delta,\Delta_l$ and $h$ follows similarly to the case of the operad $\As$.
We refer to  \cite[Section 3]{ALRWZ} for the signs convention.
\end{proof}

\begin{rem} In the above lemma,  $\Delta'=(\su\otimes \su)\Delta \su ^{-1},\Delta'_l=(\su \otimes 1_W)\Delta_l$ and $h'=\su h$, 
satisfy coassociative relations, that is
$$(\Delta'\otimes 1_{\su B})\Delta'=(1_{\su B}\otimes \Delta')\Delta',\quad (\Delta'\otimes 1_W)\Delta'_l=(1_{\su B}\otimes \Delta'_l)\Delta'_l,\quad
\Delta' h'=(1_{\su B}\otimes h')\Delta'_l.$$
The maps $\Delta'$ and $\Delta'_l$ are deconcatenation and we use the following notation: if $\underline a\in T^c(\su A)$ 
then
$$\Delta'(\underline a)=\sum a_{(1)}\otimes a_{(2)},$$
same notation for $\Delta'_l$.
\end{rem}

\begin{lem} Let $(A,V)$ be a pair of graded vector spaces.  There is an isomorphism
$$\delta: \Hom(\overline{T}^c(\su A),\su A)\times \Hom(\overline{T}^c(\su A)\oplus T^c(\su A)\otimes V ,V)\rightarrow \Coder(\hsc_1^{\ac}(A,V))$$
given by
$$\delta(\varphi=(\varphi_\cl,\varphi_\op))=((\delta\varphi)_\cl,(\delta\varphi)_\op)$$
with
\begin{equation}\label{E:coder}
\begin{aligned}
(\delta\varphi)_\cl(\underline a)=&\sum\pm a_{(1)}\varphi_\cl(a_{(2)})a_{(3)}\\
(\delta\varphi)_\op(\underline a+\underline b\otimes v)=&\sum\pm a_{(1)}\varphi_\cl(a_{(2)})a_{(3)}+
\sum\pm a_{(1)}\otimes\varphi_\op(a_{(2)})\\
+&\sum\pm b_{(1)}\varphi_\cl(b_{(2)})b_{(3)}\otimes v+ \sum\pm b_{(1)}\otimes\varphi_\op(b_{(2)}\otimes v),
\end{aligned}
\end{equation}
where the signs are determined by the Koszul sign rule.
The graded Lie structure on the space of coderivations induces a graded pre-Lie structure on the space 
$\Hom(\overline{T}^c(\su A),\su A))\times \Hom(\overline{T}^c(\su A)\oplus T^c(\su A)\otimes V ,V)$
given by
\begin{equation}\label{E:preLie}
(\varphi\circ \psi)_\cl=\varphi_\cl\circ \psi_\cl,\quad (\varphi\circ \psi)_\op=\varphi_\op\circ(\psi_\cl+\psi_\op).
\end{equation}
\end{lem}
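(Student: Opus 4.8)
The plan is to treat this as the two-colored instance of the standard description of coderivations on a cofree coalgebra over a cooperad (see \cite[Chapter 10]{LodVal}), adapted to the fact that $\hsc_1^{\ac}(A,V)=(B,W)$ consists of the coassociative coalgebra $B$ (whose suspension $\su B=\overline{T}^c(\su A)$ is cofree on $\su A$) together with the cofree left $B$-comodule $W$ and the whistle $h\colon W\to B$. The governing principle is that such a coderivation is freely and uniquely determined by its corestriction to the cogenerators, which here are $\su A$ in the closed color and $V$ in the open color.

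First I would make explicit what a coderivation is in this setting: a pair $(D_\cl,D_\op)$ with $D_\cl\colon B\to B$ and $D_\op\colon W\to W$ satisfying, up to Koszul signs, the compatibilities
\[
\Delta D_\cl=(D_\cl\otimes 1+1\otimes D_\cl)\Delta,\qquad
\Delta_l D_\op=(D_\cl\otimes 1_W+1_B\otimes D_\op)\Delta_l,\qquad
h D_\op=D_\cl h.
\]
Writing $\varphi_\cl$ and $\varphi_\op$ for the corestrictions $\pi_{\su A}\circ D_\cl$ and $\pi_V\circ D_\op$, the coassociativity relations recalled in the preceding remark let me reconstruct $D_\cl$ and $D_\op$ by iterating $\Delta'$, $\Delta'_l$ and $h'$; this produces exactly the formulas \eqref{E:coder}. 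Note that the second relation forces the $B$-factor of $D_\op$ to be $D_\cl$, which is precisely why $\varphi_\cl$ reappears in $(\delta\varphi)_\op$ (its first and third summands), while $\varphi_\op$ governs the remaining summands. I would then verify that the formulas \eqref{E:coder} do satisfy the three compatibilities — a direct check using the coassociativity relations and the Koszul sign rule, following the conventions of \cite[Section 3]{ALRWZ} — and that applying $\pi_{\su A}$ and $\pi_V$ recovers $(\varphi_\cl,\varphi_\op)$. Together with the freeness statement this exhibits $\delta$ as a bijection.

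For the pre-Lie structure I would use that the graded Lie bracket on $\Coder(\hsc_1^{\ac}(A,V))$ is the graded commutator coming from composition, and that the composite of two coderivations is again a coderivation. Hence corestriction transports composition to a bilinear product on the source of $\delta$, which by construction is the operadic convolution $\circ$-product of Section \ref{S:preLie}. Computing the corestriction of the composite color by color: in the closed color only insertions of the closed datum survive, giving $(\varphi\circ\psi)_\cl=\varphi_\cl\circ\psi_\cl$; in the open color the inner coderivation feeds into $\varphi_\op$ either through its closed corestriction $\psi_\cl$, when it hits a closed slot, or through its open corestriction $\psi_\op$, when it hits the $V$-slot, and summing over all insertion points yields $(\varphi\circ\psi)_\op=\varphi_\op\circ(\psi_\cl+\psi_\op)$. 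Since the operadic $\circ$-product is pre-Lie by Section \ref{S:preLie}, this product is pre-Lie and its antisymmetrization is the induced Lie bracket.

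The main obstacle I expect is the sign bookkeeping together with, in the open color, disentangling the two distinct ways the inner coderivation enters the composite: through the comodule coaction it contributes via $\psi_\cl$, while through the $V$-cogenerator it contributes via $\psi_\op$. Keeping these contributions and their Koszul signs separate is what makes the corestriction of the composite assemble precisely into $\varphi_\op\circ(\psi_\cl+\psi_\op)$ rather than into some other combination.
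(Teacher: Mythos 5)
Your overall strategy is the same as the paper's: invoke the universal property of the cofree coalgebra/comodule so that a coderivation is uniquely determined by its corestriction to the cogenerators, characterize coderivations by the three compatibility diagrams (co-Leibniz for $\Delta'$, co-Leibniz for $\Delta'_l$, and $h'$-equivariance), check that the formulas \eqref{E:coder} satisfy them, and obtain the pre-Lie product by computing, color by color, the corestriction of the composite of two coderivations --- which is exactly what the paper means by ``$\varphi\circ\psi$ is the composite of $\varphi$ with $\delta(\psi)$''.

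There is, however, one false assertion in your pre-Lie paragraph, and it sits precisely at the point that makes the structure pre-Lie rather than associative: the composite of two coderivations is \emph{not} again a coderivation. For coderivations $D_1,D_2$ one has
\[
\Delta'(D_1D_2)=\bigl(D_1D_2\otimes 1+1\otimes D_1D_2\bigr)\Delta'+\bigl(D_1\otimes D_2+(-1)^{|D_1||D_2|}D_2\otimes D_1\bigr)\Delta',
\]
and the cross terms cancel only in the graded commutator; this is why $\Coder(\hsc_1^{\ac}(A,V))$ is a Lie algebra and not an associative algebra under composition. If your claim were true, the bijection $\delta$ would transport an \emph{associative} product onto $\Hom(\overline{T}^c(\su A),\su A)\times \Hom(\overline{T}^c(\su A)\oplus T^c(\su A)\otimes V,V)$, contradicting the fact that the product \eqref{E:preLie} is only pre-Lie. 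The correct formulation --- which is what the paper uses, and what your color-by-color computation actually implements --- is to \emph{define} $\varphi\circ\psi:=\pi\circ\bigl(\delta(\varphi)\circ\delta(\psi)\bigr)=\varphi\circ\delta(\psi)$, the corestriction of the composite; this is well defined even though $\delta(\varphi)\circ\delta(\psi)$ is not a coderivation. Since $[\delta(\varphi),\delta(\psi)]$ \emph{is} a coderivation and its corestriction is $\varphi\circ\psi-(-1)^{|\varphi||\psi|}\psi\circ\varphi$, uniqueness gives $\delta\bigl(\varphi\circ\psi-(-1)^{|\varphi||\psi|}\psi\circ\varphi\bigr)=[\delta(\varphi),\delta(\psi)]$, so the Lie structure is induced by the commutator of $\circ$; the pre-Lie property of $\circ$ itself then follows from the operadic $\circ$-product argument of Section \ref{S:preLie} (equivalently, from the cross-term identity above). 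With this one correction your proof coincides with the paper's.
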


\begin{proof} We recall our notation: $B=\su^{-1}\overline{T}^c(\su A)$
and $W=\overline{T}^c(\su A)\oplus T^c(\su A)\otimes V$. Hence $\Hom(\hsc_1^{\ac}(A,V),(A,V))=\Hom(B,A)\times\Hom(W,V).$
Notice that $\Hom(B,A)=\Hom(\overline{T}^c(\su A),\su A)$.

Let $\varphi=(\varphi_\cl,\varphi_\op)\in  \Hom(B,A)\times \Hom(W,V)$, that is
$\varphi_\cl: \overline{T}^c(\su A)\rightarrow \su A$ and $\varphi_\op:\overline{T}^c(\su A)\oplus T^c(\su A)\otimes V\rightarrow V.$ 
The map $\delta\varphi$  is the unique coderivation of $\hsc_1^{\ac}$-coalgebras such that its composition  with the projection
$\hsc_1^{\ac}(A,V)\rightarrow (A,V)$ coincides with $\varphi$. Being a coderivation of $\hsc_1^{\ac}$-coalgebras amounts to the following commutative diagrams:

$$\xymatrix{\su B\ar[r]^{\Delta'}\ar[d]_{(\delta\varphi)_\cl}&\su  B\otimes  \su B\ar[d]^{(\delta\varphi)_\cl \otimes 1_{\su B}+1_{\su B}\otimes(\delta\varphi)_\cl }
&&& W\ar[r]^{\Delta'_l}\ar[d]_{(\delta\varphi)_\op}& \su B\otimes W\ar[d]^{(\delta\varphi)_\cl \otimes 1_W+1_{\su B}\otimes(\delta\varphi)_\op } &&&
W\ar[d]_{(\delta\varphi)_\op} \ar[r]^{h'}& \su B\ar[d]^{(\delta\varphi)_\cl}  \\
\su B\ar[r]^{\Delta'}& \su B\otimes \su B &&& W\ar[r]^{\Delta'_l}& \su B\otimes W && &W \ar[r]^{h'}& \su B }$$

and equations \eqref{E:coder} make the diagrams commute. The pre-Lie product $\varphi\circ \psi$ is given by the composite 
of $\varphi$ with $\delta(\psi)$, yielding  formulas \eqref{E:preLie}.
\end{proof}

\begin{nota} Let $\overline{\Hom}(T^c(\su A),\End^+(V))$ be the sub-vs of $\Hom(T^c(\su A),\End^+(V))$ of maps $f$ satisfying  $f(\kfield)\subset\End(V)$.
The graded vector space $ \Hom(\overline{T}^c(\su A)\oplus T^c(\su A)\otimes V ,V)$ is isomorphic to $\overline{\Hom}(T^c(\su A),\End^+(V))$.
Furthermore, the latter graded vector space is a  graded associative 
algebra using the convolution product, where $T^c(\su A)$ is the free conilpotent coalgebra with deconcatenation and 
$\End^+(V)$ is the algebra defined in Definition \ref{D:endvplus}.
We denote by $*$ this convolution product. More precisely
\begin{equation}\label{E:star}
\varphi*\psi=\mu(\varphi\otimes\psi)\Delta',
\end{equation}
where $\mu$ denotes the product in $\End^+(V)$.
\end{nota}

\begin{lem} Let $(A,V)$ be a pair of graded vector spaces. There is an isomorphism of graded pre-Lie algebras
$$\Hom(\overline{T}^c(\su A),\su A))\times \overline{\Hom}(T^c(\su A),\End^+(V))\simeq \Hom(\overline{T}^c(\su A),\su A))\times \Hom(\overline{T}^c(\su A)\oplus T^c(\su A)\otimes V ,V)$$
where the pre-Lie structure on the left hand side is given by
\begin{equation}\label{E:prelieproduct}
(\varphi\odot \psi)_\cl=\varphi_\cl\circ \psi_\cl,\quad (\varphi\odot \psi)_\op=\varphi_\op\circ \psi_\cl+\varphi_\op* \psi_\op.
\end{equation}
\end{lem}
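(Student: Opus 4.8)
The plan is to reduce everything to the open component. The first factor $\Hom(\overline{T}^c(\su A),\su A)$ is literally shared by source and target, and the closed components of both pre-Lie products are $\varphi_\cl\circ\psi_\cl$ in both \eqref{E:preLie} and \eqref{E:prelieproduct}; so the closed part of the claimed isomorphism is the identity and requires nothing. Hence the whole statement amounts to checking that the vector-space isomorphism $\Phi\colon \Hom(\overline{T}^c(\su A)\oplus T^c(\su A)\otimes V,V)\xrightarrow{\sim}\overline{\Hom}(T^c(\su A),\End^+(V))$ of the preceding Notation intertwines the two open products. I would first make $\Phi$ explicit: a map $g$ goes to $\widehat g$ with $\widehat g(\underline a)=(g_1(\underline a),g_2(\underline a))$, where $g_1\colon\overline{T}^c(\su A)\to V$ is the restriction of $g$ (the $V$-summand of $\End^+(V)$, extended by $0$ on $\kfield$) and $g_2\colon T^c(\su A)\to\End(V)$ is the currying of the restriction of $g$ to $T^c(\su A)\otimes V$ (the $\End(V)$-summand); the vanishing on $\kfield$ of the first coordinate is exactly the barred condition $\widehat g(\kfield)\subset\End(V)$.

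Next I would split the open product \eqref{E:preLie} as $\varphi_\op\circ(\psi_\cl+\psi_\op)=\varphi_\op\circ\psi_\cl+\varphi_\op\circ\psi_\op$ and treat the two summands separately. The summand $\varphi_\op\circ\psi_\cl$ is produced by the $\psi_\cl$-terms of \eqref{E:coder} (the first and third summands there), which insert $\psi_\cl$ only into the $T^c(\su A)$-argument and commute with the decomposition of the target into its $V$- and $\End(V)$-coordinates; it therefore transports to the identically-named term $\varphi_\op\circ\psi_\cl$ appearing in \eqref{E:prelieproduct}, with no reshuffling. The substance is the identity $\varphi_\op\circ\psi_\op=\varphi_\op*\psi_\op$. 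Using the two $\psi_\op$-terms of \eqref{E:coder}, the pre-Lie side yields, on an input $\underline a\in\overline{T}^c(\su A)$, the terms $\sum\pm(\varphi_\op)_2(a_{(1)})\big[(\psi_\op)_1(a_{(2)})\big]$, and on an input $\underline b\otimes v$ the terms $\sum\pm(\varphi_\op)_2(b_{(1)})\big[(\psi_\op)_2(b_{(2)})[v]\big]$ (note that $(\varphi_\op)_1$ never occurs, since every $\psi_\op$-insertion lands in the $T^c(\su A)\otimes V$ summand of $W$). On the other side, unwinding the convolution \eqref{E:star} with the semidirect-product multiplication of Definition \ref{D:endvplus}, $(\varphi_\op*\psi_\op)(\underline c)=\sum\pm\widehat{\varphi_\op}(c_{(1)})\cdot\widehat{\psi_\op}(c_{(2)})$ has $V$-coordinate $(\varphi_\op)_2(c_{(1)})\big[(\psi_\op)_1(c_{(2)})\big]$ and $\End(V)$-coordinate $(\varphi_\op)_2(c_{(1)})(\psi_\op)_2(c_{(2)})$; these match the two families term by term, the first with $\underline c=\underline a$ in the $V$-coordinate and the second with $\underline c=\underline b$ after evaluating the $\End(V)$-coordinate on $v$.

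The hard part will be purely the sign bookkeeping: I must verify that the Koszul signs hidden in \eqref{E:coder} agree with those generated by \eqref{E:star}, where signs enter through the graded flip in the convolution and through $\Delta'$. Conceptually the matching is forced by the shape of the product $(v,\varphi)\cdot(w,\psi)=(\varphi(w),\varphi\psi)$ of Definition \ref{D:endvplus}: feeding the left-hand endomorphism $(\varphi_\op)_2$ into the right-hand $V$-value $(\psi_\op)_1$ in one family and composing it with the right-hand endomorphism $(\psi_\op)_2$ in the other is precisely how the semidirect product distributes over the two summands of $W$, and the annihilation of $(\varphi_\op)_1$ is the algebraic counterpart of the left $V$-coordinate being discarded by this multiplication. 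Once the signs are reconciled, $\Phi$ is an isomorphism of graded pre-Lie algebras and the lemma follows.
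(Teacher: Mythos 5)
Your proposal is correct and follows essentially the same route as the paper: the paper also reduces to the open component, uses the same currying correspondence $\tilde\varphi_\op(\underline a+\underline b\otimes v)=\varphi_\op^1(\underline a)+\varphi_\op^2(\underline b)(v)$, splits the transported product into the $\psi_\cl$-insertion terms and the $\psi_\op$-insertion terms, and matches the latter with the convolution via the semidirect-product multiplication, with the same key observation that $\varphi_\op^1$ never meets a $\psi_\op$-insertion because those land in $T^c(\su A)\otimes V$. Your deferral of the Koszul signs is at the same level of explicitness as the paper's own proof, which likewise writes $\sum\pm$ throughout and invokes the Koszul sign rule.
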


\begin{proof} Let $\varphi=(\varphi_\cl,\varphi_\op)$ be an element of $\Hom(\overline{T}^c(\su A),\su A)\times \overline{\Hom}(T^c(\su A),\End^+(V))$
and let $\tilde\varphi$ be the corresponding element on the right hand side.  We write $(\varphi_\op^1,\varphi_\op^2)$ for the components
of $\varphi_\op$ on $V\oplus \End(V)$.

\begin{align*}
\tilde\varphi_\cl=&\varphi_\cl \\
\tilde\varphi_\op(\underline a+\underline b\otimes v)=&\varphi_\op^1(\underline a)+\varphi_\op^2(\underline b)(v).
\end{align*}
Let $\varphi,\psi \in \Hom(\overline{T}^c(\su A),\su A))\times \overline{\Hom}(T^c(\su A),\End^+(V))$. If
$\gamma=\varphi\odot\psi$ exists then $\tilde\gamma=\tilde\varphi\circ\tilde\psi$, that is, on the one hand

\begin{equation*}
\begin{aligned}
\tilde\gamma_\op(\underline a+\underline b\otimes v)=& 
\tilde\varphi_\op\circ(\tilde\psi_\cl+\tilde\psi_\op)(\underline a+\underline b\otimes v)\\
=&(\varphi_\op^1\circ\psi_\cl)(\underline a)+(\varphi_\op^2\circ\psi_\cl)(\underline b)(v)+
\sum\pm\tilde\varphi_\op(a_{(1)}\tilde\psi_\op(a_{(2)}))
+\sum\pm\tilde\varphi_\op(b_{(1)}\tilde\psi_\op(b_{(2)}\otimes v)) \\
=&
(\varphi_\op^1\circ\psi_\cl)(\underline a)+
\sum\pm\varphi_\op^2(a_{(1)})(\psi_\op^1(a_{(2)})) +\\
&(\varphi_\op^2\circ\psi_\cl)(\underline b)(v)+
\sum\pm\varphi_\op^2(b_{(1)})(\psi_\op^2(b_{(2)})(v))\\
=&\gamma_\op^1(\underline a)+\gamma_\op^2(\underline b)(v).
\end{aligned}
\end{equation*}
and on the other hand
\begin{multline*}
(\widetilde{\varphi_\op\circ \psi_\cl}+\widetilde{\varphi_\op * \psi_\op})(\underline a+\underline b\otimes v)=\varphi_\op^1\circ\psi_\cl(\underline a) +(\varphi_\op^2\circ\psi_\cl)(\underline b)(v)+ (\varphi_\op*\psi_\op)^1(\underline a)+(\varphi_\op*\psi_\op)^2(\underline b)(v)\\
=\varphi_\op^1\circ\psi_\cl(\underline a) +(\varphi_\op^2\circ\psi_\cl)(\underline b)(v)+ \sum\pm(\varphi_\op(a_{(1)})\cdot\psi_\op(a_{(2)}))^1+
\sum\pm(\varphi_\op(b_{(1)})\cdot\psi_\op(b_{(2)}))^2(v)\\
=\varphi_\op^1\circ\psi_\cl(\underline a)+\sum\pm \varphi_\op^2(a_{(1)})(\psi_\op^1(a_{(2)}))
+(\varphi_\op^2\circ\psi_\cl)(\underline b)(v)+
\sum\pm\varphi_\op^2(b_{(1)})(\psi_\op^2(b_{(2)})(v))
\end{multline*}
Furthermore, one has $(\varphi\odot\psi)_\cl=\varphi_\cl\circ\psi_\cl$ by definition.
\end{proof}

\begin{nota}\label{N:def} The graded pre-Lie algebra $(\Hom(\overline{T}^c(\su A),\su A)\times \overline{\Hom}(T^c(\su A),\End^+(V)),\odot)$ is denoted  $\Def(A,V)$.

\end{nota}

\subsection{Deformations of homotopy affine actions}  In the previous section we have interpreted the  pre-Lie structure on $\Coder(\hsc_1^{\ac}(A,V))$
in terms of a pre-Lie structure in $\Def(A,V)$.
If $(A,V)$ is a pair of dgvs, then $\Coder(\hsc_1^{\ac}(A,V))$ is a dg pre-Lie algebra, with the differential induced by those of $A$ and $V$ 
and so is $\Def(A,V)$. 
We denote by $\partial$ this differential. Namely $\partial(\varphi_\cl,\varphi_\op)=((\partial\varphi)_\cl,(\partial\varphi)_\op)$
with

\begin{equation*}
\begin{aligned}
(\partial\varphi)_\cl=&d_{\su A}\varphi_\cl-(-1)^{|\varphi_\cl|} \varphi_\cl\circ d_{\su A} \\
(\partial\varphi)_\op=&d_{\End^+(V)}\varphi_\op-(-1)^{|\varphi_\op|} \varphi_\op\circ d_{\su A}.
\end{aligned}
\end{equation*}

\begin{rem} Let $\dd\in\Def(A,V)$ be defined by
$\dd_\cl=d_{\su A}$ and $\dd_\op=d_V$, that is, $\dd_\op$ is the map such that $\dd_\op(1_{\kfield})=(0,d_V)$
and such that $\dd_\op((\su A)^{\otimes n})=0$, for $n\geq 1$.

Then
$$\dd\odot \dd=0 \text{ and } \partial \varphi= \dd\odot\varphi-(-1)^{|\varphi|} \varphi\odot\dd=[\dd,\varphi].$$
\end{rem}

\begin{defn}  Let $(A,V)$ be a pair of dgvs.  A {\sl Maurer Cartan element} $\varphi=(\varphi_\cl,\varphi_\op)$ in 
$\Def_{-1}(A,V)$, is an element such that $\varphi_\cl(\su A)=0$ and $\varphi_\op(\kfield)=0$,
and such that
$$\partial\varphi+\varphi\odot\varphi=(\dd+\varphi)\odot(\dd+\varphi)=0.$$
\end{defn}
As a consequence, if $\varphi$ is a Maurer Cartan element, given $\psi$ in $\Def(A,V)$, then 
\begin{equation}\label{E:fond}
\begin{aligned}
([\dd+\varphi,\psi])_\op=&(\dd+\varphi)\odot\psi-(-1)^{|\psi|}\psi\odot(\dd+\varphi) \\
=&d_{\End^+(V)}\psi_\op-(-1)^{|\psi]}\psi_\op\circ (d_{\su A}+\varphi_\cl)+\varphi_\op\circ\psi_\cl +\varphi_\op*\psi_\op-(-1)^{|\psi|}\psi_\op*\varphi_\op
\end{aligned}
\end{equation}
induces a differential on $\Def(A,V)$ denoted $\delta_{\varphi}$.

\begin{defn} Let $(A,V)$ be a pair of dgvs. An {\sl $(\hsc_1)_\infty$-algebra structure} on $(A,V)$ is given by
a Maurer Cartan element $\varphi$ in $\Def_{-1}(A,V)$. The complex $(\Def(A,V),\delta_\varphi)$ is called the {\sl deformation complex} of
the $(\hsc_1)_\infty$-algebra $(A,V;\varphi)$.
\end{defn}

\begin{prop}\label{P:maurercartan} The pair of dgvs $(A,V)$ is endowed with an $(\hsc_1)_\infty$-algebra structure if and only if $A$ is an $\Ainf$-algebra  endowed with an $\Ainf$-morphism
from $A$ to the differential graded associative algebra $\End^+(V)$.
\end{prop}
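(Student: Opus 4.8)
The plan is to split the Maurer--Cartan equation $(\dd+\varphi)\odot(\dd+\varphi)=0$ into its closed and open parts by means of the explicit formula \eqref{E:prelieproduct} for the pre-Lie product $\odot$, and to identify each part with a classical structure equation. Writing $\Phi_\cl=\dd_\cl+\varphi_\cl=d_{\su A}+\varphi_\cl$ and $\Phi_\op=\dd_\op+\varphi_\op$, the datum of $\varphi$ subject to $\varphi_\cl(\su A)=0$ and $\varphi_\op(\kfield)=0$ is the same as the datum of the pair $(\Phi_\cl,\Phi_\op)$ in which $\Phi_\cl$ restricts to $d_{\su A}$ on $\su A$ and $\Phi_\op$ takes the value $(0,d_V)$ on $\kfield$.

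First I would treat the closed part. By \eqref{E:prelieproduct} one has $\big((\dd+\varphi)\odot(\dd+\varphi)\big)_\cl=\Phi_\cl\circ\Phi_\cl$. Since the closed factor of $\Def(A,V)$ is the convolution pre-Lie algebra $\Hom(\overline{T}^c(\su A),\su A)$ controlling coderivations of the tensor coalgebra $\overline{T}^c(\su A)$, the identity $\Phi_\cl\circ\Phi_\cl=0$ says exactly that the coderivation attached to $\Phi_\cl$ is a codifferential. By the classical bar-construction dictionary (see \cite[Chapters 9--10]{LodVal}) this is equivalent to $(A,\Phi_\cl)$ being an $\Ainf$-algebra whose arity-one operation is the given differential $d_A$.

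The main point, which I expect to carry most of the weight, is the open part. By \eqref{E:prelieproduct} it reads $\Phi_\op\circ\Phi_\cl+\Phi_\op*\Phi_\op=0$, where $*$ is the convolution product \eqref{E:star} built from the deconcatenation of $T^c(\su A)$ and the multiplication $\mu$ of $\End^+(V)$, and $\Phi_\op\circ\Phi_\cl$ denotes precomposition with the coderivation attached to $\Phi_\cl$. I would expand $\Phi_\op$ into its weight components $g_n:=\Phi_\op|_{(\su A)^{\otimes n}}$, with $g_0=(0,d_V)$, and evaluate the equation on $(\su A)^{\otimes n}$. The term $\Phi_\op*\Phi_\op$ yields the sum over all splittings of the $n$ inputs into two consecutive blocks, multiplied in $\End^+(V)$; the splittings in which one block is empty reassemble, via Definition \ref{D:endvplus}, into the term $d_{\End^+(V)}\circ g_n$, while the remaining ones give the products $\mu(g_i\otimes g_j)$. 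The term $\Phi_\op\circ\Phi_\cl$ yields the sum over all ways of applying an $\Ainf$-product $m_k$ of $A$ (including $m_1=d_A$) to a block of consecutive inputs. Matching these contributions shows that the open equation is exactly the defining relation of an $\Ainf$-morphism from $(A,\Phi_\cl)$ to the dg associative algebra $\End^+(V)$. The only genuine difficulty is keeping track of the Koszul signs coming from the suspensions and from the differential of Definition \ref{D:endvplus}; these are forced and agree with the conventions of \cite{ALRWZ}.

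Finally I would assemble the equivalence. An $(\hsc_1)_\infty$-structure on $(A,V)$ produces $\Phi_\cl$, hence an $\Ainf$-algebra structure on $A$ by the closed part, together with $\Phi_\op$, hence an $\Ainf$-morphism $A\to\End^+(V)$ by the open part. Conversely, an $\Ainf$-algebra structure on $A$ determines $\varphi_\cl$ (its operations of arity $\ge 2$, the arity-one part being $d_A$) and an $\Ainf$-morphism $A\to\End^+(V)$ determines $\varphi_\op$ (its components of arity $\ge 1$, the arity-zero part being fixed to $(0,d_V)$). The constraints $\varphi_\cl(\su A)=0$ and $\varphi_\op(\kfield)=0$ make these assignments mutually inverse, and the two computations above show that the resulting $\varphi$ is a Maurer--Cartan element, proving both implications simultaneously.
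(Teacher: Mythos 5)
Your proposal is correct and follows the same overall strategy as the paper: both split the Maurer--Cartan equation into closed and open parts using \eqref{E:prelieproduct}, and both identify the closed equation $\Phi_\cl\circ\Phi_\cl=0$ with an $\Ainf$-structure on $A$ via the bar-construction dictionary. The only real divergence is in how the open equation is recognized as the $\Ainf$-morphism condition. You expand $\Phi_\op$ into weight components $g_n$ and match term by term against the classical componentwise relations for an $\Ainf$-morphism into a dg algebra; in particular your observation that the convolution cross-terms $\dd_\op*\varphi_\op\pm\varphi_\op*\dd_\op$ reassemble into $d_{\End^+(V)}\circ\varphi_\op$ is exactly right, by Definition \ref{D:endvplus}. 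The paper instead stays at the coalgebra level: it extends $\su\varphi_\op$ to the unique coalgebra morphism $F:\overline{T}^c(\su A)\to\overline{T}^c(\su \End^+(V))$, establishes the key identity $\su(\varphi_\op*\varphi_\op)=-\partial_2F$, and rewrites the open equation as $-(\partial_1+\partial_2)(F)+F\circ(\varphi_\cl+d_{\su A})=0$, i.e., the statement (projected onto cogenerators) that $F$ commutes with the codifferentials. The paper's route is slicker within its own framework, since there the $\Ainf$-morphism condition \emph{is} by definition the coalgebra-level one, so a single structural identity replaces all componentwise bookkeeping; your route is more elementary and self-contained at the level of formulas, but it defers the Koszul-sign verification and implicitly invokes the equivalence between the componentwise and coalgebra-level definitions of an $\Ainf$-morphism, which is precisely what the paper's construction of $F$ makes explicit.
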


\begin{proof} Let $\varphi$ be the Maurer Cartan element in $\Def_{-1}(A,V)$ defining the $(\hsc_1)_\infty$-algebra
structure on $(A,V)$.  Then
$\varphi=(\varphi_\cl,\varphi_\op)$ with $\varphi_\cl\in\Hom(\overline{T}^c(\su A),\su A)$ and $\varphi_\op\in\overline{\Hom}(T^c(\su A),\End^+(V))$.
Equation
$\partial\varphi +\varphi\odot\varphi=0$ and the definition \eqref{E:prelieproduct} of the pre-Lie product give
\begin{equation*}
\begin{aligned}
d_{\su A}\circ \varphi_\cl+\varphi_\cl\circ d_{\su A}+\varphi_\cl\circ\varphi_\cl=&0 \\
d_{\End^+(V)}\varphi_\op+\varphi_\op\circ d_{\su A}+\varphi_\op*\varphi_\op+\varphi_\op\circ\varphi_\cl=&0.
\end{aligned}
\end{equation*}
The first relation says that $\varphi_\cl$ is a Maurer Cartan element in $\Def_{-1}(A)$ yielding an $\Ainf$-structure on $A$.

Recall that if $(R,\mu,d_R)$ is a differential graded associative algebra, then $(T^c(sR),\partial)$ is a dg coalgebra where the coderivation is induced by
$\partial_1(\su r)=-\su d_R(r)$ and $\su^{-1}\partial_2(\su\otimes\su)=\mu$.
Let $R$ be the  differential graded associative $\End^+(V)$ and define $F:\overline{T}^c(\su A)\rightarrow \overline{T}^c(\su R)$ as the unique morphism of coalgebras extending
$\su\varphi_\op$.
In particular,  $F$ composed with the projection onto $\su R$ is $\su \varphi_\op$ and $F$ composed with the projection
onto $(\su R)^{\otimes 2}$ is $(\su\varphi_\op\otimes\su\varphi_\op)\Delta'$ where $\Delta'$ is the deconcatenation product of $\overline{T}^c(\su A)$.
Therefore, since $\varphi_\op$ has degree $-1$:
$$\su(\varphi_\op*\varphi_\op)=\su \mu(\varphi_\op\otimes\varphi_\op)\Delta'=
\partial_2(\su\otimes\su)(\varphi_\op\otimes\varphi_\op)\Delta'=-\partial_2(\su\varphi_\op\otimes\su\varphi_\op)\Delta'
=-\partial_2F.$$

Hence $(\partial\varphi+\varphi\odot\varphi)_\op=0$  writes

\begin{multline*}
\su d_{\End^+(V)}\varphi_\op+(\su \varphi_\op)\circ d_{\su A}+
\su( \varphi_\op*\varphi_\op)+(\su \varphi_\op)\circ\varphi_\cl=\\
-\partial_1(\su \varphi_\op) -\partial_2(F)+F\circ (\varphi_\cl+d_{\su A})=-(\partial_1+\partial_2)(F)+F\circ (\varphi_\cl+d_{\su A})=0.
\end{multline*}
which amounts to say that $F$ is an $\Ainf$-morphism.
\end{proof}

\begin{rem} We recover that if $(A,V)$ is an $\hsc_1$-algebra, then the Maurer-Cartan element $\varphi$ is such that
$\varphi_\cl$ vanishes except on $(\su A)^{\otimes 2}$ while
 $\varphi_\op$ vanishes except on $\su A$, hence the $\Ainf$-morphism $F$ reduces to a morphism of differential graded associative algebras.\end{rem}

\subsection{Modules over {$\Ainf$}-algebras and Hochschild cochain complex} 
In this section we refer to \cite{ALRWZ} and  \cite{GetJon90}
for the theory of representations of a $\mathcal P_\infty$-algebra and more specifically for representations of $\Ainf$-algebras.

In this section  $A$ is an $\Ainf$-algebra and $\varphi_\cl:\overline{T}^c(\su A)\rightarrow \su A$ is its structure morphism, that is, 
$(d_{\su A}+\varphi_\cl)\circ (d_{\su A}+\varphi_\cl)=0$.

\begin{defn} A dgvs $(N,d_N)$ is said to be an {\it $\Ainf$-$A$-module} (or $A$-bimodule in the terminology of Getzler and Jones) if there exists a map
$$K: T^c(\su A;\su N):=T^c(\su A) \otimes \su N\otimes T^c(\su A)\rightarrow \su N$$ of degree $-1$ such that $K(\su N)=0$, and such that 
the induced coderivation $\delta_{K+d_{sN}}$ of $\overline{T}^c(sA)$-dg comodule squares to $0$.
\end{defn}

\begin{prop}{\cite[Proposition 3.4]{GetJon90}} If $F$ is an $\Ainf$-morphism from $A_1$ to $A_2$ then $A_2$ is an $\Ainf$-$A_1$-module.
\end{prop}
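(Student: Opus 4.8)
The plan is to obtain the $A_1$-module structure on $A_2$ by \emph{restriction of scalars} along $F$ of the tautological bimodule that $A_2$ carries over itself. I work on the bar side: write $D_1,D_2$ for the square-zero codifferentials of $\overline{T}^c(\su A_1),\overline{T}^c(\su A_2)$ encoding the two $\Ainf$-structures, and recall that an $\Ainf$-morphism $F$ is precisely a morphism of conilpotent coalgebras $F\colon\overline{T}^c(\su A_1)\to\overline{T}^c(\su A_2)$ of degree $0$ with $D_2F=FD_1$. I extend $F$ to a morphism $\hat F\colon T^c(\su A_1)\to T^c(\su A_2)$ of counital coalgebras by $\hat F(\kfield)=\kfield$, so that empty blocks are allowed on either side. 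By cofreeness, giving an $\Ainf$-$A_1$-module structure on $A_2$ amounts to giving a square-zero comodule codifferential $\delta_{K+d_{\su A_2}}$ on $\mathcal M:=T^c(\su A_1)\otimes\su A_2\otimes T^c(\su A_1)$; such a codifferential is determined by its corestriction $K+d_{\su A_2}\colon\mathcal M\to\su A_2$, and the square-zero condition is equivalent to the single identity $(K+d_{\su A_2})\circ\delta_{K+d_{\su A_2}}=0$, since the corestriction of $\delta^2$ equals the corestriction of $\delta$ postcomposed with $\delta$.

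First I record the regular bimodule of $A_2$ over itself: the codifferential $D_2$ induces on $\mathcal M_2:=T^c(\su A_2)\otimes\su A_2\otimes T^c(\su A_2)$ a comodule codifferential $\delta^{(2)}$ whose corestriction $m^\bullet$ sends a word with distinguished middle letter $\su n$ to the single multiplication of the whole word, and the identity $(\delta^{(2)})^2=0$ is a direct reformulation of $D_2^2=0$. I then \emph{define} the desired corestriction by restricting along $F$ on both legs:
\[
K+d_{\su A_2}:=m^\bullet\circ(\hat F\otimes 1_{\su A_2}\otimes\hat F).
\]
Concretely $K$ deconcatenates the two $T^c(\su A_1)$-legs, applies $F$ to each consecutive block of the left and of the right leg, and feeds the resulting elements of $\su A_2$, together with $\su n$ in the middle, into the $\Ainf$-multiplication of $A_2$. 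The bare term (both legs empty) is $m_1^{A_2}(\su n)=d_{\su A_2}(\su n)$, so that separating it off as $d_{\su N}=d_{\su A_2}$ leaves $K(\su N)=0$, and $K$ has degree $-1$, exactly as the definition requires.

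The crux is the intertwining relation $\Phi\circ\delta_{K+d_{\su A_2}}=\delta^{(2)}\circ\Phi$, where $\Phi:=\hat F\otimes 1_{\su A_2}\otimes\hat F\colon\mathcal M\to\mathcal M_2$. Both codifferentials split into a left-leg term, a right-leg term, and a middle (module) term; the left- and right-leg matchings are exactly the morphism equation $\hat F D_1=D_2\hat F$, while the middle matching is the definition of $K$ together with the fact that $\hat F$ commutes with deconcatenation. Granting this, the verification closes formally:
\[
(K+d_{\su A_2})\circ\delta_{K+d_{\su A_2}}=m^\bullet\circ\Phi\circ\delta_{K+d_{\su A_2}}=m^\bullet\circ\delta^{(2)}\circ\Phi=0,
\]
the last equality holding because $m^\bullet\circ\delta^{(2)}$ is the corestriction of $(\delta^{(2)})^2=0$. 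I expect the main obstacle to be the intertwining relation itself: matching the three families of terms on $\mathcal M$ and $\mathcal M_2$ term by term forces one to track carefully the Koszul signs produced when $F$ is applied blockwise and when $D_1,D_2$ act across the distinguished letter. Once that sign bookkeeping is settled, the rest is formal.
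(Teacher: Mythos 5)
Your proposal is correct and takes essentially the same route as the paper: the structure map you construct, $K+d_{\su A_2}=m^\bullet\circ(\hat F\otimes 1_{\su A_2}\otimes\hat F)$, is precisely the paper's formula $K=\varphi_{2}(\tilde F\otimes 1_{\su A_2} \otimes \tilde F)$ of Equation \eqref{E:K}, i.e.\ restriction along $F$ of the regular $\Ainf$-bimodule structure of $A_2$ over itself. The paper delegates the square-zero verification to Getzler--Jones; your verification via the intertwining relation $\Phi\circ\delta_{K+d_{\su A_2}}=\delta^{(2)}\circ\Phi$ and cofreeness of the bicomodule is the standard argument and is sound.
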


The structure goes as follows: if $\tilde F$ denotes the dg coalgebra map $T^c(\su A_1)\rightarrow T^c(\su A_2)$ induced by $F$, 
and $\varphi_2$ denotes the $\Ainf$-structure on $A_2$, then $K$ is obtained as the composite:
\begin{equation}\label{E:K}
K=\varphi_{2}(\tilde F\otimes 1_{\su A_2} \otimes \tilde F)
\end{equation}

\begin{prop}\label{P:casparticulierK} If $F$ is an $\Ainf$-morphism from an $\Ainf$-algebra $A$ to a differential graded associative  algebra $(R,\mu,d_R)$, 
then the structure of $\Ainf$-$A$-module over $R$ takes the following form: for $\underline a, \underline b\in \overline{T}^c(\su A)$
and $r\in \su R$

\begin{equation*}
\begin{aligned}
K(\underline a\otimes r)=&\partial_2(F(\underline a),r) \\
K(m\otimes\underline b)=&\partial_2(r,F(\underline b)) \\
K(\underline a\otimes r\otimes \underline b)=&0,
\end{aligned}
\end{equation*}
where $\partial_2(\su\otimes\su)=\su\mu.$

\end{prop}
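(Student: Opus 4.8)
The plan is to specialize the structure map \eqref{E:K} to the case $A_2=R$ and to exploit that, viewed as an $\Ainf$-algebra, a differential graded associative algebra has a structure map concentrated in low arity. First I would record the shape of the $\Ainf$-structure $\varphi_R$ on $R$: as recalled above it has only two nonzero components, the arity-$1$ map $\partial_1$ with $\partial_1(\su r)=-\su d_R(r)$ and the arity-$2$ map $\partial_2$ with $\su^{-1}\partial_2(\su\otimes\su)=\mu$, all components of arity $\geq 3$ being zero. Next I would unwind $\tilde F$: being the unique coalgebra morphism corestricting to $F$, its component landing in $(\su R)^{\otimes k}$ is $F^{\otimes k}$ composed with the $k$-fold deconcatenation of $\overline{T}^c(\su A)$, so $\tilde F$ sends a nonempty tensor $\underline a$ to a sum of words of length $\geq 1$ in $\overline{T}^c(\su R)$, whose length-$1$ summand is precisely $F(\underline a)$.

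The heart of the proof is then a length count. Evaluating \eqref{E:K} on $\underline a\otimes r\otimes\underline b$ with $\underline a,\underline b\in\overline{T}^c(\su A)$ and $r\in\su R$, I apply $\tilde F\otimes 1_{\su R}\otimes\tilde F$ and concatenate, producing a sum of words in $\su R$ whose length equals the number of blocks of $\underline a$, plus one, plus the number of blocks of $\underline b$, to which $\varphi_R$ is applied. Since $\varphi_R$ annihilates every word of length $\geq 3$, only the shortest words survive. When both $\underline a$ and $\underline b$ are nonempty every such word has length $\geq 1+1+1=3$, forcing $K(\underline a\otimes r\otimes\underline b)=0$. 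When $\underline b$ is the unit and $\underline a$ is nonempty, the unique length-$2$ word is $F(\underline a)\otimes r$, arising from the trivial deconcatenation of $\underline a$, and applying $\partial_2$ yields $K(\underline a\otimes r)=\partial_2(F(\underline a),r)$; the symmetric computation gives $K(r\otimes\underline b)=\partial_2(r,F(\underline b))$. These are exactly the three asserted formulas.

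I expect the main difficulty to be bookkeeping rather than structural. On the one hand, one must track the Koszul signs generated when the degree $+1$ map $\su$ is commuted past the degree $-1$ maps $F$ and $\partial_2$; this is precisely what the normalization $\partial_2(\su\otimes\su)=\su\mu$ in the statement is designed to absorb, so that the formulas can be written directly in terms of $\partial_2$. On the other hand, one must account for the arity-$1$ term: on the pure word $r$ the recipe \eqref{E:K} would produce $\partial_1(r)=-\su d_R(r)$, but this contribution lies in the slot $K(\su N)$, which the definition of an $\Ainf$-module normalizes to zero by recording the internal differential in $d_{sN}$ rather than in $K$. Hence it does not appear among the three displayed formulas, and the computation is complete.
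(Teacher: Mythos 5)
Your proof is correct and follows essentially the same route as the paper's: both specialize the Getzler--Jones formula $K=\varphi_R(\tilde F\otimes 1_{\su R}\otimes \tilde F)$ of \eqref{E:K}, observe that for a differential graded associative algebra the structure map reduces to $\varphi_R=\partial_2$ (the internal differential being recorded in $d_{\su R}$, which is exactly why no $\partial_1$-term appears in $K$), and conclude by the length count showing $K((\su A)^{\otimes n}\otimes \su R\otimes(\su A)^{\otimes m})=0$ whenever $nm\neq 0$. The paper's proof is simply a terser version of your argument.
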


\begin{proof}Since $K=\varphi_R(\tilde F\otimes 1_{\su R}\otimes \tilde F)$ with $\tilde F$ the coalgebra morphism $T^c(\su A)\rightarrow T^c(\su R)$
induced by $F$
and since $\varphi_R=\partial_2$, then $K((\su A)^{\otimes n}\otimes \su R\otimes (\su A)^{\otimes m})=0$ if $nm\not= 0$ and
Equation \eqref{E:K} gives the result.
\end{proof}

\begin{defn} Let $(A,\varphi_\cl,d_A)$ be an $\Ainf$-algebra and $(N,K,d_N)$ be an $\Ainf$-$A$-module. The Hochschild cochain complex
of $A$ with coefficients in $N$ is
$$CH(A;N)=(\Hom(T^c(\su A),\su N),\dd_{H})$$
where $$\dd_H(\psi)=(K+d_{\su N})\circ \psi-(-1)^{|\psi|}\psi\circ(\varphi_\cl+d_A)$$

\end{defn}

\begin{rem} The Hochschild cochain complex so defined coincide (up to a regrading) with the usual Hochschild cochain complex, that is,
\begin{enumerate}
\item  If $A$ is an associative algebra and $N$ is a bimodule, then $CH(A;N)$ is the Hochschild cochain complex of $A$ with coefficients in $N$.
Indeed for $n\in\Z$,  
$$CH^n(A;N)=(\Hom_{n}(T^c(\su A),\su N)=\Hom(A^{\otimes (1-n)},N).$$
\item If $A$ is an $\Ainf$-algebra then $A$ is an $\Ainf$-$A$-module and $CH(A;A)$ is the  Hochschild cochain complex of the $\Ainf$-algebra $A$.
We denote by $\overline{CH}(A;A)=(\Hom(\overline{T}^c(\su A),\su A),\dd_{H})$ the reduced Hochschild cochain complex.
\end{enumerate}
\end{rem}

\subsection{The deformation complex of an $(\hsc_1)_\infty$-algebra}\label{S:defcomplex} 

Recall from Proposition \ref{P:maurercartan} that an $(\hsc_1)_\infty$-algebra structure on $(A,V)$ amounts to
\begin{enumerate}
\item $(A,\varphi_\cl, d_A)$ is an $\Ainf$-algebra and $\varphi_\cl:\overline{T}^c(\su A)\rightarrow \su A$ is the structure map.
\item $R=\End^+(V)$ is a differential graded associative  algebra with differential $d_{\End^+(V)}$ and product $\mu$. The associated $\Ainf$-structure map $\partial_2:\overline{T}^c(\su R)\rightarrow \su R$ is defined by  $\partial_2(\su\otimes\su)=\su\mu$.
\item There is an $\Ainf$-map from $A$ to $R$ defined by $F:T^c(\su A)\rightarrow \su R$ and we denote by $\varphi_\op=s^{-1} F$
the open part of the Maurer-Cartan element defining the $(\hsc_1)_\infty$-structure.
\end{enumerate}

Moreover  Proposition \ref{P:casparticulierK} gives the  $\Ainf$-$A$-module structure on $R$. In this context, one has

\begin{prop}  The Hochschild cochain complex $(CH(A,\End^+(V)),\dd_H)$ takes the following form 
$$\dd_H(\su \alpha)=-\su (d_{\End^+(V)}\alpha-(-1)^{|\alpha|}\alpha\circ (\varphi_\cl+d_{\su A})+\varphi_\op*\alpha-
(-1)^{|\alpha|}\alpha*\varphi_\op).$$
\end{prop}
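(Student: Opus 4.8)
The plan is to unwind the definition of $\dd_H$ in the special case $N=R=\End^+(V)$, using the explicit form of the $\Ainf$-$A$-module structure $K$ supplied by Proposition \ref{P:casparticulierK}, and to check that the four groups of terms in the claimed formula arise respectively from the internal differential on $\su R$, from the precomposition with $\varphi_\cl+d_A$, and from the two one-sided actions hidden in $K$. Throughout I would write $\psi=\su\alpha$ with $\alpha\in\Hom(T^c(\su A),\End^+(V))$, and use repeatedly that $F=\su\varphi_\op$ together with $\partial_2(\su\otimes\su)=\su\mu$.

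First I would treat the two postcomposition-independent pieces. The internal differential $d_{\su N}\circ\psi=d_{\su R}\circ\su\alpha$ equals $-\su\,d_{\End^+(V)}\alpha$, using that $\partial_1(\su r)=-\su d_R(r)$ with $d_R=d_{\End^+(V)}$; this is exactly the first group $-\su\big(d_{\End^+(V)}\alpha\big)$. For the precomposition term, since $|\psi|=|\alpha|+1$ one has $-(-1)^{|\psi|}=(-1)^{|\alpha|}$, and pulling the outer $\su$ out (it is applied after $\alpha$, so no sign is created) turns $-(-1)^{|\psi|}\psi\circ(\varphi_\cl+d_A)$ into $-\su\big(-(-1)^{|\alpha|}\alpha\circ(\varphi_\cl+d_{\su A})\big)$, which is the second group.

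The heart of the computation is the postcomposition by $K$. Extending $K$ to a coderivation of the $\overline{T}^c(\su A)$-comodule and composing with $\psi$ amounts to deconcatenating $\underline a\in T^c(\su A)$ into three factors, inserting $\psi$ in the middle one, and applying $K$ to $a_{(1)}\otimes\psi(a_{(2)})\otimes a_{(3)}$. By Proposition \ref{P:casparticulierK}, $K$ vanishes on $\su N$ alone and vanishes whenever both outer factors are nonempty, so only the two one-sided terms survive:
\begin{equation*}
K\circ\psi(\underline a)=\sum\pm\,\partial_2\big(F(a_{(1)}),\psi(a_{(2)})\big)+\sum\pm\,\partial_2\big(\psi(a_{(2)}),F(a_{(3)})\big).
\end{equation*}
Substituting $F=\su\varphi_\op$, $\psi=\su\alpha$ and $\partial_2(\su\otimes\su)=\su\mu$, and summing over the deconcatenation, the first sum becomes $\su\mu(\varphi_\op\otimes\alpha)\Delta'=\su(\varphi_\op*\alpha)$ and the second becomes $\su\mu(\alpha\otimes\varphi_\op)\Delta'=\su(\alpha*\varphi_\op)$ in the notation of \eqref{E:star}, up to the signs produced when commuting the suspensions past $\varphi_\op$ and $\alpha$. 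Collecting the four contributions and factoring out the common $-\su$ yields the asserted expression.

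The main obstacle is the sign bookkeeping. Every reduction pulls a suspension through a map of nonzero degree, and the precise relative sign $-(-1)^{|\alpha|}$ in front of $\alpha*\varphi_\op$, contrasted with the $+$ in front of $\varphi_\op*\alpha$, must be extracted from the Koszul rule applied to the deconcatenation and to $\partial_2(\su\otimes\su)$. I would verify these signs by reproducing, in this module setting, the same suspension identity already used in the proof of Proposition \ref{P:maurercartan}, where $\su(\varphi_\op*\varphi_\op)=-\partial_2 F$ was obtained from $(\su\otimes\su)(\varphi_\op\otimes\varphi_\op)=-\su\varphi_\op\otimes\su\varphi_\op$; the only new feature is that $\alpha$ need not have degree $-1$, so its parity $(-1)^{|\alpha|}$ reappears in the relative signs.
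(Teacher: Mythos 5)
Your proposal is correct and follows essentially the same route as the paper's proof: both split $\dd_H(\su\alpha)$ into the internal-differential term, the precomposition term, and the two one-sided terms of $K\circ\psi=\partial_2(F\otimes\psi)\Delta'+\partial_2(\psi\otimes F)\Delta'$ coming from Proposition \ref{P:casparticulierK}, and then extract the signs by the Koszul rule $(\su\otimes\su)(\varphi_\op\otimes\alpha)=-(\su\varphi_\op\otimes\su\alpha)$ and $(\su\otimes\su)(\alpha\otimes\varphi_\op)=(-1)^{|\alpha|}(\su\alpha\otimes\su\varphi_\op)$, exactly as in the paper's computation. The only difference is that you defer the explicit sign verification to the identity used in Proposition \ref{P:maurercartan}, whereas the paper carries it out inline; the mechanism you describe is the right one.
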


\begin{proof} Since $\End^+(V)$ is a differential graded associative algebra one has from Proposition \ref{P:casparticulierK}

\begin{equation}\label{E:KenF}
K\circ\psi=\partial_2(F\otimes\psi)\Delta'+\partial_2(\psi\otimes F)\Delta'
\end{equation}

Let us compute
\begin{multline*}
\dd_H(\su\alpha)=(K+d_{\su R})\circ \su\alpha-(-1)^{|\su\alpha|}\su\alpha\circ(\varphi_\cl+d_{\su A})=\\
-\su (d_R\alpha-(-1)^{|\alpha|}\alpha\circ (\varphi_\cl+d_{\su A}))+\partial_2(\su\varphi_\op\otimes\su\alpha)\Delta'+
\partial_2(\su\alpha\otimes \su\varphi_\op)\Delta'=\\
-\su (d_R\alpha-(-1)^{|\alpha|}\alpha\circ (\varphi_\cl+d_{\su A}))-\partial_2(\su\otimes\su)(\varphi_\op\otimes\alpha)\Delta'+
(-1)^{|\alpha|}\partial_2(\su\otimes\su)(\alpha\otimes\varphi_\op)\Delta'=\\
-\su (d_R\alpha-(-1)^{|\alpha|}\alpha\circ (\varphi_\cl+d_{\su A})+\varphi_\op*\alpha-
(-1)^{|\alpha|}\alpha*\varphi_\op).
\end{multline*}
\end{proof}

\begin{nota} Let $ (\overline{CH}(A,\End^+(V)),\dd_H)$ be the subcomplex of $(CH(A,\End^+(V)),\dd_H)$ formed by the maps $\psi:T^c(sA)\rightarrow
\su\End^+(V)$ such that $\psi(\kfield)\subset \su\End(V)$.
\end{nota}

\begin{thm}\label{T:mapcone} Let $(A,V)$ be an $(\hsc_1)_\infty$-algebra. The deformation complex $\Def(A,V)$ is the mapping cone of the map
\begin{align*}
\hat F:(\overline{CH}(A;A),\dd_H)&\rightarrow (\overline{CH}(A,\End^+(V)),\dd_H)\\
 \psi_\cl &\mapsto   F\circ\psi_\cl
\end{align*}
where $F$ is the $\Ainf$-map from $A$ to $\End^+(V)$.
\end{thm}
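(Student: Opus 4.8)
The plan is to expand the differential $\delta_\varphi=[\dd+\varphi,-]$ on $\Def(A,V)$ into its closed and open components and to recognise the resulting block structure as that of a mapping cone. I would start with the closed component. By formula \eqref{E:prelieproduct} one has $(\varphi\odot\psi)_\cl=\varphi_\cl\circ\psi_\cl$, so that
$$(\delta_\varphi\psi)_\cl=(d_{\su A}+\varphi_\cl)\circ\psi_\cl-(-1)^{|\psi|}\psi_\cl\circ(d_{\su A}+\varphi_\cl).$$
Since the $\Ainf$-$A$-module structure on $A$ is given by $K=\varphi_\cl$, the right-hand side is precisely $\dd_H(\psi_\cl)$; hence the closed component of $(\Def(A,V),\delta_\varphi)$ is, on the nose, the reduced Hochschild complex $(\overline{CH}(A;A),\dd_H)$.

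For the open component I would use equation \eqref{E:fond},
$$(\delta_\varphi\psi)_\op=d_{\End^+(V)}\psi_\op-(-1)^{|\psi|}\psi_\op\circ(d_{\su A}+\varphi_\cl)+\varphi_\op\circ\psi_\cl+\varphi_\op*\psi_\op-(-1)^{|\psi|}\psi_\op*\varphi_\op,$$
and compare it with the Proposition computing $\dd_H$ on $CH(A,\End^+(V))$. Under the identification $\psi_\op\leftrightarrow\su\psi_\op$ of the open summand $\overline{\Hom}(T^c(\su A),\End^+(V))$ of $\Def(A,V)$ with $\overline{CH}(A,\End^+(V))=\{\psi\colon T^c(\su A)\to\su\End^+(V)\mid\psi(\kfield)\subset\su\End(V)\}$, the four terms of $(\delta_\varphi\psi)_\op$ not involving $\psi_\cl$ reassemble into $\dd_H(\su\psi_\op)$, while the Maurer--Cartan constraint $\varphi_\op(\kfield)=0$ together with the defining condition of $\overline{\Hom}$ matches the subspace condition $\psi(\kfield)\subset\su\End(V)$. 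The one remaining term is $\varphi_\op\circ\psi_\cl$, and since $\varphi_\op=\su^{-1}F$ by Proposition \ref{P:maurercartan}, this equals $\hat F(\psi_\cl)=F\circ\psi_\cl$ after transport by $\su$; one checks in passing that $\hat F(\psi_\cl)(\kfield)=0$, so that $\hat F$ does land in the subcomplex $\overline{CH}(A,\End^+(V))$.

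Putting the two computations together, the differential on $\Def(A,V)=\overline{CH}(A;A)\oplus\overline{CH}(A,\End^+(V))$ takes, up to the relative suspension $\su$ on the open factor and the attendant signs, the block form
$$\delta_\varphi(\psi_\cl,\psi_\op)=\big(\dd_H\psi_\cl,\ \hat F(\psi_\cl)+\dd_H\psi_\op\big),$$
which is exactly the mapping-cone differential of $\hat F$: diagonal blocks given by the two Hochschild differentials and a single off-diagonal block $\hat F$. That $\hat F$ is a chain map, the hypothesis that makes the cone well defined, then follows for free from $\delta_\varphi^2=0$ (equivalently, from $\varphi$ being a Maurer--Cartan element): the square of an upper-triangular differential vanishes precisely when each diagonal block is itself a differential and the off-diagonal block anticommutes with them.

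The step I expect to be the main obstacle is the sign and degree bookkeeping around the suspension $\su$ relating the open summand $\overline{\Hom}(T^c(\su A),\End^+(V))$ to $\overline{CH}(A,\End^+(V))=\Hom(T^c(\su A),\su\End^+(V))$, which also accounts for the relative one-step shift between the closed and open summands characteristic of a cone. Concretely, one must verify that $\su$ carries the sign $(-1)^{|\psi|}$ of \eqref{E:fond} to the sign $(-1)^{|\alpha|}$ appearing in the formula for $\dd_H(\su\alpha)$, and that the global sign picked up in commuting $\su$ past the structure maps reproduces the chosen sign convention for the mapping-cone differential. Once this normalisation is pinned down, the identification of $\Def(A,V)$ with $\mathrm{Cone}(\hat F)$ is purely formal.
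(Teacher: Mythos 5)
Your proposal is correct and follows essentially the same route as the paper's proof: decompose $\delta_\varphi=[\dd+\varphi,-]$ into closed and open components, identify the closed part with $\dd_H$ on $\overline{CH}(A;A)$, use equation \eqref{E:fond} to match the open part with $-\su^{-1}\dd_H(\psi_\op)+\su^{-1}F\circ\psi_\cl$, and deduce that $\hat F$ is a chain map from $\delta_\varphi^2=0$. The suspension and sign bookkeeping you flag as the remaining obstacle is exactly what the paper handles by writing elements of the open summand as $\su^{-1}\psi_\op$ throughout the computation.
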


\begin{proof}Recall from Notation \ref{N:def} that $\Def(A,V)=\overline{CH}(A,A)\times \su^{-1}\overline{CH}(A,\End^+(V))$.
Recall also that the differential on $\Def(A,V)$ is given by
$$\delta_\varphi\psi=[\dd+\varphi,\psi]$$
where $\varphi=(\varphi_\cl,\varphi_\op)$
with $\varphi_\cl$  the $\Ainf$-structure on $A$ and $\su\varphi_\op$  the $\Ainf$-map from $A$ to $\End^+(V)$.
Let $\psi=(\psi_\cl,\su^{-1}\psi_\op)$ be  in $\Def(A,V)$.
It is clear that $(\delta_\varphi\psi)_\cl=d_H\psi_\cl$. Computing $(\delta_\varphi\psi)_\op$ with Relations \eqref{E:fond} gives

\begin{align*}
[\dd+\varphi,\psi]_\op=&d_{\End^+(V)}\su^{-1}\psi_\op-(-1)^{|\psi]}\su^{-1}\psi_\op\circ (d_{\su A}+\varphi_\cl)\\
&+\varphi_\op\circ\psi_\cl +\varphi_\op*\su^{-1}\psi_\op-(-1)^{|\psi|}\su^{-1}\psi_\op*\varphi_\op\\
=&-\su^{-1}\dd_H(\psi_\op)+\su^{-1}F\circ \psi_\cl.
\end{align*}
As a consequence, the differential in $\Def(A,V)$ coincides with the one of the mapping cone construction of the map $\hat F$, providing $\hat F$ is a morphism in $\dgvs$.  Since  $(A,V)$ is an $(\hsc_1)_\infty$-algebra, then  $\partial_\varphi^2=0$ and $\hat F$ is a morphism in $\dgvs$.
\end{proof}

\begin{rem} As pointed out by Kontsevich and Soibelman in \cite{KS00} in the case of $\As_\infty$-algebra, Theorem \ref{T:mapcone} states that  the deformation of an $(\hsc_1)_\infty$-algebra $(A,V)$ is controlled by the truncated complex
of the mapping cone of the map $\hat F$ from the Hochschild complex of $A$ to the Hochschild complex of $A$ with coefficients in $\End^+(V)$.
\end{rem}

\subsection{On the deformation complex of an operad under $(\hsc_1)_\infty$}\label{S:defcomplexop}

Let $\cal P$ be a non-symmetric differential graded $\{\cl,\op\}$-operad $\cal P$, together with an operad morphism
\[\eta:(\hsc_1)_{\infty}\rightarrow \cal P.\]
From \cite{LodVal}, there is a sequence of  isomorphisms in $\dgvs$
\[\Hom_{dgop}((\hsc_1)_\infty,\cal P)=  \Hom_{dgop}(\Omega(\hsc_1^{\ac}),\cal P)\simeq Tw(\hsc_1^{\ac},\cal P),\]
where a twisting cochain is a map $\alpha$ of degree $-1$ satisfying $d_{\cal P}\alpha+\alpha\circ \alpha=0$.
The description of the cooperad $\hsc_1^{\ac}$ in Equation (\ref{D:sc1dual}) implies that
$$\Def(\cal P,\eta)=\Hom(\hsc_1^{\ac},\cal P)=\prod_{n\geq 1} s^{1-n} \cal P(\cl^{\times n};\cl)
\prod_{m\geq 1} s^{-m} \cal P(\cl^{\times m};\op)\prod_{p\geq 0}s^{-p} \cal P(\cl^{\times p},\op;\op)$$
is a graded pre-Lie algebra, where the pre-Lie product is induced by the one defined in Section \ref{S:preLie}.
Indeed, with the notation of Section \ref{suspension}, one has 
$$\Def(\cal P,\eta)=\Hom(\hsc_1^{\ac},\cal P)=\prod_{n\geq 1}\Lambda_\cl \cal P(\cl^{\times n};\cl)
\prod_{m\geq 1} \Lambda_\cl \cal P(\cl^{\times m};\op)\prod_{p\geq 0} \Lambda_\cl \cal P(\cl^{\times p},\op;\op).$$

Let us write $\Def(\cal P,\eta)$ as $\Dc\times \Do$ where 

$$\Dc=\prod_{n\geq 1} \Lambda_\cl \cal P(\cl^{\times n};\cl) \text{ and } \Do=
\prod_{m\geq 1} \Lambda_\cl \cal P(\cl^{\times m};\op)\prod_{p\geq 0}\Lambda_\cl  \cal P(\cl^{\times p},\op;\op)$$

Furthermore the operad morphism $\eta: (\hsc_1)_{\infty}\rightarrow \cal P$ yields a twisting element, 
also denoted $\eta=(\eta_\cl,\eta_\op)$, in $
\Def(\cal P,\eta)_{-1}$ such that
$d_{\cal P}(\eta)+\eta\circ\eta =0$. This equality decomposes into
\begin{alignat*}{2}
d_{\cal P}(\eta_\cl)+\eta_\cl\circ\eta_\cl &=0 \\
d_{\cal P}(\eta_\op)+\eta_\op\circ\eta_\cl+\eta_\op\circ\eta_\op &=0 \\
\end{alignat*}
As a consequence, $\delta_\eta=d_{\cal P}+[\eta,-]$ defines a differential on $\Def(\cal P,\eta)$ and
$(\Def(\cal P,\eta),\delta_\eta)$ is called the deformation complex of $\cal P$. For $\psi=(\psi_\cl,\psi_\op)\in \Def(\cal P,\eta)$, one gets
\[\delta_\eta(\psi)=(d_{\cal P}\psi_\cl+[\eta_\cl,\psi_\cl], d_{\cal P}\psi_\op+[\eta,\psi_\op]+\eta_\op\circ\psi_\cl).\]

It is clear that the closed part of the deformation complex has for differential $\delta_{\eta_\cl}$ and so coincides with the Hochschild cochain complex of the operad $\mathcal P(-;\cl)$ 
under the operad $\Ainf$ (a generalization of the Hochschild cochain complex of a multiplicative operad defined by V. Turchin in \cite{Tu05})

\begin{thm}\label{T:coneoperad} The open part $\Do$ of the deformation complex of the operad $\cal P$ is endowed with the following differential: $\d_H(\psi_\op)=d_\mathcal P \psi_\op+[\eta,\psi_\op].$
As a consequence, the deformation complex of the operad $\cal P$ is the mapping cone of the map
$$\begin{array}{cccc}
F:& (\Dc,\delta_{\eta_\cl})&\rightarrow & (\su\Do,-\su \d_H\su^{-1}) \\
& \psi_\cl & \mapsto &\su \eta_\op\circ\psi_\cl
\end{array}$$
\end{thm}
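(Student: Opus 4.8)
The plan is to read off the mapping-cone structure directly from the formula for $\delta_\eta$ established in Section~\ref{S:defcomplexop}, using only that $\eta$ is a twisting cochain. Recall that because $d_{\cal P}(\eta)+\eta\circ\eta=0$, the operator $\delta_\eta=d_{\cal P}+[\eta,-]$ already squares to zero; this single identity will supply, block by block, all the relations needed for a cone. The splitting $\Def(\cal P,\eta)=\Dc\times\Do$ is preserved in a one-sided way: since an element with open output cannot be inserted in a color-respecting $\{\cl,\op\}$-operad so as to produce a closed output, the closed component of $\delta_\eta(\psi_\cl,\psi_\op)$ depends only on $\psi_\cl$, so there is no block $\Do\to\Dc$ and $\delta_\eta$ is block lower-triangular for this splitting.

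First I would prove the assertion that $\d_H=d_{\cal P}+[\eta,-]$ is a differential on $\Do$. By the triangularity just noted, $\{0\}\times\Do$ is a $\delta_\eta$-subcomplex: specializing the displayed formula for $\delta_\eta$ to $\psi_\cl=0$ gives $\delta_\eta(0,\psi_\op)=(0,\d_H\psi_\op)$. Hence $\d_H$ is the restriction of $\delta_\eta$ to this subcomplex, and $\d_H^2=(\delta_\eta^2)|_{\Do}=0$, proving the first claim. The same triangularity shows that the induced differential on the quotient $\Def(\cal P,\eta)/\Do\cong\Dc$ is $\delta_{\eta_\cl}=d_{\cal P}+[\eta_\cl,-]$, which squares to zero by the closed component $d_{\cal P}(\eta_\cl)+\eta_\cl\circ\eta_\cl=0$ of the Maurer--Cartan equation.

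It then remains to identify $\delta_\eta$ with the cone of $F$. Writing the block-triangular $\delta_\eta$ as $\left(\begin{smallmatrix}\delta_{\eta_\cl}&0\\ \eta_\op\circ(-)&\d_H\end{smallmatrix}\right)$, the vanishing of the lower-left entry of $\delta_\eta^2$ is exactly the relation $\d_H(\eta_\op\circ\psi_\cl)+\eta_\op\circ\delta_{\eta_\cl}\psi_\cl=0$, which says that $F\colon\psi_\cl\mapsto\su\,\eta_\op\circ\psi_\cl$ is a chain map $(\Dc,\delta_{\eta_\cl})\to(\su\Do,-\su\,\d_H\su^{-1})$. Finally I would run the suspension bookkeeping exactly as in the proof of Theorem~\ref{T:mapcone}: an element of $\mathrm{Cone}(F)=\Dc\oplus\su^{-1}(\su\Do)$ is sent by the cone differential to $(\delta_{\eta_\cl}\psi_\cl,\,-\su^{-1}(-\su\,\d_H\su^{-1})\su\psi_\op+\su^{-1}F\psi_\cl)=(\delta_{\eta_\cl}\psi_\cl,\,\d_H\psi_\op+\eta_\op\circ\psi_\cl)$, which is precisely $\delta_\eta$. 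I expect this last step --- reconciling the conventions $s^{1-n}$ on $\Dc$ and $s^{-m},s^{-p}$ on $\Do$ from the definition of $\Def(\cal P,\eta)$ with the cone convention and the sign $-\su\,\d_H\su^{-1}$ placed on the target --- to be the only delicate point; every other relation is a formal consequence of $\delta_\eta^2=0$.
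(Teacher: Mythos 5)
Your proposal is correct and takes essentially the same route as the paper: both read off the block structure of $\delta_\eta$ from the displayed formula and extract, from the open component of $\delta_\eta^2=0$, the identity $\d_H(\d_H\psi_\op)+\d_H(\eta_\op\circ\psi_\cl)+\eta_\op\circ(\delta_{\eta_\cl}\psi_\cl)=0$, then specialize to $\psi_\cl=0$ to get $\d_H^2=0$ and to $\psi_\op=0$ to get that $F$ is a chain map. Your block-triangular phrasing and the explicit suspension/cone-convention bookkeeping at the end are just a more detailed write-up of the same argument, which the paper leaves implicit.
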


\begin{proof} Let us prove that $d_H$ is a differential and that $F$ commutes with the differentials.
We already know that $(\delta_\eta)^2=0$ which implies that for any $(\psi_\cl,\psi_\op)$, one has
$$\d_H(\d_H(\psi_\op))+d_H(\eta_\op\circ\psi_\cl)+\eta_\op\circ(\delta_{\eta_\cl}\psi_\cl)=0.$$
Since it is true for any pair, we can choose $\psi_\cl=0$ which implies that $d_H^2=0$ or we can choose $\psi_\op=0$ which implies
that $F$ is a differential graded morphism.
\end{proof}

\begin{rem} In case $(A,V)$ is an $(\hsc_1)_\infty$-algebra, the operad of endomorphism of the $2$-colored dgvs $(A,V)$ is an operad under
$(\hsc_1)_\infty$ and we recover the deformation complex of the previous section.
\end{rem}

\section{An operad acting on the deformation complex}

In this section we build the $\{\cl,\op\}$-colored operad $\rBr$ called the relative brace operad and prove that it acts on the deformation complex of an operad $\mathcal P$ under $(\hsc_1)_\infty$. Before going through the description by planar trees, let us explain algebras over it.

An algebra $(A,R)$ over $\rBr$
is a dgvs where $A$ is a $B_\infty$-algebra (in the sense of Baues in \cite{Baues81}) of a certain kind (it is a Brace algebra in the sense of
Dolgushev-Willwacher in \cite{DolWil15}), $R$ is a differential graded algebra and $A$ acts on $R$.
More precisely the pair $(\mathbf BA=T^c(\su A),\mathbf BR=T^c(\su R))$ is a differential graded $\hsc_1$-algebra in the category of free conilpotent coassociative coalgebras,
with some extra properties. Namely an $\rBr$-algebra $(A,R)$ amounts to the following data:

$\bullet$ $(\mathbf BA,M,\delta)$ is a differential graded bialgebra, where the multiplication $M$, is uniquely determined by degree $0$ maps
$M_{p,q}: (\su A)^{\otimes p}\otimes (\su A)^{\otimes q}\rightarrow \su A$; we ask that $M_{p,q}=0$ for $p\geq 2$, that is, the family of maps restricts to the family
$M_{1,q}:\su A\otimes   (\su A)^{\otimes q}\rightarrow \su A$ and  $M_{1,0}=M_{0,1}=Id$ and $M_{0,q}=0, q\not=1$.

$\bullet$ $(\mathbf BR,\partial)$ is a differential graded coalgebra such that $\partial=d_{\su R}+\partial_2$, that is, $R$ is a differential graded associative algebra.

$\bullet$ $\mathbf BR$ is a right differential graded $\mathbf BA$-module in the category of dg-coalgebras with structure map $K=\mathbf BR\otimes\mathbf BA\rightarrow \mathbf BR$. This structure is uniquely determined by degree $0$ maps
$K_{p,q}:  (\su R)^{\otimes p}\otimes (\su A)^{\otimes q}\rightarrow \su R$; we ask that $K_{p,q}=0$ for $p\not\in\{0,1\}$, that is, the family of maps restricts to the families

$$\begin{array}{ccccccc}
K_{1,q}:& \su R&\otimes &  (\su A)^{\otimes q}&\rightarrow&  \su R, & \text{ with } K_{1,0}=Id \\
K_{0,q}: &\kfield&\otimes& (\su A)^{\otimes q}&\rightarrow& \su R, &\text{ with } K_{0,0}=0.
\end{array}$$

The action of the algebra $\mathbf BA$ reads, $\forall  r\in \mathbf BR,\forall a_1,a_2\in \mathbf BA$
\begin{equation*}
\begin{aligned}
K(r,M(a_1,a_2))=&K(K(r,a_1),a_2)
\qquad \partial K=&K(\partial\otimes Id+Id\otimes \delta)&
\end{aligned}
\end{equation*}

$\bullet$ Notice that the family $(K_{0,q})_{q\geq 1}$ gives rise to a map of dgcoalgebras $K_0:\mathbf BA\rightarrow \mathbf BR$, that is, $K_0$ defines an $\Ainf$-map from $A$ to $R$, satisfying in addition
$$K_0(M(a_1,a_2))=K(K_0(a_1),a_2),\forall a_1,a_2\in \mathbf BA.$$

$\bullet$ In the sequel we will make use of the maps 
$G_n=\su^{-1} K_{0,n}: (\su A)^{\otimes n}  \rightarrow  R$ of degree $-1$ and
$\Gamma_{1,q}= R\otimes (\su A)^{\otimes q} \rightarrow R$ of degree $0$
 defined by
$$\Gamma_{1,q}(r\otimes\underline a)=\su^{-1} K_{1,q}(\su r\otimes\underline a).$$

In order to avoid signs in the construction we will describe the operad $\Lambda_\cl\rBr$ so that 
an $\rBr$-structure on the pair of dgvs $(A,R)$ is equivalent to a $\Lambda_\cl\rBr$-structure on the pair of dgvs $(\su A,R)$.

\subsection{Definition of the operad of relative braces $\rBr$}\label{sec: RBr def}

\newcommand{\brect}[1]{\draw [black,fill=black] ($#1+(-0.1,-0.1)$) rectangle ($#1+(.1,.1)$)}
\newcommand{\wrect}[1]{\draw [black,fill=white] ($#1+(-0.1,-0.1)$) rectangle ($#1+(.1,.1)$)}
\newcommand{\wrond}[1]{\draw [black,fill=white] #1 circle (0.1);}
\newcommand{\brond}[1]{\draw [black,fill=black] #1 circle (0.1);}

The $\{\cl,\op\}$-colored operad $\Lambda_\cl\rBr$ is described as the free $\Z$-module generated by a set $\mathcal T$ of planar rooted trees and words of planar rooted trees. 
Except for the root edge, every edge of the trees considered has two adjacent vertices (there is no half-edges).
Trees are represented with the root at the bottom. Reading a tree from top to bottom, a vertex has ingoing edges, called {\sl inputs} of the vertex and one outgoing edge 
called {\sl the output} of the vertex. The set $In(v)$ of ingoing edges of a vertex $v$ is totally ordered, by reading the tree from left to right.
The {\sl leaves} of a tree are its extremal vertices,
in particular leaves have no ingoing edges.

  Reading a planar tree $T$ from bottom to top and left to right gives a total order on the vertices of the tree $T$, called the {\sl canonical order} of the 
vertices. Assume $v_1<\ldots <v_n$ is the canonical order in the tree $T$. Later on we will define degrees of vertices. 
The sub-$\Z$-module of the free graded commutative $\Z$-algebra $S(v_1,\ldots,v_n)$ 
generated by polynomials containing each $v_i$ exactly once is free of rank 1 and denoted $\det(T)$; $\epsilon_T=v_1\wedge\ldots\wedge v_n\in\det(T)$ 
is our choice of generator. By abuse of notation, we will often write $\epsilon_T$ as $w_1\wedge\ldots \wedge w_s$ where $w_1<\ldots<w_s$ are the vertices of degree $\not=0$.
Similarly if $v_i$ has degrree $0$, $\epsilon_{T\setminus\{v_i\}}$ corresponds to $v_1\wedge\ldots \wedge\hat{v_i}\wedge\ldots\wedge v_n$.

\subsubsection{On vertices}\label{S: vertex}
There are different kind of vertices:
the round-shaped vertices  model operations having a closed output, whereas the square-shaped vertices model operations having  an open output.
Moreover:

\def\labelitemi{-}
\begin{itemize}
\item The neutral round-shaped vertex \begin{tikzpicture}
 \brond{(0,0)};
 \end{tikzpicture}\
may have $n\geq 2$ inputs and model the operations $\delta_n:(\su A)^{\otimes n}\rightarrow \su A$. It has degree $-1$.
\item The  round-shaped vertex  \begin{tikzpicture}
 \wrond{(1,0)};
 \end{tikzpicture}\ is called a closed vertex and is labeled. It has degree $0$. A corolla of $n+1$ closed vertices labeled by $\{1,\ldots,n+1\}$
 model the operations $M_{1,n}:\su A\otimes (\su A)^{\otimes n}\rightarrow \su A$.
 \item The neutral square-shaped vertex \begin{tikzpicture}
 \brect{(0,0)};
\end{tikzpicture}\ may have $n\geq 1$ inputs and model the operations $G_n:(\su A)^{\otimes n}\rightarrow R$. It has degree $-1$.
\item  The square-shaped vertex  \begin{tikzpicture}
 \wrect{(2,0)};
 \end{tikzpicture}\ is called an open vertex and is labeled. It has  degree $0$.
 The corolla having  $n$  closed leaves and an open root 
 model the operation $\Gamma_{1,n}:R\otimes (\su A)^{\otimes n}\rightarrow R$.
  \end{itemize}

\subsubsection{The closed part of the sets $\mathcal T$} Let $I=(s_1,\ldots,s_n)$ be a sequence in $\{\cl,\op\}$ and $x\in\{\cl,\op\}$. The $\Z$-module 
$\Lambda_\cl\rBr(I;x)$ is freely generated by the set $\mathcal T(I;x)$.

For $x=\cl$, if  there exists $i$ such that $s_i=\op$ then this set is empty. Otherwise $\mathcal T(\cl^{\times n};\cl)$
is the set of  planar trees with round-shaped vertices, having $n$  closed vertices  labeled by $\{1,\ldots,n\}$ and we denote by $\beta_T:
\{\text{closed\ vertices}\}\rightarrow \{1,\ldots,n\}$ the bijection defining the labeling.
By definition 
\[
\Lambda_\cl\rBr(\cl^{\times n};\cl)=\bigoplus_{T\in\mathcal T(\cl^{\times n};\cl)} \Z T\otimes \det(T), 
\]
is a graded $\Z$-module, the grading being given by $\det(T)$.

\subsubsection{The open part of the sets $\mathcal T$}
As in \cite{Q-SCMRL} we associate to the sequence $I=(s_1,\ldots,s_n)$ the labels $(l_1,\ldots,l_n)$ defined by
$l_i=i$ if $s_i=\cl$ and $l_i=\underline i$ if $s_i=\op$. We denote by $L_\cl$ the set $\{l_i|s_i=\cl\}$ and by $L_\op$ the set $\{l_i|s_i=\op\}$.

An element in $\mathcal T(I;\op)$ is a word of planar rooted trees $W=T_1\cdots T_r$,
where the trees $T_j$ have a square-shaped root, either neutral or open and labeled. There is a bijection between the set of open roots and $L_\op$
and the open roots are labeled according to this bijection.
The other vertices are round-shaped vertices. There is a bijection between the set of closed vertices and $L_\cl$
and the closed vertices are labeled accordingly. Let $\beta_W:\{\text{closed\ vertices}\}\sqcup \{\text{open\ vertices}\}
\rightarrow L_\cl\sqcup L_\op$ denote the bijection. By definition $\det(W)=\det(T_1)\otimes\ldots\otimes\det(T_r)$ and
\[
\Lambda_\cl\rBr(I;\op)=\bigoplus_{W\in\mathcal T(I;\op)} \Z W\otimes \det(W), 
\]
is a graded $\Z$-module, the grading being given by $\det(W)$. By abuse of notation we say that a word of trees $W$ has degree $-p$ if it has $p$ neutral vertices.

\subsubsection{Action of the symmetric group} Given $\sigma\in\Sigma_n$, let us describe the action map
$\cal T(s_1,\ldots,s_n;x)\rightarrow \cal T(s_{\sigma(1)},\ldots,s_{\sigma(n)};x)$. Let $L^\sigma$ denote the labels associated to the sequence
$(s_{\sigma(1)},\ldots,s_{\sigma(n)})$. Hence
$\sigma$ determines a bijection
$$\begin{array}{cccc}
\rho_\sigma: &L_\cl\sqcup L_\op&\rightarrow& L^\sigma_\cl\sqcup L^\sigma_\op\\
&i\in L_\cl&\mapsto  & \sigma^{-1}(i) \in L^\sigma_\cl \\
&\underline i \in L_\op & \mapsto & \underline{ \sigma^{-1}(i)} \in L^\sigma_\op
\end{array}$$
For $W\in \cal T(s_1,\ldots,s_n;x)$ the element $W\cdot\sigma\in  \cal T(s_{\sigma(1)},\ldots,s_{\sigma(n)};x)$ is the same tree or word of trees
than $W$ with the labeling: $\beta_{W\cdot\sigma}=\rho_\sigma\beta_W$.

\newcommand{\arbremultsq}[4]{
  \draw [-] #1 -- ($#1 +(0,-.5)$);
 \draw [-] ($#1 +(-2,1)$) -- #1 -- ($#1 +(-1,1)$) ;
 \draw [-] ($#1 +(2,1)$) -- #1 -- ($#1 +(1,1)$) ;
  \draw [black,fill=white] ($#1 +(-2,1)$) circle (0.1)  ;
  \draw [black,fill=white] ($#1 +(-1,1)$) circle (0.1)  ;
  \draw [black,fill=white] ($#1 +(1,1)$) circle (0.1)  ;
  \draw [black,fill=white] ($#1 +(2,1)$) circle (0.1)  ;
  #3{#1};
  \draw ($#1 +(-2,1)$) node [above] {$2$};
  \draw ($#1 +(-1,1)$) node [above] {$3$};
  \draw ($#1 +(1,1)$) node [above] {$k$};
  \draw ($#1 +(2,1)$) node [above] {$k+1$};
  \draw ($#1 +(0,0.6)$) node [above] {$\dots$};
  \draw ($#1 -(0.9,0.4)$) node {#2};
    \draw ($#1 +(0.1,-0.3)$) node [right] {#4};
}

\newcommand{\arbremultsqbis}[4]{
  \draw [-] #1 -- ($#1 +(0,-.5)$);
 \draw [-] ($#1 +(-2,1)$) -- #1 -- ($#1 +(-1,1)$) ;
 \draw [-] ($#1 +(2,1)$) -- #1 -- ($#1 +(1,1)$) ;
  \draw [black,fill=white] ($#1 +(-2,1)$) circle (0.1)  ;
  \draw [black,fill=white] ($#1 +(-1,1)$) circle (0.1)  ;
  \draw [black,fill=white] ($#1 +(1,1)$) circle (0.1)  ;
  \draw [black,fill=white] ($#1 +(2,1)$) circle (0.1)  ;
  #3{#1};
  \draw ($#1 +(-2,1)$) node [above] {$1$};
  \draw ($#1 +(-1,1)$) node [above] {$2$};
  \draw ($#1 +(1,1)$) node [above] {$k-1$};
  \draw ($#1 +(2,1)$) node [above] {$k$};
  \draw ($#1 +(0,0.6)$) node [above] {$\dots$};
  \draw ($#1 -(0.9,0.4)$) node {#2};
    \draw ($#1 +(0.1,-0.3)$) node [right] {#4};
}

\newcommand{\arbrecsq}[3]{
 \draw [-] #1+(-0.4,0.9) -- ($#1+(-0.4,0.5)$);
  \draw [-] #1+(0.4,0.9) -- ($#1+(0.4,0.5)$);
 \wrect{($#1+(-0.4,1)$)};
 \wrect{($#1+(0.4,1)$)};
  \draw ($#1+(-0.4,1)$) node [above] {$#2$};
  \draw ($#1+(0.4,1)$) node [above] {$#3$};
}

\begin{figure}
 \begin{center}
\begin{tikzpicture}[>=stealth, arrow/.style={->,shorten >=3pt}, point/.style={coordinate}, pointille/.style={draw=red, top color=white, bottom color=red},scale=0.8]
 \coordinate (A) at (-5,0);
 \coordinate (B) at (0.3,0);
 \coordinate (C) at (0.3,-4);
 \coordinate (D) at (-5,-4);
 \coordinate (E) at (5,0);
 \coordinate (F) at (5,-4);
 \arbremultsqbis{(A)}{$\partial_k$}{\brond}{};
 \arbremultsqbis{(B)}{$G_k$}{\brect}{};
  \draw [-] (-5,3) -- (-5,2.5);
  \draw [black,fill=white] (-5,3) circle (0.1)  ;
  \draw (-5.7,2.3) node [above] {$id_c$};
  \draw (-5,3.1) node [above] {$1$};
  \draw [-] (5,3) -- (5,2.5);
  \draw [black,fill=white] (4.9,2.9) rectangle (5.1,3.1)  ;
  \draw (4.2,2.3) node [above] {$id_\op$};
  \draw (5,3.1) node [above] {$\un{1}$};
 \arbremultsq{(C)}{$\Gamma_{1,k}$}{\wrect}{$\un{1}$};
\arbremultsq{(D)}{$M_{1,k}$}{\wrond}{$1$};
  \arbrecsq{(E)}{\underline{1}}{\underline{2}};
  \draw (4.1,-0.2) node [above] {$\mu_{\op}$};
   \draw [black,-] (-7.4,-5) rectangle (-2.3,4) ;
  \draw [black,-] (-2.1,-5) rectangle (6.4,4) ;
  \draw [black,-] (3.6,-1) rectangle (6.3,3.9) ;
  \draw (-5,-6) node [above] {Closed part};
  \draw (2.5,-6) node [above] {Non-closed part};
  \draw (5,-2) node [above] {Open part};
  \draw [-] (-5,-5.3) -- (-5,-5);
  \draw [-] (2.5,-5.3) -- (2.5,-5);
  \draw [-] (5,-1.3) -- (5,-1);
\end{tikzpicture}
\end{center}\caption{Generators of $\rBr$.}\label{fig: generators}
\end{figure}

\subsubsection{The operad structure} We follow closely the definition of the minimal operad $M$ of Kontsevich and Soibelman in \cite{KS00}, except for the signs.
To a tree $T$ in $\mathcal T(I;x)$ we associate the ordered set of its angles  $A(T)$. To a word $W=T_1\cdots T_r$ of rooted trees the ordered set of its angles $A(W)$
is the ordered set $A(T_1)\cup\ldots\cup A(T_r)$ where for $x\in A(T_i)$ and $y\in A(T_j),$ one has $i<j\implies x<y$.

Let $W\in\cal T(s_1,\ldots,s_n;x)$ and $X\in\cal T(J;s_i)$; let $v$ be the vertex in $W$ labeled by $l_i$;  the operadic composition in $\rBr$ takes the following form
\[
(W\otimes \epsilon_W)\circ_v (X\otimes \epsilon_X)=\sum_{f:In(v)\rightarrow A(X)} (W\circ_v^f X)\otimes \epsilon_{W\setminus\{v\}}\wedge \epsilon_X,
\]
where $f$ runs over  nondecreasing maps from $In(v)\rightarrow A(X)$. The word  $W\circ_v^f X$ is obtained from $W$ and $X$ by substituting $X$ for the vertex $v\in W$
and by attaching the inputs of $v$ to the vertices of $X$ according to the function $f$.

Notice that if $X=U_1\cdots U_q$ is a word of trees, then necessarily the vertex $v$ of $W$ is a square-shaped vertex, thus  a root. Writing 
 $W=T_1\ldots T_r$ where $v$ is the root of $T_j$ for some $j$,  we can then replace the root $v$ by the word $U_1\cdots U_q$.

Below we give  examples of composition. In the first example we don't precise
the signs of the trees since there is only one neutral vertex. In the second example the vertices $e$ and $f$ are the only ones of degree $-1$.

\newcommand{\arbrel}[3]{
 \draw [-] ($#1+(-1,1)$) -- #1 -- ($#1+(1,1)$);
 \draw [-] #1 -- ($#1-(0,.5)$);
 \brect{#1};  
 \draw [black,fill=white] ($#1+(-1,1)$) circle (0.1);
 \draw [black,fill=white] ($#1+(1,1)$) circle (0.1);
  \draw ($#1+(-1,1)$) node [right] {$#2$};
  \draw ($#1+(1,1)$) node [above] {$#3$};
}
\newcommand{\arbrer}[3]{
 \draw [-] ($#1+(-1,1)$) -- #1 -- ($#1+(1,1)$);
 \draw [-] #1 -- ($#1-(0,.5)$);
 \brect{#1};  
 \draw [black,fill=white] ($#1+(-1,1)$) circle (0.1);
 \draw [black,fill=white] ($#1+(1,1)$) circle (0.1);
  \draw ($#1+(-1,1)$) node [above] {$#2$};
  \draw ($#1+(1,1)$) node [left] {$#3$};
}
\newcommand{\arbrec}[3]{
 \draw [-] ($#1+(-1,1)$) -- #1 -- ($#1+(1,1)$);
 \draw [-] #1 -- ($#1-(0,.5)$);
 \brect{#1};  
 \draw [black,fill=white] ($#1+(-1,1)$) circle (0.1);
 \draw [black,fill=white] ($#1+(1,1)$) circle (0.1);
  \draw ($#1+(-1,1)$) node [above] {$#2$};
  \draw ($#1+(1,1)$) node [above] {$#3$};
}
\newcommand{\segm}[2]{ 
  \draw [-] #1 -- ($#1+(0,1)$);
  \draw [black,fill=white] ($#1+(0,1)$) circle (0.1);
  \draw ($#1+(0,1)$) node [above] {$#2$};
  }

 \begin{center}
\begin{tikzpicture}[scale=0.7]
 \coordinate (A) at (-1,0);
  \draw [-] (-1,-.5) -- (A) -- (-1,1);
  \draw [black,fill=white] (-1.1,-.1) rectangle (-.9,.1) ; 
  \draw [black,fill=white] (-1,1) circle (0.1)  ;
  \draw (-1,1.1) node [above] {$2$};
  \draw (-1,0) node [left] {$\un{1}$};
\draw (0,0.5) node [left] {$\circ$};
\draw (-.25,0.3) node [right] {$\un{1}$};
  \arbrec{(1.5,0)}{1}{2};
  \draw (3.5,0.5) node [left] {$=$};
  \segm{(5,0)}{1};
  \arbrec{(5,0)}{3}{2};
  \draw (7,0.5) node [left] {$+$};
  \segm{(7,1)}{3};
  \arbrel{(8,0)}{1}{2};
  \draw (10,0.5) node [left] {$+$};
 \segm{(11,0)}{3};
  \arbrec{(11,0)}{1}{2};
  \draw (13,0.5) node [left] {$+$};
 \segm{(15,1)}{3};
  \arbrer{(14,0)}{1}{2};
  \draw (16,0.5) node [left] {$+$};
  \segm{(17,0)}{2};
  \arbrec{(17,0)}{1}{3};
\end{tikzpicture}
\end{center}
%


  \newcommand{\petitsq}[1]{
 \draw [-] ($#1+(0,-0.5)$) -- #1;
  \draw [black,fill=white] ($#1+(0.1,-0.1)$) rectangle ($#1+(-0.1,0.1)$) ; 
   \draw  #1node [left] {$\un{1}$};
  }

\newcommand{\ligne}[4]{
 \draw [-] ($#1+(0,-0.5)$) -- #1 -- ($#1+(0,1)$);
  \draw [black,fill=#2] ($#1+(0.1,-0.1)$) rectangle ($#1+(-0.1,0.1)$) ; 
   \draw [black,fill=white] ($#1+(0,1)$)  circle (0.1)   ;
    \draw  ($#1+(0,1.1)$) node [above] {$#3$};
  \draw  #1 node [left] {$#4$};}
  
  \newcommand{\ligneg}[4]{
 \draw [-] ($#1+(0,-0.5)$) -- #1 -- ($#1+(0,1)$);
  \draw [black,fill=#2] ($#1+(0.1,-0.1)$) rectangle ($#1+(-0.1,0.1)$) ; 
   \draw [black,fill=white] ($#1+(0,1)$)  circle (0.1)   ;
    \draw  ($#1+(0,1.1)$) node [left] {$#3$};
  \draw  #1 node [left] {$#4$};}
  
\newcommand{\arbreV}[3]{
 \draw [-] ($#1+(-0.5,1)$) -- #1 -- ($#1+(0.5,1)$);
 \draw [-] #1 -- ($#1-(0,.5)$);
 \brect{#1};  
 \draw [black,fill=white] ($#1+(-0.5,1)$) circle (0.1);
 \draw [black,fill=white] ($#1+(0.5,1)$) circle (0.1);
   \draw #1 node [left] {$f$};
  \draw ($#1+(-0.5,1.1)$) node [above] {$#2$};
  \draw ($#1+(0.5,1.1)$) node [above] {$#3$};
}


 \begin{center}

\begin{tikzpicture}[baseline=+2ex,scale=0.65]
  \ligne{(-2,0)}{white}{2}{\un{1}}
\ligne{(-1,0)}{black}{3}{e}
\draw (0,0.5) node [left] {$\circ$};
\draw (-.25,0.3) node [right] {$\un{1}$};
 \petitsq{(1,0)}
 \ligne{(2,0)}{black}{2}{f}
  \draw (3.5,0.5) node [left] {$=$};
  \end{tikzpicture}
 \Bigg(
  \begin{tikzpicture}[baseline=+2ex,scale=0.65]
  \ligne{(4,0)}{white}{3}{\un{1}}
\ligne{(5,0)}{black}{2}{f}
\ligne{(6,0)}{black}{4}{e}
  \draw (7,0.5) node [left] {$+$};
  \petitsq{(7.5,0)}
  \arbreV{(8.5,0)}{3}{2};
  \ligne{(9.5,0)}{black}{4}{e}
   \draw (10.5,0.5) node [left] {$+$};
 \petitsq{(11.2,0)}
 \ligneg{(12,0)}{black}{2}{f}
 \ligne{(13,0)}{black}{4}{e}
  \segm{(12,1.1)}{3};
   \draw (14,0.5) node [left] {$+$};
 \petitsq{(14.5,0)}
   \arbreV{(15.5,0)}{2}{3};
 \ligne{(16.5,0)}{black}{4}{e}
 \end{tikzpicture} \Bigg)
\begin{tikzpicture}[baseline=+2ex,scale=0.7]
  \draw (19,0.5) node [left] {$\otimes\ e\wedge f$};
\end{tikzpicture}

\end{center}
%

The fact that these compositions endow $\Lambda_\cl\rBr$ with a $\{\cl,\op\}$-colored operad structure is a consequence of the computation in \cite{KS00}.

\subsubsection{The differential} In addition {$\Lambda_\cl\rBr$}  is a differential graded operad and we describe the differential in this section.
Let $W\in\cal T(I;x)$ and let $\epsilon_W=v_1\wedge\ldots\wedge v_n$ where $\{v_1,\ldots,v_n\}$ is the set of  vertices of $W$.
The differential is described as follows
\[
d_{\Lambda_\cl\rBr}(W\otimes\epsilon_W)=\sum_{i=1}^k (-1)^{\sum_{j=1}^{i-1}|v_j|}d_{v_i}(W)\otimes (v_1\wedge \ldots v_{i-1}\wedge v_i^1\wedge v_i^2\wedge\cdots\wedge v_n),
\]
where $d_{v_i}(W)$ is a sum of word of trees depending on the nature of the vertex $v_i$ and $v_i^1$, $v_i^2$ are new vertices obtained from a blow-up of $v_i$.
We have four type of vertices to consider.
For the neutral round-shaped  vertices one has

\newcommand{\arbrepartial}[2]{
\draw ($#1 +(0.05,.85)$) node {\tiny$\cdots$};
\draw ($#1-(0,0.2)$) node [left] {\tiny$v$};
 \draw [-] ($#1+(-1,1)$) -- #1 -- ($#1+(1,1)$);
 \draw [-] ($#1+(-.5,1)$) -- #1 -- ($#1+(.5,1)$);
 \draw [-] #1 -- ($#1-(0,.5)$);
#2{#1};
\draw ($#1-(0,.5)$) node [below] {\tiny$T_0$};
\draw ($#1+(-1,1)$) node [above] {\tiny$T_1$};
  \draw ($#1+(1,1)$) node [above] {\tiny$T_k$};
}

\newcommand{\arbrepartialsq}[2]{
\draw ($#1 +(0.05,.85)$) node {\tiny$\cdots$};
\draw ($#1-(0,0.2)$) node [left] {\tiny$v$};
 \draw [-] ($#1+(-1,1)$) -- #1 -- ($#1+(1,1)$);
 \draw [-] ($#1+(-.5,1)$) -- #1 -- ($#1+(.5,1)$);
 \draw [-] #1 -- ($#1-(0,.5)$);
#2{#1};
\draw ($#1+(-1,1)$) node [above] {\tiny$T_1$};
  \draw ($#1+(1,1)$) node [above] {\tiny$T_k$};
}

\newcommand{\arbrepartialsqlibre}[5]{
\draw ($#1 +(0.05,.85)$) node {\tiny$\cdots$};
\draw ($#1-(0,0.2)$) node [left] {\tiny #5};
 \draw [-] ($#1+(-1,1)$) -- #1 -- ($#1+(1,1)$);
 \draw [-] ($#1+(-.5,1)$) -- #1 -- ($#1+(.5,1)$);
 \draw [-] #1 -- ($#1-(0,.5)$);
#2{#1};
\draw ($#1+(-1,1)$) node [above] {\tiny$T_{#3}$};
  \draw ($#1+(1,1)$) node [above] {\tiny$T_{#4}$};
}

\newcommand{\arbreblow}[3]{
\draw ($#1-(0,0.2)$) node [left] {\tiny$v^1$};
\draw ($#1 +(-0.7,.85)$) node {\tiny$\cdots$};
\draw ($#1 +(1.45,.85)$) node {\tiny$\cdots$};
 \draw [-] ($#1+(-1.5,1)$) -- #1 -- ($#1+(2.2,1)$);
 \draw [-] ($#1+(-.2,1)$) -- #1 -- ($#1+(.5,1)$);
 \draw [-] #1 -- ($#1-(0,.5)$);
\coordinate (A) at ($#1+(.5,1)$);
\draw ($(A)-(0,0.2)$) node [right] {\tiny$v^2$};
\draw [-] ($(A)+(-1,1)$) -- (A) -- ($(A)+(1,1)$);
 \draw [-] ($(A)+(-.5,1)$) -- (A) -- ($(A)+(.5,1)$);
 \draw ($(A) +(0.05,.85)$) node {\tiny$\cdots$};
 \draw ($#1-(0,.5)$) node [below] {\tiny$T_0$};
 \draw ($#1+(-1.5,1)$) node [above] {\tiny$T_1$};
 \draw ($#1+(-.2,.9)$) node [above] {\tiny$T_{i-1}$};
  \draw ($#1+(2.2,1)$) node [above] {\tiny$T_k$};
  \draw ($(A)+(-1,1)$) node [above] {\tiny$T_{i}$};
  \draw ($(A)+(1,1)$) node [above] {\tiny$T_{i+j}$};
  #2{#1};
  #3{(A)};
}

\newcommand{\arbreblowcyc}[3]{
\draw ($#1-(0,0.2)$) node [left] {\tiny$v^1$};
\draw ($#1 +(-0.7,.85)$) node {\tiny$\cdots$};
\draw ($#1 +(1.45,.85)$) node {\tiny$\cdots$};
 \draw [-] ($#1+(-1.5,1)$) -- #1 -- ($#1+(2.2,1)$);
 \draw [-] ($#1+(-.2,1)$) -- #1 -- ($#1+(.5,1)$);
 \draw [-] #1 -- ($#1-(0,.5)$);
\coordinate (A) at ($#1+(.5,1)$);
\draw ($(A)-(0,0.2)$) node [right] {\tiny$v^2$};
\draw [-] ($(A)+(-1,1)$) -- (A) -- ($(A)+(1,1)$);
 \draw [-] ($(A)+(-.5,1)$) -- (A) -- ($(A)+(.5,1)$);
 \draw ($(A) +(0.05,.85)$) node {\tiny$\cdots$};
 \draw ($#1-(0,.5)$) node [below] {\tiny$T_0$};
 \draw ($#1+(-1.5,1)$) node [above] {\tiny$T_1$};
 \draw ($#1+(-.2,.9)$) node [above] {\tiny$T_{i}$};
  \draw ($#1+(2.2,1)$) node [above] {\tiny$T_k$};
  \draw ($(A)+(-1,1)$) node [above] {\tiny$T_{i+1}$};
  \draw ($(A)+(1,1)$) node [above] {\tiny$T_{j-1}$};
  #2{#1};
  #3{(A)};
}

\newcommand{\arbreblowsq}[3]{
\draw ($#1-(0,0.2)$) node [left] {\tiny$v^1$};
\draw ($#1 +(-0.7,.85)$) node {\tiny$\cdots$};
\draw ($#1 +(1.45,.85)$) node {\tiny$\cdots$};
 \draw [-] ($#1+(-1.5,1)$) -- #1 -- ($#1+(2.2,1)$);
 \draw [-] ($#1+(-.2,1)$) -- #1 -- ($#1+(.5,1)$);
 \draw [-] #1 -- ($#1-(0,.5)$);
\coordinate (A) at ($#1+(.5,1)$);
\draw ($(A)-(0,0.2)$) node [right] {\tiny$v^2$};
\draw [-] ($(A)+(-1,1)$) -- (A) -- ($(A)+(1,1)$);
 \draw [-] ($(A)+(-.5,1)$) -- (A) -- ($(A)+(.5,1)$);
 \draw ($(A) +(0.05,.85)$) node {\tiny$\cdots$};
 \draw ($#1+(-1.5,1)$) node [above] {\tiny$T_1$};
 \draw ($#1+(-.2,.9)$) node [above] {\tiny$T_{i-1}$};
  \draw ($#1+(2.2,1)$) node [above] {\tiny$T_k$};
  \draw ($(A)+(-1,1)$) node [above] {\tiny$T_{i}$};
  \draw ($(A)+(1,1)$) node [above] {\tiny$T_{i+j}$};
  #2{#1};
  #3{(A)};
}

\newcommand{\arbrebloww}[3]{
\coordinate (A) at ($#1+(-1.5,1)$);
\coordinate (B) at ($#1+(1.5,1)$);
\draw ($(A) +(0.05,.85)$) node {\tiny$\cdots$};
\draw ($(A)-(0,0.2)$) node [left] {\tiny$v^1$};
 \draw [-] ($(A)+(-1,1)$) -- (A) -- ($(A)+(1,1)$);
 \draw [-] ($(A)+(-.5,1)$) -- (A) -- ($(A)+(.5,1)$);
 \draw ($(B) +(0.05,.85)$) node {\tiny$\cdots$};
\draw ($(B)-(0,0.2)$) node [right] {\tiny$v^2$};
 \draw [-] ($(B)+(-1,1)$) -- (B) -- ($(B)+(1,1)$);
 \draw [-] ($(B)+(-.5,1)$) -- (B) -- ($(B)+(.5,1)$);
 \draw [-] (A) -- #1 -- (B);
 \draw [-] #1 -- ($#1- (0,.5)$);
 \draw ($#1-(0,.5)$) node [below] {\tiny$T_0$};
\draw ($(A)+(-1,1)$) node [above] {\tiny$T_1$};
  \draw ($(A)+(1,1)$) node [above] {\tiny$T_i$};
  \draw ($(B)+(-1,1)$) node [above] {\tiny$T_{i+1}$};
  \draw ($(B)+(1,1)$) node [above] {\tiny$T_k$};
  #2{(A)};
#3{(B)};
\brond{#1};
}
\newcommand{\arbreblowu}[3]{
\draw ($#1-(0,0.2)$) node [left] {\tiny$v^1$};
\draw ($#1 +(-0.7,.85)$) node {\tiny$\cdots$};
\draw ($#1 +(1.45,.85)$) node {\tiny$\cdots$};
 \draw [-] ($#1+(-1.5,1)$) -- #1 -- ($#1+(2.2,1)$);
 \draw [-] ($#1+(-.4,1)$) -- #1 -- ($#1+(.1,1)$);
 \draw [-] ($#1+(.9,1)$) -- #1 -- ($#1-(0,.5)$);
\coordinate (A) at ($#1+(.1,1)$);
 \draw ($(A)$) node [above] {\tiny$v^2$};
 \draw ($#1-(0,.5)$) node [below] {\tiny$T_0$};
 \draw ($#1+(-1.5,1)$) node [above] {\tiny$T_1$};
 \draw ($#1+(-.4,.9)$) node [above] {\tiny$T_i$};
 \draw ($#1+(.9,.9)$) node [above] {\tiny$T_{i+1}$};
  \draw ($#1+(2.2,1)$) node [above] {\tiny$T_k$};
  #2{#1};
  #3{(A)};
}

\begin{equation}\label{deltaroundneutral}
d_v\Bigg(
\begin{tikzpicture}[baseline=-0.65ex,scale=.8]
 \arbrepartial{(0,0)}{\brond};
 \end{tikzpicture}
 \Bigg)
=-\sum_{\substack{1\leq i<i+j\leq k\\ j\leq k-2}}
\begin{tikzpicture}[baseline=-0.65ex,scale=.8]
 \arbreblow{(5,0)}{\brond}{\brond}; 
\end{tikzpicture} 
\end{equation}

For the closed vertices one has

\begin{multline}\label{deltaclosed}
d_v\Bigg(
\begin{tikzpicture}[baseline=-0.65ex,scale=0.7]
 \arbrepartial{(0,0)}{\wrond};
 \end{tikzpicture}
 \Bigg)
= -\sum_{\substack{1\leq i+1,\\ j-1\leq k,\\ i<j}}
\begin{tikzpicture}[baseline=-0.65ex,scale=0.7]
 \arbreblowcyc{(5,0)}{\brond}{\wrond};
\end{tikzpicture}
+\sum_{{1\leq i<i+j\leq k}}
\begin{tikzpicture}[baseline=-0.65ex,scale=0.7]
 \arbreblow{(5,0)}{\wrond}{\brond};
\end{tikzpicture}.
\end{multline}

For the open vertices one has

\begin{multline}\label{deltaopen}
d_v\Bigg(
\begin{tikzpicture}[baseline=-0.65ex,scale=0.7]
 \arbrepartialsq{(0,0)}{\wrect};
 \end{tikzpicture}
 \Bigg)
=+\sum_{1\leq i<i+j\leq k}
\begin{tikzpicture}[baseline=-0.65ex,scale=0.7]
 \arbreblowsq{(5,0)}{\wrect}{\brond};
\end{tikzpicture}\\
+
\sum_{0\leq  i< k}
\begin{tikzpicture}[baseline=-0.65ex,scale=0.7]
 \arbrepartialsqlibre{(5,0)}{\wrect}{1}{i}{$v^1$};
  \arbrepartialsqlibre{(8,0)}{\brect}{i+1}{k}{$v^2$};
\end{tikzpicture}
-\sum_{0< i\leq k}
\begin{tikzpicture}[baseline=-0.65ex,scale=0.7]
 \arbrepartialsqlibre{(5,0)}{\brect}{1}{i}{$v^1$};
  \arbrepartialsqlibre{(8,0)}{\wrect}{i+1}{k}{$v^2$};
\end{tikzpicture}
\end{multline}

and 
\begin{multline}\label{deltaneutralsquare}
d_v\Bigg(
\begin{tikzpicture}[baseline=-0.65ex,scale=0.7]
 \arbrepartialsq{(0,0)}{\brect};
 \end{tikzpicture}
 \Bigg)
=-\sum_{1\leq i<i+j\leq k}
\begin{tikzpicture}[baseline=-0.65ex,scale=0.65]
 \arbreblowsq{(5,0)}{\brect}{\brond};
\end{tikzpicture}
-\sum_{0< i<k}
\begin{tikzpicture}[baseline=-0.65ex,scale=0.65]
 \arbrepartialsqlibre{(5,0)}{\brect}{1}{i}{$v^1$};
  \arbrepartialsqlibre{(8,0)}{\brect}{i+1}{k}{$v^2$};
\end{tikzpicture}
\end{multline}

We refer to Appendix \ref{A:A} for details on the generating operations, their differential and compositions.

For later use, let us introduce the following notation for these blow-ups. 
Let $v$ be a vertex of a tree $T$. 
For $v$ a neutral round-shaped vertex, one writes 
\begin{equation}\label{eq: notation bu nrs} 
d_v(T)= -\sum_{\substack{1\leq i<i+j\leq k\\ j\leq k-2}}  d_{v}^{[i,i+j]}(T), \tag{\ref{deltaroundneutral}b}
\end{equation}
for the blow-up \eqref{deltaroundneutral}.
For $v$ a closed vertex, one writes 
\begin{equation}\label{eq: notation bu cl}
d_v(T)=  -\sum_{\substack{i,j\in\{0,\ldots,k\},\\ j<j+1<\ldots<k<0<1<\ldots< i}} d_{v}^{[j,i]}(T)+ \sum_{{1\leq i<i+j\leq k}}d_{v}^{[i,i+j]}(T), \tag{\ref{deltaclosed}b}
\end{equation}
for the blow-up \eqref{deltaclosed},  where the interval $[j,i]$ is thought of as an interval of a cyclic ordered set, as in Definition \ref{def: strict interval}.
For $v$ an open vertex, one writes 
\begin{equation}\label{eq: notation bu op}
d_v(T)= \sum_{1\leq i<i+j\leq k} d_{v}^{[i,i+j]}(T)
+ \sum_{0\leq  i< k} d_{v}^{[i+1,\text{right}]}(T)  -\sum_{0< i\leq k} d_{v}^{[\text{left},i]}(T), \tag{\ref{deltaopen}b}
\end{equation}
for the blow-up \eqref{deltaopen}. 
For $v$ a neutral square-shaped  vertex, one writes 
\begin{equation}\label{eq: notation bu opcl}
d_v(T)= -\sum_{1\leq i<i+j\leq k} d_{v}^{[i,i+j]}(T)
 -\sum_{1\leq  i< k} d_{v}^{[\lft,i]}(T), \tag{\ref{deltaneutralsquare}b}
\end{equation}
for the blow-up \eqref{deltaneutralsquare}.

\begin{Convention}\label{conv:rbr} The two operads $\Lambda_\cl\rBr$ and $\rBr$  only differ by gradings and signs in the composition and differential. By abuse of notation, the generators $\partial_k$, $G_k$, $M_{1,k}$ and $\Gamma_{1,k}$ will be also considered as generators in $\rBr$ of degree $k-2$,  $k-1$, $k$ and $k$ respectively.
\end{Convention}

\subsection{The deformation complex as an $\rBr$-algebra}

\begin{thm}\label{T:action} Let $\eta: (\hsc_1)_\infty\rightarrow \mathcal P$ be an operad morphism.  
The operad $\rBr$ acts on the deformation complex $\Def(\cal P,\eta)$. 
\end{thm}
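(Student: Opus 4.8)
The plan is to construct a morphism of $\{\cl,\op\}$-colored differential graded operads $\Phi\colon \rBr\to \End_{(\Dc,\Do)}$ to the endomorphism operad of the pair $(\Dc,\Do)=\Def(\cal P,\eta)$; by the suspension equivalence of Section \ref{suspension} this is the same datum as a $\Lambda_\cl\rBr$-action on the correspondingly suspended pair, which is the form in which the trees of \S\ref{sec: RBr def} are presented. Since $\Lambda_\cl\rBr(I;x)$ is free as a graded $\Z$-module on the trees (resp.\ words of trees) $\mathcal T(I;x)$, with operadic composition given by the explicit grafting sum $\circ_v^f$, it suffices to prescribe $\Phi$ on the generating corollas $\partial_k$, $M_{1,k}$, $G_k$, $\Gamma_{1,k}$ of Figure \ref{fig: generators} and extend by composition; there are then exactly two things to verify, compatibility with $\circ_v^f$ and compatibility with the differentials.

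First I would assign the labelled (degree $0$) generators. The white round corolla $M_{1,k}$ acts as the $k$-fold operadic brace on $\Dc$, assembled from the partial compositions $\circ_i$ of $\cal P$ on the closed colour exactly as in Turchin \cite{Tu05}; the white square corolla $\Gamma_{1,k}$ acts as the corresponding module brace, its single open input and open output furnished by the compositions landing in $\cal P(\cl^{\times p},\op;\op)$. The neutral (degree $-1$) generators encode the twisting element $\eta=(\eta_\cl,\eta_\op)$: the black round vertex $\partial_k$ inserts $\eta_\cl$ and the black square vertex $G_k$ inserts $\eta_\op$, via the $\circ$-product of Section \ref{S:preLie}. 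On the closed colour this is precisely the recipe by which the minimal operad $\mathcal M$ of \cite{KS00} acts on the Hochschild complex of an operad under $\Ainf$.

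Compatibility of $\Phi$ with $\circ_v^f$ then splits by colour. The closed-on-closed case is the Kontsevich--Soibelman/Turchin computation already subsumed in the identification of $\mathcal M$ as the closed part of $\rBr$. The mixed and open cases reduce to the associativity of the compositions $\cal P(\cl^{\times p},\op;\op)\circ \cal P(\cl^{\times q};\cl)$ together with the module-action identity $K(r,M(a_1,a_2))=K(K(r,a_1),a_2)$ and the $\Ainf$-morphism identity $K_0(M(a_1,a_2))=K(K_0(a_1),a_2)$ built into an $\rBr$-algebra; the grafting sum over nondecreasing maps $f\colon In(v)\to A(X)$ is exactly the bookkeeping of the Leibniz expansion of these braces. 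These routine but lengthy checks are collected in Appendix \ref{A:A}.

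The heart of the proof, and the main obstacle, is that $\Phi$ is a chain map, i.e.\ $\Phi(d_{\rBr}T)=[\delta_\eta,\Phi(T)]$ for the differential $\delta_\eta=d_{\cal P}+[\eta,-]$ on $\Def(\cal P,\eta)$. The blow-up $v\mapsto v^1\wedge v^2$ of a vertex records two effects: the internal differential $d_{\cal P}$ of the operation sitting at $v$, and the Leibniz action of $[\eta,-]$, which by the pre-Lie formula of Section \ref{S:preLie} expands $\eta\circ(-)\pm(-)\circ\eta$ as a sum over all graftings of a single new neutral vertex onto $v$. I would verify, vertex type by vertex type, that the summation ranges and signs of \eqref{deltaroundneutral}--\eqref{deltaneutralsquare} reproduce term by term this decomposition, according to the position (below, above, or to one side) of the inserted $\eta$. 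The genuinely delicate point is the open vertex \eqref{deltaopen}: the two mixed closed/open terms there encode the interaction transmitted by the whistle, i.e.\ by $\eta_\op$, and have no counterpart in \cite{KS00}. Matching these, while tracking the signs produced by the Koszul rule together with the effect $\epsilon_W\mapsto \epsilon_{W\setminus\{v\}}\wedge v^1\wedge v^2$ on the $\det$ factors, is where essentially all of the work lies.
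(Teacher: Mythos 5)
Your construction agrees with the paper's in substance: there too the white corollas act by the Gerstenhaber--Voronov braces ($M_{1,n}$ and $\Gamma_{1,q}$ become $x\{y_1,\ldots,y_n\}$ on $\Dc$ and $\Do$ respectively), and the black ones carry the twisting element ($G_n$ acts by $y_1\otimes\cdots\otimes y_n\mapsto \eta_\op\{y_1,\ldots,y_n\}$, while the neutral round corolla acts by $\eta_\cl\{y_1,\ldots,y_k\}$, the weight-$k$ component of the coderivation $a\mapsto d_{\cal P}a+\eta_\cl * a-(-1)^{|a|}a*\eta_\cl$), so both arguments produce the same action from the same inputs: the brace relations of \cite{GerVor95}, the Kontsevich--Soibelman computation \cite{KS00} on the closed colour, and the twisting equation $d_{\cal P}\eta+\eta\circ\eta=0$. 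The difference is organizational, and it matters for rigor. The paper never builds the morphism $\rBr\to\End_{\Def(\cal P,\eta)}$ tree by tree; it instead verifies the finitely many identities in the characterization of $\rBr$-algebras stated at the opening of Section 3 --- $(T^c(\Dc),M,\delta_\cl)$ is a dg bialgebra, $(\Do,\circ,d_H)$ is a dg associative algebra, and the coalgebra map $K$ assembled from $\Gamma_{1,q}$ and $G_n$ satisfies $K(a,M(b,c))=K(K(a,b),c)$ and commutes with the differentials --- so that all of your vertex-by-vertex matching of the blow-ups \eqref{deltaroundneutral}--\eqref{deltaneutralsquare} against the Leibniz expansion of $[\eta,-]$ (including the open-vertex case \eqref{deltaopen} you rightly single out, which is exactly the compatibility of $K$ with the differentials computed in Appendix \ref{A:A}) is compressed into a handful of coalgebra-level statements.

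The one genuine soft spot in your plan is the definition step. Freeness of $\Lambda_\cl\rBr(I;x)$ as a graded $\Z$-module on trees does not license ``prescribe $\Phi$ on the generating corollas and extend by composition'': $\rBr$ is generated by its corollas but not \emph{freely} generated, since a composition of corollas is a \emph{sum} of trees (the sum over nondecreasing maps $f\colon In(v)\to A(X)$), and relations such as the brace/pre-Jacobi identities hold among the $M_{1,k}$. So the extension is not automatically well defined. To repair your route you must either define $\Phi(T)$ on every tree directly --- for instance via the canonical decomposition obtained by grafting subtrees at leaf vertices, where $In(v)=\emptyset$ makes the composition $\circ_v$ single-termed --- and only then prove compatibility with arbitrary compositions $\circ_v^f$, or else exhibit a presentation of $\rBr$ by generators and relations, which the paper does not supply. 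The bar-construction packaging is precisely how the paper sidesteps this issue; with the definition step repaired, your direct Kontsevich--Soibelman-style verification goes through and yields the same action.
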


\begin{proof} The deformation complex $\Def(\cal P,\eta)$ writes as $\Dc\times \Do$ with 

$$\Dc=\prod_{n\geq 1} \Lambda_\cl \cal P(\cl^{\times n};\cl) \text{ and }\Do=
\prod_{m\geq 1} \Lambda_\cl \cal P(\cl^{\times m};\op)\prod_{p\geq 0}\Lambda_\cl  \cal P(\cl^{\times p},\op;\op)$$

We denote by $\eta:(\hsc_1)_\infty\rightarrow \cal P$ the operad map as well as the twisting element in $\Def(\cal P,\eta)$.
We recall that $\Dc$ is endowed with the differential $\delta_{\eta_\cl}=d_{\cal P}+[\eta_\cl,-]$

Indeed we will describe an action of $\Lambda_\cl\rBr$ on $\Def(\cal P,\eta)$.

\medskip

\noindent {\it First step: the closed part}

Since $\Lambda_\cl \cal P(-;\cl)$ is an operad we know from Gerstenhaber and Voronov in \cite{GerVor95} that $\Dc$ is endowed with brace operations
\begin{align*}
M_{1,n}:\Dc\otimes \Dc^{\otimes n} & \rightarrow \Dc\\
x\otimes y_1\otimes\ldots\otimes y_n &\mapsto  x\{y_1,\ldots,y_n\}
\end{align*}
defining an associative product which is a morphism of coalgebras
\begin{align*}
M:T^c(\Dc)\otimes T^c(\Dc)&\rightarrow T^c(\Dc)\\
 a\otimes b & \mapsto  a* b
\end{align*}
Notice that 
$$M_{1,1}(a,b)=a\{b\}=a\circ b.$$

The differential of the operad $\Lambda_c\cal P$ can be extended as a coderivation of the coalgebra  $T^c(\Dc)$, denoted by $d_{\cal P}$.
We claim that the map
$$\delta_\cl:T^c(\Dc)\rightarrow T^c(\Dc)$$
defined by
$$\delta_\cl(a)=d_{\cal P} a+\eta_\cl * a-(-1)^{|a|} a*\eta_\cl$$
is a coderivation of cofree coalgebras, satisfies $\delta_\cl^2=0$ so that $(T^c(\Dc),M,\delta_\cl)$ is a differential graded bialgebra.

The ingredients to prove that claim are the following. If $a,b\in\Dc$ then $a*b=a\circ b+a\otimes b+(-1)^{|a||b|} b\otimes a$. Since $\eta_\cl\in\Dc$ is of degree $-1$ the relation 
$d_{\cal P}\eta_\cl+\eta_\cl\circ\eta_\cl=0$ writes $d_{\cal P}\eta_\cl+\eta_\cl*\eta_\cl=0$. This proves that $\delta_\cl^2=0$.
Furthermore $\eta_\cl$ is a primitive element in $T^c(\Dc)$ which implies that $\delta_\cl$ is a coderivation.
Finally the associativity of the product $M$ implies that $\delta$ is a derivation with respect to $M$.

Notice that is $a\in\Dc$ then $\delta_\cl(a)=d_{\cal P} a+\eta_\cl\circ a-(-1)^{|a|} a\circ\eta_\cl=\delta_{\eta_\cl}(a)$.

\medskip

\noindent{\it Second step: the open part}

The definition of the product $\circ$ given in Section \ref{S:preLie} for a colored operad implies that given two elements
$a,b$ in $\Do$ the product $a\circ b$ is indeed associative, because it consists in grafting $b$ into the only, if exists, open part of $a$.

In Theorem \ref{T:coneoperad} we have proved that $\Do$ is endowed with the differential
$$d_H(a)=d_\mathcal Pa-(-1)^{|a|} a\circ\eta_\cl + \eta_\op\circ a-(-1)^{|a|} a\circ \eta_\op.$$
A short computation shows that $d_H(a\circ b)=d_H(a)\circ b+(-1)^{|a|} a\circ d_H(b)$ making $\Do$ into a dg associative algebra.

We denote by $\delta_\op$ the induced square zero coderivation on $T^c(\su \Do)$.

The degree $0$ maps

\begin{align*}
K_{1,q}: \su \Do\otimes   (\Dc)^{\otimes q}&\rightarrow  \su \Do,  \\
K_{0,q}:  (\Dc)^{\otimes q}&\rightarrow \su \Do, 
\end{align*}

are obtained from the degree $0$ maps
\begin{align*}
\Gamma_{1,q}:  \Do\otimes   (\Dc)^{\otimes q}&\rightarrow   \Do \\
 x \otimes y_1\otimes\ldots\otimes y_q &\mapsto  x\{y_1,\ldots,y_q\}
\end{align*}
which are Gerstenhaber-Voronov brace operations
and the degree $-1$ maps
\begin{align*}
G_n:   (\Dc)^{\otimes n}&\rightarrow   \Do \\
 y_1\otimes\ldots\otimes y_n &\mapsto  \eta_\op\{y_1,\ldots,y_n\}
\end{align*}
That the induced morphism of coalgebras
$$K:T^c(\su \Do)\otimes T^c(\Dc)\rightarrow T^c(\su\Do)$$
satisfies
$$K(a,M(b,c))=K(K(a,b),c))$$
is a consequence of the relations of the brace operations.
The fact that $K$ commutes with the differentials is a computation similar to the one described in Appendix \ref{A:A}.
\end{proof}

The following corolla is a direct consequence of the computation of the deformation complex of an $(\hsc_1)_\infty$-algebra in Section
\ref{S:defcomplex} together with the fact that $\rBr$ is an $\SC_2$-operad (section 5) so that $H_*(\rBr)=\hsc_2$. 

\begin{cor} Let $(A,V)$ be an $\hsc_1$-algebra. The pair $(CH^*(A,A),CH^*(A,\End(V)^+)$ is an $\rBr$-algebra. At the level of homology this action gives the following $\hsc_2$-structure on $(HH^*(A,A),HH^*(A,\End(V)^+)$:
\begin{enumerate}
\item $HH^*(A,A)$ has its usual Gerstenhaber structure.
\item $HH^*(A,\End(V)^+)$ inherits the associative structure of $\End(V)^+$
\item $\varphi_*: HH^*(A,A)\rightarrow HH^*(A,\End(V)^+)$ is the central morphism of algebras induced by the morphism $\varphi:A\rightarrow \End(V)^+$.
\end{enumerate}
\end{cor}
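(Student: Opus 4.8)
The plan is to deduce this from the general action Theorem~\ref{T:action} together with the explicit computation of the deformation complex in Section~\ref{S:defcomplex}. Since an $\hsc_1$-algebra $(A,V)$ is in particular an $(\hsc_1)_\infty$-algebra, the Remark following Theorem~\ref{T:coneoperad} shows that the endomorphism operad $\End_{(A,V)}$ is an operad under $(\hsc_1)_\infty$ whose deformation complex is exactly $\Def(A,V)$; hence Theorem~\ref{T:action} equips $\Def(A,V)$ with an $\rBr$-action. By the Remark following Proposition~\ref{P:maurercartan} the $\Ainf$-structure and the $\Ainf$-morphism $F$ attached to an $\hsc_1$-algebra are strict, so that $A$ is an honest associative algebra and $\varphi\colon A\to\End^+(V)$ a strict morphism of associative algebras. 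Consequently, by Theorem~\ref{T:mapcone}, $\Def(A,V)$ is the pair $\bigl(\overline{CH}(A,A),\,\su^{-1}\overline{CH}(A,\End^+(V))\bigr)$ of reduced Hochschild complexes of $A$ with coefficients in $A$ and in the associative algebra $\End^+(V)$. As these reduced complexes are quasi-isomorphic to the full complexes $CH^*(A,A)$ and $CH^*(A,\End^+(V))$, this yields the claimed $\rBr$-algebra structure on the pair.

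For the homology statement I would invoke Theorem~\ref{th: equiv}: since $\rBr$ is an $\sc_2$-operad, one has $H_*(\rBr)=\hsc_2$, so the homology of any $\rBr$-algebra is an $\hsc_2$-algebra, which by the description of $\hsc_2$-algebras recalled above is a Gerstenhaber algebra, an associative algebra, and a central morphism relating the two. It then remains to identify the three pieces. The closed part of $\rBr$ is Kontsevich--Soibelman's operad $\cal M$, whose homology is the Gerstenhaber operad $\Ger$; its action on $HH^*(A,A)$ through the brace generators $M_{1,n}$ is, by the $\Ainf$-version of Deligne's conjecture recovered in Section~3, the usual cup product and Gerstenhaber bracket, which is item~(a). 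The open braces $\Gamma_{1,n}$ together with the associative product on $\Do$ induce on $HH^*(A,\End^+(V))$ exactly the cup product built from the multiplication of $\End^+(V)$, which is item~(b). Finally the whistle generator $G_1=K_{0,1}$ induces on homology the map $\varphi_*=\hat F_*$, i.e.\ post-composition with $\varphi$, the central morphism of Theorem~\ref{T:mapcone}, which is item~(c).

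The main obstacle is matching, at the level of homology, the combinatorial operations of $\rBr$ with the classical Hochschild operations. For the closed part this is precisely the content of the recovered Deligne conjecture and needs no new argument. The genuinely new points concern the open part: one must verify that the degree-$0$ open braces descend to the cup product coming from $\End^+(V)$ and that the whistle descends to a \emph{central} morphism of associative algebras. Both follow by transporting the generators of $\rBr$ through the identification of Section~\ref{S:defcomplex} and the mapping-cone description of Theorem~\ref{T:mapcone}, the only delicate points being the bookkeeping of signs and of the suspension $\su^{-1}$ that enters the open component $\Do$.
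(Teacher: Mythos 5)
Your proposal is correct and takes essentially the same route as the paper, whose entire proof is the one-sentence assertion that the corollary is "a direct consequence" of exactly the ingredients you assemble: Theorem \ref{T:action}, the identification of $\Def(A,V)$ with the (reduced) Hochschild complexes via Section \ref{S:defcomplex} and Theorem \ref{T:mapcone}, and the fact that $\rBr$ is an $\sc_2$-operad so that $H_*(\rBr)=\hsc_2$. Your explicit matching of generators on homology (closed braces $M_{1,n}$ giving the Gerstenhaber structure, open braces $\Gamma_{1,n}$ the cup product from $\End^+(V)$, and the whistle $G_1$ the central morphism $\varphi_*$) supplies detail the paper leaves implicit, and is consistent with its constructions.
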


\begin{rem}\label{R:DTT} Notice that if we start with an associative algebra $A$ acting on itself on the right, it induces a morphism of algebras $\alpha: A\rightarrow \End(A)^{op}$. In that case Theorem \ref{T:action} holds, that is, the pair $\Def_{\SC}(A)=(CH^*(A,A),CH^*(A,\End(A)^{op}))$ is an $\rBr$-algebra. If $A$ is a unital algebra then the homology of $\Def_{\SC}(A)$ is the pair $(HH^*(A,A),A)$. It is then
an $\hsc_2$-algebra where the central morphism $f:HH^*(A,A)\rightarrow A$ is given by the inclusion $Z(A)\rightarrow A$.
We get then a lift of the action considered by D. Dolgushev, D. Tamarkin and B. Tsygan in \cite{DTT11} at the level the deformation complex $\Def_{\SC}(A)$.
The cochain considered in \cite{DTT11} is  the pair $(CH^*(A,A),A)$, which gives the same homology.

Hence, our approach is totally different in spirit. Here  we consider (right) braces operations
\[ \Gamma_{1,k}: CH^*(A,\End(A)^{op})\otimes CH^*(A,A)^{\otimes k}\rightarrow CH^*(A,\End(A)^{op})\]
whereas in \cite{DTT11} the authors consider (left) braces operations
\[ CH^*(A,A)\otimes A^{\otimes k}\rightarrow A\]
modelling the action of the operad $\End_A$ onto $A$.
Clearly, our chain complex is not an algebra over their $\SC_2$-operad and their chain complex is not an algebra over
$\rBr$. Hence our solution of the Deligne's Swiss-cheese conjecture is different from \cite{DTT11}.

\end{rem}

\section{The Operad $ \rBr $  is an $ {\rm SC}_2 $ operad}

 \newcommand{\zero}[1]{
\coordinate (A) at (0,0);
 \draw [-] ($(A)+ (0,-.5)$) -- (A);
  #1{(A)};
}
 \newcommand{\une}[2]{
\coordinate (A) at (0,0);
 \draw [-] ($(A)+ (0,-.5)$) -- (A);
 \draw [-] ($(A)+(0,1)$)-- (A);
 \draw ($(A)+(0,1)$) node [above] {\tiny$#2$};
  #1{(A)};
}
\newcommand{\deux}[3]{
\coordinate (A) at (0,0);
 \draw [-] ($(A)+ (0,-.5)$) -- (A);
 \draw [-] ($(A)+(-.35,1)$)-- (A);
 \draw [-] ($(A)+(0.35,1)$)-- (A);
 \draw ($(A)+(-.35,1)$) node [above] {\tiny$#2$};
 \draw ($(A)+(.35,1)$) node [above] {\tiny$#3$};
  #1{(A)};
}
\newcommand{\trois}[4]{
\coordinate (A) at (0,0);
 \draw [-] ($(A)+ (0,-.5)$) -- (A);
 \draw [-] ($(A)+(-.5,1)$)-- (A);
 \draw [-] ($(A)+(0,1)$)-- (A);
 \draw [-] ($(A)+(0.5,1)$)-- (A);
 \draw ($(A)+(-.5,1)$) node [above] {\tiny$#2$};
 \draw ($(A)+(0,1)$) node [above] {\tiny$#3$};
 \draw ($(A)+(.5,1)$) node [above] {\tiny$#4$};
  #1{(A)};
}
\newcommand{\quatre}[5]{
\coordinate (A) at (0,0);
 \draw [-] ($(A)+ (0,-.5)$) -- (A);
 \draw [-] ($(A)+(-1,1)$)-- (A);
 \draw [-] ($(A)+(-.35,1)$)-- (A);
 \draw [-] ($(A)+(0.35,1)$)-- (A);
 \draw [-] ($(A)+(1,1)$)-- (A);
 \draw ($(A)+(-1,1)$) node [above] {\tiny$#2$};
 \draw ($(A)+(-.35,1)$) node [above] {\tiny$#3$};
 \draw ($(A)+(.35,1)$) node [above] {\tiny$#4$};
 \draw ($(A)+(1,1)$) node [above] {\tiny$#5$};
  #1{(A)};
}
\newcommand{\cinq}[6]{
\coordinate (A) at (0,0);
 \draw [-] ($(A)+ (0,-.5)$) -- (A);
 \draw [-] ($(A)+(-1,1)$)-- (A);
 \draw [-] ($(A)+(-.5,1)$)-- (A);
 \draw [-] ($(A)+(0,1)$)-- (A);
 \draw [-] ($(A)+(0.5,1)$)-- (A);
 \draw [-] ($(A)+(1,1)$)-- (A);
 \draw ($(A)+(-1,1)$) node [above] {\tiny$#2$};
 \draw ($(A)+(-.5,1)$) node [above] {\tiny$#3$};
 \draw ($(A)+(0,1)$) node [above] {\tiny$#4$};
 \draw ($(A)+(.5,1)$) node [above] {\tiny$#5$};
 \draw ($(A)+(1,1)$) node [above] {\tiny$#6$};
  #1{(A)};
}
\newcommand{\deuxdeux}[4]{
\coordinate (A) at (0,0);
 \draw [-] ($(A)+ (0,-.5)$) -- (A);
 \draw [-] ($(A)+(-.35,1)$)-- (A);
 \draw [-] ($(A)+(0.35,1)$)-- (A);
 \draw [-] ($(A)+(0.35,1)+(-.35,.5)$) -- ($(A)+(0.35,1)$)-- ($(A)+(0.35,1)+(.35,.5)$);
 \draw ($(A)+(-.35,1)$) node [above] {\tiny$#2$};
 \draw ($(A)+(0.35,1)+(-.35,.5)$) node [above] {\tiny$#3$};
 \draw ($(A)+(0.35,1)+(.35,.5)$) node [above] {\tiny$#4$};
  #1{(A)};
}
\newcommand{\unevert}[2]{
\coordinate (A) at #2;
 \draw [-] ($(A)+ (0,-.5)$) -- (A);
 \draw [-] ($(A)+(0,1)$)-- (A);
  #1{(A)};
}
\newcommand{\deuxvert}[4]{
\coordinate (A) at #4;
 \draw [-] ($(A)+ (0,-.5)$) -- (A);
 \draw [-] ($(A)+(-1,1)$)-- (A);
 \draw [-] ($(A)+(0,1)$)-- (A);
 \draw [-] ($(A)+(1,1)$)-- (A);
 \draw ($(A)+(-1,1)$) node [above] {\tiny$#2$};
 \draw ($(A)+(1,1)$) node [above] {\tiny$#3$};
  #1{(A)};
}
\newcommand{\cinqvert}[6]{
\coordinate (A) at #6;
 \draw [-] ($(A)+ (0,-.5)$) -- (A);
 \draw [-] ($(A)+(-1.5,.5)$)-- (A);
 \draw [-] ($(A)+(-.75,.5)$)-- (A);
 \draw [-] ($(A)+(0,1)$)-- (A);
 \draw [-] ($(A)+(0.75,.5)$)-- (A);
 \draw [-] ($(A)+(1.5,.5)$)-- (A);
 \draw ($(A)+(-1.5,.5)$) node [above] {\tiny$#2$};
 \draw ($(A)+(-.75,.5)$) node [above] {\tiny$#3$};
 \draw ($(A)+(.75,.5)$) node [above] {\tiny$#4$};
 \draw ($(A)+(1.5,.5)$) node [above] {\tiny$#5$};
  #1{(A)};
}
\newcommand{\quatreleft}[6]{
\coordinate (A) at (0,0);
 \draw [-] ($(A)+ (0,-.5)$) -- (A);
 \draw [-] ($(A)+(-1,1)$)-- (A);
 \draw [-] ($(A)+(-.5,.5)$)-- ($(A)+(-.5,.5)+(0,.5)$);
 \draw [-] ($(A)+(-.5,.5)$)-- ($(A)+(-.5,.5)+(.5,.5)$);
 \draw [-] ($(A)+(1,1)$)-- (A);
 \draw ($(A)+(-1,1)$) node [above] {\tiny$#2$};
 \draw ($(A)+(-.5,.5)+(0,.5)$) node [above] {\tiny$#3$};
 \draw ($(A)+(-.5,.5)+(.5,.5)$) node [above] {\tiny$#4$};
 \draw ($(A)+(1,1)$) node [above] {\tiny$#5$};
  #1{(A)};
  #6{($(A)+(-.5,.5)$)};
}
\newcommand{\quatreright}[6]{
\coordinate (A) at (0,0);
 \draw [-] ($(A)+ (0,-.5)$) -- (A);
 \draw [-] ($(A)+(-1,1)$)-- (A);
 \draw [-] ($(A)+(.5,.5)$)-- ($(A)+(.5,.5)+(-0,.5)$);
 \draw [-] ($(A)+(.5,.5)$)-- ($(A)+(.5,.5)+(-.5,.5)$);
 \draw [-] ($(A)+(1,1)$)-- (A);
 \draw ($(A)+(-1,1)$) node [above] {\tiny$#2$};
 \draw ($(A)+(.5,.5)+(-0,.5)$) node [above] {\tiny$#4$};
 \draw ($(A)+(.5,.5)+(-.5,.5)$) node [above] {\tiny$#3$};
 \draw ($(A)+(1,1)$) node [above] {\tiny$#5$};
  #1{(A)};
  #6{($(A)+(.5,.5)$)};
}
 
\newcommand{\undeuxun}[4]{
\coordinate (A) at (0,0);
 \draw [-] ($(A)+ (0,-.5)$) -- (A);
 \draw [-] ($(A)+(0,1)$)-- (A);
 \draw (A) node [left] {\tiny$#3$};
 \draw ($(A)+(0,1)$) node [above] {\tiny$#4$};
  #1{(A)};
  #2{($(A)+(0,1)$)};
}
\newcommand{\undeuxdeux}[6]{
\coordinate (A) at (0,0);
 \draw [-] ($(A)+ (0,-.5)$) -- (A);
 \draw [-] ($(A)+(0,.5)$)-- (A);
 \draw [-] ($(A)+(0,.5)+(-0.5,.5)$)-- ($(A)+(0,.5)$)-- ($(A)+(0,.5)+(0.5,.5)$);
 \draw [-] ($(A)+(0,.5)+(.5,.5)+(-0.5,.5)$)-- ($(A)+(0,.5)+(.5,.5)$)-- ($(A)+(0,.5)+(.5,.5)+(0.5,.5)$);
 \draw ($(A)+(0,.5)+(-0.5,.5)$) node [above] {\tiny$#4$};
 \draw ($(A)+(0,.5)+(.5,.5)+(-0.5,.5)$) node [above] {\tiny$#5$};
 \draw ($(A)+(0,.5)+(.5,.5)+(0.5,.5)$) node [above] {\tiny$#6$};
  #1{(A)};
  #2{($(A)+(0,.5)$)};
  #3{($(A)+(0,.5)+(0.5,.5)$)};
}

\newcommand{\boxa}[2]{
\coordinate (A) at #1;
 \draw [black,fill=white] ($(A)+(-.5,-.25)$) rectangle ($(A)+(.5,.25)$) ;
 \draw ($(A)$) node  {\tiny #2};
}

\newcommand{\vertex}[2]{
\draw ($#1 +(0.05,.85)$) node {\tiny$\cdots$};
\draw ($#1-(0,0.2)$) node [left] {\small$v$};
 \draw [-] ($#1+(-1,1)$) -- #1 -- ($#1+(1,1)$);
 \draw [-] ($#1+(-.5,1)$) -- #1 -- ($#1+(.5,1)$);
 \draw [-] #1 -- ($#1-(0,.5)$);
#2{#1};
\draw ($#1+(-1,.85)$) node [above] {\small$1$};
  \draw ($#1+(1,.85)$) node [above] {\small$k$};
}
\newcommand{\arbrecrien}[4]{
 \draw [-] ($#1+(-1,1)$) -- #1 -- ($#1+(1,1)$);
 \brond{#1};  
 \draw ($#1-(0,.5)$) node [below] {\tiny$#2$};
\draw ($#1+(-1,1)$) node [above] {\tiny$#3$};
\draw ($#1+(1,1)$) node [above] {\tiny$#4$};
}
\newcommand{\arbrecrienrootl}[4]{
 \draw [-] ($#1+(-1,1)$) -- #1 -- ($#1+(1,1)$);
  \draw [-] #1 -- ($#1-(0,.5)$);
 \brond{#1};  
 \brond{($#1+(1,1)$)}; 
 \draw ($#1-(0,.5)$) node [below] {\tiny$#2$};
\draw ($#1+(-1,1)$) node [above] {\tiny$#3$};
\draw ($#1+(1,1)$) node [above] {\tiny$#4$};
}
\newcommand{\arbrecrienrootr}[4]{
 \draw [-] ($#1+(-1,1)$) -- #1 -- ($#1+(1,1)$);
  \draw [-] #1 -- ($#1-(0,.5)$);
 \brond{#1};  
 \brond{($#1+(-1,1)$)};
 \draw ($#1-(0,.5)$) node [below] {\tiny$#2$};
\draw ($#1+(-1,1)$) node [above] {\tiny$#3$};
\draw ($#1+(1,1)$) node [above] {\tiny$#4$};
}

\newcommand{\arbrepartialll}[2]{
\draw ($#1 +(0.05,.85)$) node {\tiny$\cdots$};
\draw ($#1-(0,0.2)$) node [right] {\tiny$v_2$};
 \draw [-] #1 -- ($#1+(1,1)$);
 \draw [-] ($#1+(-.5,1)$) -- #1 -- ($#1+(.5,1)$);
 \draw [-] #1 -- ($#1-(0,1.5)$);
#2{#1};
\draw ($#1-(0,1.5)$) node [below] {\tiny$T_0$};
\draw ($#1+(-.5,1)$) node [above] {\tiny$T_2$};
  \draw ($#1+(1,1)$) node [above] {\tiny$T_k$};
}
\newcommand{\arbrepartialrr}[2]{
\draw ($#1 +(0.05,.85)$) node {\tiny$\cdots$};
\draw ($#1-(0,0.2)$) node [left] {\tiny$v_2$};
 \draw [-] ($#1+(-1,1)$) -- #1 ;
 \draw [-] ($#1+(-.5,1)$) -- #1 -- ($#1+(.5,1)$);
 \draw [-] #1 -- ($#1-(0,1.5)$);
#2{#1};
\draw ($#1-(0,1.5)$) node [below] {\tiny$T_0$};
\draw ($#1+(-1,1)$) node [above] {\tiny$T_1$};
  \draw ($#1+(1,1)$) node [above] {\tiny$T_{k-1}$};
}

\newcommand{\arbreblowas}[3]{
\draw ($#1-(0,0.2)$) node [left] {\tiny$v^1$};
\draw ($#1 +(-0.7,.85)$) node {\tiny$\cdots$};
\draw ($#1 +(1.45,.85)$) node {\tiny$\cdots$};
 \draw [-] ($#1+(-1.5,1)$) -- #1 -- ($#1+(2.2,1)$);
 \draw [-] ($#1+(-.2,1)$) -- #1 -- ($#1+(.5,1)$);
 \draw [-] #1 -- ($#1-(0,.5)$);
\coordinate (A) at ($#1+(.5,1)$);
\draw ($(A)-(0,0.2)$) node [right] {\tiny$v^2$};
\draw [-] ($(A)+(-1,1)$) -- (A) -- ($(A)+(1,1)$);
 \draw ($#1-(0,.5)$) node [below] {\tiny$T_0$};
 \draw ($#1+(-1.5,1)$) node [above] {\tiny$T_1$};
 \draw ($#1+(-.2,.9)$) node [above] {\tiny$T_{i-1}$};
  \draw ($#1+(2.2,1)$) node [above] {\tiny$T_k$};
  \draw ($(A)+(-1,1)$) node [above] {\tiny$T_{i}$};
  \draw ($(A)+(1,1)$) node [above] {\tiny$T_{i+1}$};
  #2{#1};
  #3{(A)};
}

In \cite{Q-SCMRL} a dg-operad $\RS$ was defined and shown to be an $\text{SC}_2$-operad. 
In this section we prove that there exists a weak equivalence of operads $\Phi:\rBr\to \RS$.

In Section \ref{sec: def rs} we recall the operad $\RS$ and fix notation. In particular, we consider $\RS$ as generated by a set  $\Tas$.  We also introduce the morphism $\Phi$. In Section \ref{sec: bu and nested} and \ref{sec: bu and nested as} we describe the blow-up components of the complexes $\rBr(I;x)$ and $\RS(I;x)$ in terms of \emph{nested intervals}. In those sections, the contraction map is defined.
In order to prove that $\Phi$ is a weak equivalence, we show that the complexes $\rBr(I;x)$ and $\RS(I;x)$ admit a cellular decomposition (see Definition \ref{de: cell decompo}) over the same poset $\Tas(I;x)$.  In the case of $\RS$, the cells involved are isomorphic to the cellular complexes of the standard simplexes. In the case of $\rBr$ the standard simplexes are replaced by the Stasheff's polytopes \cite{Sta1} and the cyclohedra.  
 With this geometric description, the morphism $\Phi$ may be seen as induced by the contraction of the Stasheff polytope onto a point. Consequently  $\Phi$ sends the $n$-dimensional cyclohedron onto the $n$-simplex. This is performed combinatorially by the contraction map.

Section \ref{sec: cell decompo} is devoted to the description of the cellular decompositions and Section \ref{sec: proof closed} is the proof  that $\rBr(I;x)$ has a cellular decomposition indexed by $\Tas(I;x)$. 

\newcommand{\Tsb}{\cal{T}_{sb}}

\subsection{Definition of the relative surjections operad} \label{sec: def rs}

We define an operad $\RS$ which is, up to signs, the \emph{relative surjections operad} obtained in \cite{Q-SCMRL} by using the filtration from Remark 3.5 in \emph{loc. cit.}
\\

The $\{\cl;\op\}$-colored operad $\RS$ is described as the free $\Z$-module generated by a set $\Tas$ of planar rooted trees and words of planar rooted trees defined as follows. Let $\mathcal{T}_{2,1}\subset \mathcal{T}$ be the set of elements in $\cal T$ whose neutral round-shaped vertices have  $2$ inputs and  neutral square-shaped vertices have  one input. Let $R_{As}$ be the equivalence relation generated by 
\begin{equation}\label{eq: Ras}
\begin{tikzpicture}[baseline=1.ex,scale=0.5]
  \arbrecrienrootr{(0,0)}{T_0}{}{T_3};
  \arbrecrien{(-1,1)}{}{T_1}{T_2};
\end{tikzpicture}
\sim
\begin{tikzpicture}[baseline=1.ex,scale=0.5]
  \arbrecrienrootl{(0,0)}{T_0}{T_1}{};
  \arbrecrien{(1,1)}{}{T_2}{T_3};
\end{tikzpicture}
\text{ and }
\begin{tikzpicture}[baseline=1.ex,scale=0.65]
 \coordinate (A) at (-1,0);
 \coordinate (B) at (0,0);
  \draw [-] ($(A)+(0,1)$) -- (A) -- ($(A)-(0,.5)$);
  \draw [-] ($(B)+(0,1)$) -- (B) -- ($(B)-(0,.5)$);
 \brect{($(A)$)};
 \brect{($(B)$)};
 \draw ($(A)+(0,1)$) node [above] {\tiny$T_1$};
 \draw ($(B)+(0,1)$) node [above] {\tiny$T_2$};
\end{tikzpicture}
\sim
\begin{tikzpicture}[baseline=1.ex,scale=0.65]
 \coordinate (A) at (0,0);
 \draw [-] ($(A)+(0,1)+(-1,1)$) -- ($(A)+(0,1)$) -- ($(A)+(0,1)+(1,1)$);
  \draw [-] ($(A)+(0,1)$) -- (A) -- ($(A)-(0,.5)$);
 \brect{(A)};  
 \brond{($(A)+(0,1)$)};
 \draw ($(A)+(-1,2)$) node [above] {\tiny$T_1$};
 \draw ($(A)+(1,2)$) node [above] {\tiny$T_2$};
\end{tikzpicture}.
\end{equation}
Let us define 
\[
\Tas:=\mathcal{T}_{2,1}/R_{As}.  
\]
Notice that since the neutral round-shaped and the neutral square-shaped vertices have degree $0$, the relations $R_{As}$ are homogeneous. 
As in Section \ref{sec: RBr def}, for $I=(s_1,\ldots,s_n)$ a sequence in $\{\cl,\op\}$ and $x\in\{\cl,\op\}$, the $\Z$-modules $\RS(I;x)$ and
$\Lambda_c\RS(I;x)$ are freely generated by the set $\Tas(I;x)$. The composition and differential in $\RS$ and $\Lambda_c\RS$ only differ by signs. In order to define the differential, it is more convenient to define it for the operad $\Lambda_c\RS$, especially since one can forget about
the sign determinant of a tree $T$.

Hence, let us write
\[
\Lambda_\cl\RS(I;x)=\bigoplus_{[T]\in\Tas(I;x)} \Z [T].
\]

The operadic compositions and the differential in $\Lambda_\cl\RS$ are defined in the same way as for $\Lambda_c\rBr$
respecting the constraint of the set $\mathcal T_{2,1}$. Thus the blow-ups  \eqref{deltaclosed} and \eqref{deltaopen} are respectively replaced by 
\begin{multline}\label{deltaclosedAS}
d_v\Bigg(
\begin{tikzpicture}[baseline=-0.65ex,scale=0.7]
 \arbrepartial{(0,0)}{\wrond};
 \end{tikzpicture}
 \Bigg)
=-
\begin{tikzpicture}[baseline=-0.65ex,scale=0.7]
 \arbrepartialll{(0,0)}{\wrond};
 \coordinate (A) at (0,-.8);
 \brond{(A)};
 \draw [-] ($(A)+(-0.8,.8)$) -- (A);
 \draw ($(A)+(-0.8,.8)$) node [above] {\tiny $T_1$};
 \draw (A) node [right] {\tiny $v_1$};
\end{tikzpicture}
-
\begin{tikzpicture}[baseline=-0.65ex,scale=0.7]
 \arbrepartialrr{(0,0)}{\wrond};
 \coordinate (A) at (0,-.8);
 \brond{(A)};
 \draw [-] ($(A)+(0.8,.8)$) -- (A);
 \draw ($(A)+(0.8,.8)$) node [above] {\tiny $T_k$};
 \draw (A) node [left] {\tiny $v_1$};
\end{tikzpicture}
+\sum_{1\leq i\leq k-1}
\begin{tikzpicture}[baseline=-0.65ex,scale=0.7]
 \arbreblowas{(5,0)}{\wrond}{\brond};
\end{tikzpicture},
\end{multline}

and

\begin{multline}\label{deltaopenAS}
d_v\Bigg(
\begin{tikzpicture}[baseline=-0.65ex,scale=0.7]
 \arbrepartialsq{(0,0)}{\wrect};
 \end{tikzpicture}
 \Bigg)
=+\sum_{1\leq i\leq k-1}
\begin{tikzpicture}[baseline=-0.65ex,scale=0.7]
 \arbreblowas{(5,0)}{\wrect}{\brond};
\end{tikzpicture}
+
\begin{tikzpicture}[baseline=-0.65ex,scale=0.7]
 \arbrepartialsqlibre{(5,0)}{\wrect}{1}{k-1}{$v^1$};
  \coordinate (B) at (7,0);
  \draw [-] ($(B)+(0,1)$) -- (B) -- ($(B)-(0,.5)$);
 \brect{($(B)$)};
 \draw ($(B)+(0,1)$) node [above] {\tiny$T_k$};
 \draw (B) node [left] {\tiny $v_2$};
\end{tikzpicture}
-
\begin{tikzpicture}[baseline=-0.65ex,scale=0.7]
 \arbrepartialsqlibre{(7,0)}{\wrect}{2}{k}{$v^2$};
  \coordinate (B) at (5,0);
  \draw [-] ($(B)+(0,1)$) -- (B) -- ($(B)-(0,.5)$);
 \brect{($(B)$)};
 \draw ($(B)+(0,1)$) node [above] {\tiny$T_1$};
 \draw (B) node [left] {\tiny $v_1$};
\end{tikzpicture}.
\end{multline}

\newcommand{\arbrelr}[3]{
 \draw [-] ($#1+(-1,1)$) -- #1 -- ($#1+(1,1)$);
 \draw [-] #1 -- ($#1-(0,.5)$);
 \brond{#1};  
 \wrect{($#1+(-1,1)$)};
 \wrect{($#1+(1,1)$)}; 
  \draw ($#1+(-1,1)$) node [right] {$#2$};
  \draw ($#1+(1,1)$) node [above] {$#3$};
}
\newcommand{\arbrerr}[3]{
 \draw [-] ($#1+(-1,1)$) -- #1 -- ($#1+(1,1)$);
 \draw [-] #1 -- ($#1-(0,.5)$);
 \brond{#1};  
 \wrect{($#1+(-1,1)$)};
 \wrect{($#1+(1,1)$)}; 
  \draw ($#1+(-1,1)$) node [above] {$#2$};
  \draw ($#1+(1,1)$) node [left] {$#3$};
}
\newcommand{\arbrecr}[3]{
 \draw [-] ($#1+(-1,1)$) -- #1 -- ($#1+(1,1)$);
 \draw [-] #1 -- ($#1-(0,.5)$);
 \brond{#1};  
 \wrect{($#1+(-1,1)$)};
 \wrect{($#1+(1,1)$)}; 
  \draw ($#1+(-1,1)$) node [above] {$#2$};
  \draw ($#1+(1,1)$) node [above] {$#3$};
}
\begin{figure}[h]
 \begin{center}
\begin{tikzpicture}[baseline=1ex,scale=0.8]
 \coordinate (A) at (-1,0);
  \draw [-] (-1,-.5) -- (A) -- (-1,1);
  \draw [black,fill=white] (-1.1,-.1) rectangle (-.9,.1) ; 
  \draw [black,fill=white] (-1,1) circle (0.1)  ;
  \draw (-1,1.1) node [above] {\tiny$2$};
  \draw (-1,0) node [left] {\tiny$\un{1}$};
\end{tikzpicture}
$\circ_{\un{1}}$
\begin{tikzpicture}[baseline=1ex,scale=0.8]
\zero{\wrect};
\draw (0,0) node [left] {\tiny$\un{1}$};
  \end{tikzpicture}
  \begin{tikzpicture}[baseline=1ex,scale=0.8]
\zero{\wrect};
\draw (0,0) node [left] {\tiny$\un{2}$};
  \end{tikzpicture}
  $=$
  \begin{tikzpicture}[baseline=1ex,scale=0.8]
\undeuxun{\wrect}{\wrond}{\un{1}}{3};
\end{tikzpicture}
\begin{tikzpicture}[baseline=1ex,scale=0.8]
\zero{\wrect};
\draw (0,0) node [left] {\tiny$\un{2}$};
  \end{tikzpicture}
 $ +$
\begin{tikzpicture}[baseline=1ex,scale=0.8]
\zero{\wrect};
\draw (0,0) node [left] {\tiny$\un{1}$};
  \end{tikzpicture}
\begin{tikzpicture}[baseline=1ex,scale=0.8]
\undeuxun{\wrect}{\wrond}{\un{2}}{3};
\end{tikzpicture}
\end{center}\caption{Example of a composition in $\Lambda_c\RS$.}\label{fig: compo sS2}
\end{figure}

%
 For  $T\in \mathcal{T}_{2,1}$, we denote by $d_v^i(T)$ the $i$-th component of the blow-up \eqref{deltaclosedAS} or \eqref{deltaopenAS}. Thus  equations \eqref{deltaclosedAS} and  \eqref{deltaopenAS} are written respectively as
 \[
d_v(T)= -d^0_v(T)-d^k_v(T)+ \sum_{1\leq i\leq k-1} d^i_v(T)\ \text{ and } \
d_v(T)= \sum_{1\leq i\leq k-1} d^i_v(T)+d^k_v(T) -d^0_v(T).   
 \]

 Notice that the blow-up components of \eqref{deltaclosedAS} and \eqref{deltaopenAS}, defined on $\mathcal{T}_{2,1}$ with value in $\mathcal{T}_{2,1}$, induce blow-up components on $\Tas$. This means that, for $T,T'\in \mathcal{T}_{2,1}$ such that  $[T]=[T']\in \Tas$, one has $[d_v^i(T)]=[d_v^i(T')]$. 
 Let us define 
 \begin{equation*}
d_v^i([T]):=[d_v^i(T)] \text{ for any } T\in \mathcal{T}_{2,1}.  
 \end{equation*}

The map
  \[ 
  \Phi:   \rBr\to  \RS,
  \]
associates to  $T\in \mathcal{T}$ the element
\begin{equation*}
 \Phi(T)=
 \begin{cases}
 [T]  \text{ if } T \in \mathcal{T}_{2,1},\\
 0 \text{ otherwise}.
 \end{cases}
 \end{equation*}

 \begin{lem}\label{lem: morph dg-operad}
 The map $\Phi$ is a morphism of dg-operads. 
\end{lem}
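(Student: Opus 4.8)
The map $\Phi$ is the $\Z$-linear projection sending a generator $T\in\mathcal T$ to $[T]$ if $T\in\mathcal T_{2,1}$ and to $0$ otherwise, so three things must be checked: compatibility with the symmetric group action and the units, with the operadic composition, and with the differentials. The first is immediate, since $R_{As}$ is compatible with the relabelling $\beta_{W\cdot\sigma}=\rho_\sigma\beta_W$ and the units $\mathrm{id}_\cl,\mathrm{id}_\op$ already lie in $\mathcal T_{2,1}$. For the other two the guiding principle is that, by construction (Section \ref{sec: def rs}), the composition and differential of $\RS$ are exactly the $\mathcal T_{2,1}$-valued components of those of $\rBr$; hence I would split $\circ_{\rBr}$ and $d_{\rBr}$ into a part landing inside $\mathcal T_{2,1}$ and a part landing outside, the latter being annihilated by $\Phi$.

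For composition, let $W\in\mathcal T(s_1,\dots,s_n;x)$, $X\in\mathcal T(J;s_i)$ and let $v$ be the vertex of $W$ labelled by $l_i$; being labelled, $v$ is a white (closed or open) vertex. In $(W\otimes\epsilon_W)\circ_v(X\otimes\epsilon_X)=\sum_f(W\circ_v^fX)\otimes\epsilon_{W\setminus\{v\}}\wedge\epsilon_X$ each nondecreasing $f\colon In(v)\to A(X)$ grafts the inputs of $v$ onto the angles of $X$, raising the arities of the vertices of $X$ onto whose angles inputs are grafted. Consequently $W\circ_v^fX$ stays in $\mathcal T_{2,1}$ precisely when no neutral vertex of $X$ is enlarged, which is exactly the constraint selecting the maps $f$ summed over in the composition of $\RS$; thus $\Phi(W\circ_vX)=\Phi(W)\circ_v\Phi(X)$ when $W,X\in\mathcal T_{2,1}$. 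If $W\notin\mathcal T_{2,1}$ (resp. $X\notin\mathcal T_{2,1}$), the offending neutral vertex is distinct from the white vertex $v$ and survives, possibly enlarged, in every $W\circ_v^fX$, so all terms leave $\mathcal T_{2,1}$ and both sides vanish.

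For the differential I would treat $T\in\mathcal T_{2,1}$ first. Its neutral vertices are binary round and unary square, for which the blow-ups \eqref{deltaroundneutral} and \eqref{deltaneutralsquare} have empty index ranges (for $k=2$ and $k=1$), so they contribute nothing; the white vertices contribute through \eqref{deltaclosed} and \eqref{deltaopen}, and among the resulting trees $\Phi$ keeps exactly those whose newly created neutral vertex is again binary round or unary square, namely the components retained in \eqref{deltaclosedAS} and \eqref{deltaopenAS}. Hence $\Phi(d_{\rBr}T)=d_{\RS}(\Phi T)$.

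The remaining, and most delicate, point is to prove $\Phi(d_{\rBr}T)=0$ for $T\notin\mathcal T_{2,1}$, matching $d_{\RS}(\Phi T)=0$. A tree fails to be in $\mathcal T_{2,1}$ only through a neutral vertex of excessive arity. If $T$ has two such vertices, or a neutral round vertex of arity $\ge 4$, or a neutral square vertex of arity $\ge 3$, then a single blow-up cannot remove the obstruction and every summand of $d_{\rBr}T$ stays outside $\mathcal T_{2,1}$, so $\Phi$ kills it. This isolates two borderline cases with a unique bad vertex $v$. When $v$ is a ternary neutral round vertex, the only blow-ups of \eqref{deltaroundneutral} landing in $\mathcal T_{2,1}$ are $d_v^{[1,2]}(T)$ and $d_v^{[2,3]}(T)$, the left- and right-combed trees $((T_1T_2)T_3)$ and $(T_1(T_2T_3))$ identified by the first relation of \eqref{eq: Ras}; when $v$ is a binary neutral square vertex, the two survivors of \eqref{deltaneutralsquare} are the word of two unary squares and the unary square carrying a binary round vertex, identified by the second relation of \eqref{eq: Ras}. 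In each case the two survivors coincide in $\Tas$, and the heart of the argument is a sign computation: I expect to show that the orientations attached to them by $d_{\Lambda_\cl\rBr}$ — which secretly encode the $A_\infty$- and $A_\infty$-module relations packaged in \eqref{deltaroundneutral} and \eqref{deltaneutralsquare} — become opposite after the $R_{As}$-identification, so the two images cancel and $\Phi(d_{\rBr}T)=0$. Pinning down that this relative sign is $-1$ in both borderline cases is the crux on which the chain-map property rests.
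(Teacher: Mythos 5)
Your overall structure is right, and it is the direct verification that the paper's one-word proof (``Straightforward'') gestures at: equivariance and units, the two cases of the composition (both vanish when one factor leaves $\mathcal{T}_{2,1}$, since composition happens at a labelled vertex and never destroys a neutral one), the termwise matching of the $\mathcal{T}_{2,1}$-components of \eqref{deltaclosed}--\eqref{deltaopen} with \eqref{deltaclosedAS}--\eqref{deltaopenAS} when $T\in\mathcal{T}_{2,1}$, and the isolation of the two borderline cases (a unique ternary neutral round vertex, or a unique binary neutral square vertex) as the only place where $\Phi(d_{\rBr}T)=0$ is not termwise obvious.

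The relative sign you defer does come out to be $-1$, and it can be pinned down without chasing determinants. The lemma concerns $\Phi:\rBr\to\RS$ in the unsuspended grading of Convention \ref{conv:rbr}, in which $\partial_2$ and $G_1$ have degree $0$; this is exactly why the paper can impose the relations \eqref{eq: Ras} without signs (``the relations $R_{As}$ are homogeneous''). In that grading the blow-up sums carry the classical position-alternating Koszul signs: desuspending the uniform-sign formulas \eqref{deltaroundneutral} and \eqref{deltaneutralsquare} (equivalently, the relations $d(\delta_n)=-\sum\delta_i\circ\delta_j$ and $d(G_n)=-\sum\mu(G_a\otimes G_b)-\sum G_i\circ\delta_j$ of Appendix \ref{A:A}) yields, up to an overall sign, $d_{\rBr}(\partial_3)=\partial_2\circ_1\partial_2-\partial_2\circ_2\partial_2$ and $d_{\rBr}(G_2)=\mu(G_1\otimes G_1)-G_1\circ\partial_2$, because operadic (de)suspension is precisely what trades uniform signs for alternating ones. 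Hence in $\rBr$ the two borderline survivors occur with \emph{opposite} signs, the $R_{As}$-identification is with coefficient $+1$, and they cancel; this is equivalent to your formulation that, in the $\Lambda_\cl$-picture where the two survivors carry the same sign, the identification must reverse orientation. With this one computation made explicit, your argument is complete -- and more informative than the proof recorded in the paper.
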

\begin{proof}
Straightforward. 
 \end{proof}

\subsection{Blow-up components in $\cal T$ as nested intervals}\label{sec: bu and nested}
In  \cite{Sta1} was given an interpretation of the $(n-k+1)$-dimensional faces of the $(n-2)$-dimensional Stasheff polytope $K_{n}$ in terms of trees. 
With our notation, those trees may be identified to rooted planar trees  with $n$ leaves (= terminal vertices) and $k$ neutral round-shaped vertices, that is, to iterations of blow-up components of the tree $\partial_{n}$ (see Figure \ref{fig: generators}).    
 We give a similar description of the generators  $M_{1,n}$, $G_{n}$ and $\Gamma_{1,n}$ as well as of their blow-up components. 
The two generators $G_{n}$ of degree $n-1$ and $\Gamma_{1,n}$ of degree $n$ correspond in this description to the top cell of $K_{n+1}$ and $K_{n+2}$ respectively. 
The generator $M_{1,n}$ of degree $n$ corresponds to the top cell of the $n$-dimensional cyclohedron $W_n$.

\subsubsection{On the poset structure of $\cal T$}

\begin{nota}
 Let $(\mathcal{P},\leq)$ be a partially ordered set (poset for short).  
 Any subset $S$ of $\cal{P}$ is a poset for: $a\leq b \in S \Leftrightarrow a\leq b \in \cal{P}$. 
 For $a\in \mathcal{P}$, let $(\mathcal{P})_{\leq a}$ be the set of elements $b\in \cal{P}$ such that $b\leq a$.  
The meet of $a$ and $b$ (the greatest lower bound of the set $\{a,b\}$) is denoted by $a\wedge b$.

 The product of two posets $\cal{P}_1$ and $\cal{P}_2$ is a poset for the following structure. For $(a_1,a_2)$ and $(b_1,b_2)$ in $\cal{P}_1\times \cal{P}_2$, one has  $(a_1,a_2)\leq (b_1,b_2) \Leftrightarrow a_1\leq b_1$ and $a_2\leq b_2$.
 
  A poset $\mathcal{P}$ naturally defines a category (also denoted $\mathcal{P}$) with objects the elements of $\mathcal{P}$ and  morphism $\cal{P}(a,b)=*$ if $a\leq b$ and $\cal{P}(a,b)=\emptyset$ otherwise.  
\end{nota}

\begin{defn}
The set $\mathcal{T}(I;x)$ has naturally a poset structure given by the blow-up components. 
Explicitly, $T'\leq T$ if and only if $T'$ is obtained from $T$ by an iteration of blow-up components of the form \eqref{deltaroundneutral}, \eqref{deltaclosed}, \eqref{deltaopen} or \eqref{deltaneutralsquare}. 
\end{defn}

For $T\in \cal{T}$, let us denote by $<(\mathcal{T})_{\leq T}>$ the sub complex of $\rBr$ generated by $(\mathcal{T})_{\leq T}$. 

\begin{prop}{\cite[Proposition 9.2.8.]{LodVal}}\label{prop: 9.2.8}
For each $n\geq 2$,   $<(\cal{T})_{\leq \partial_n}>$ is the cellular chain complex of $K_n$, $C^{cell}_*(K_n)$. 
\end{prop}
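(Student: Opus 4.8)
The plan is to produce an isomorphism of chain complexes $<(\mathcal{T})_{\leq\partial_n}>\;\cong C^{cell}_*(K_n)$ by setting up a dictionary between the trees indexing our generators and the faces of the associahedron, and then checking that the blow-up differential is the cellular boundary. Since $\rBr$ is already a dg-operad, $<(\mathcal{T})_{\leq\partial_n}>$ is automatically a subcomplex, so the only task is to identify it with the cellular chains of $K_n$; in particular I will not need to re-verify $d^2=0$.

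First I would record that a blow-up of a neutral round-shaped vertex, as in \eqref{deltaroundneutral}, produces again only neutral round-shaped vertices; hence every $T\in(\mathcal{T})_{\leq\partial_n}$ is a planar rooted tree with $n$ input leaves whose internal vertices are all neutral round-shaped, each having at least two inputs. By Stasheff's classical description \cite{Sta1}, such trees are exactly the cells of the associahedron $K_n$, the tree with $v$ internal vertices corresponding to a face of dimension $n-1-v$. In particular $\partial_n$ (one vertex) is the top cell of dimension $n-2$, and the binary trees ($v=n-1$) are the vertices of $K_n$. By Convention \ref{conv:rbr} the degree of such a $T$ in $\rBr$ equals $n-1-v$, so it coincides with the dimension of the associated face and the graded pieces match rank by rank.

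Next I would identify the two poset structures. A single blow-up component \eqref{eq: notation bu nrs} replaces an internal vertex $v$ having inputs $T_1,\dots,T_k$ by two vertices $v^1,v^2$ joined by an edge, where $v^2$ collects a consecutive block $T_i,\dots,T_{i+j}$ of between $2$ and $k-1$ inputs (the constraints $i<i+j$ and $j\leq k-2$); this is precisely the passage from a face of $K_n$ to one of its codimension-one boundary faces, i.e. the insertion of a single inner bracket. Iterating, the blow-up order on $(\mathcal{T})_{\leq\partial_n}$ coincides with the face order of $K_n$, so $(\mathcal{T})_{\leq\partial_n}$ is isomorphic to the face poset of $K_n$.

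The crux, and the step I expect to be the main obstacle, is matching the signs so that the differential of $\rBr$ restricted to $<(\mathcal{T})_{\leq\partial_n}>$ becomes the cellular boundary of $K_n$. The cellular boundary sends a face to the signed sum of its codimension-one faces, with signs governed by a chosen orientation of each cell, whereas the $\rBr$-differential carries the signs dictated by the determinant line $\det(T)=v_1\wedge\cdots\wedge v_n$ together with the global factor $(-1)^{\sum_{j<i}|v_j|}$ from the differential formula and the replacement of $v_i$ by $v_i^1\wedge v_i^2$. I would fix the orientation of each cell to be the one represented by $\epsilon_T$, i.e. induced by the canonical bottom-to-top, left-to-right ordering of the internal vertices. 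In the $\Lambda_\cl\rBr$ normalisation, where every neutral round-shaped vertex has degree $-1$, that global factor reduces to $(-1)^{i-1}$, and combined with the reordering of the wedge it reproduces exactly the incidence numbers of the associahedral boundary. This is the content of \cite[Proposition 9.2.8]{LodVal}, to which one may appeal directly once this dictionary between our trees and theirs is in place.
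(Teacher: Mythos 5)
Your proposal is correct and takes essentially the same route as the paper: the paper offers no argument of its own here, simply attributing the statement to \cite[Proposition 9.2.8]{LodVal}, and your proof amounts to making explicit the dictionary (planar trees whose internal vertices are neutral round-shaped of arity at least two $\leftrightarrow$ faces of $K_n$, blow-up components $\leftrightarrow$ codimension-one faces, determinant lines $\leftrightarrow$ orientations) under which that citation applies, before invoking it. Nothing further is needed.
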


For $n\geq 0$, let us denote by $W_n$ the $n$-dimensional cyclohedron, see definition in \cite{Devadoss}.  

\begin{prop}\label{P:corolles}
For each $n\geq 0$, one has the following three isomorphisms. 
\begin{enumerate}
 \item $<(\mathcal{T})_{\leq G_{n+1}}>=  C^{cell}_*(K_{n+2})$.
 \item $<(\mathcal{T})_{\leq \Gamma_{1,n}}>=  C^{cell}_*(K_{n+2})$. 
 \item $<(\mathcal{T})_{\leq M_{1,n}}>=  C^{cell}_*(W_{n})$. 
\end{enumerate}
\end{prop}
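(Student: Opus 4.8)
The plan is to realize each sub-poset $<(\cal T)_{\leq T}>$ as the cellular chain complex of the stated polytope by exhibiting a dimension-preserving bijection between the elements of $(\cal T)_{\leq T}$ and the faces of the polytope, and then checking that this bijection is an isomorphism of posets (blow-up order against face inclusion) under which the blow-up differential becomes the cellular boundary. This is legitimate because the cellular chain complex of a polytope is determined by its face poset together with the incidence numbers of a coherent orientation of its cells, so the whole statement reduces to a combinatorial identification plus a sign check. The base case $n=0$ is trivial, every complex and every polytope being a point, and we already have the model case at our disposal: Proposition \ref{prop: 9.2.8} identifies $<(\cal T)_{\leq \partial_m}>$ with $C^{cell}_*(K_m)$ via Stasheff's description of the faces of $K_m$ as planar trees with $m$ leaves.

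For the two associahedral statements (1) and (2) the key is to read the open root of a square-rooted tree, and for $\Gamma_{1,n}$ also the open $R$-input, as extra \emph{marked} leaves. An element below $G_{n+1}$ is a word $S_1\cdots S_r$ of trees with neutral square roots and neutral round internal vertices carrying $n+1$ closed leaves; I would encode it as a single planar tree with $n+2$ leaves by grafting the $S_i$ along the open spine and placing the marked leaf at the open end, in the position prescribed by the open output. A degree count using $\deg\partial_k=k-2$ and $\deg G_k=k-1$ (Convention \ref{conv:rbr}) shows that the number of internal vertices of the image tree equals the total number of vertices of $S_1\cdots S_r$, so the encoding preserves dimension and maps onto all faces of $K_{n+2}$; the same bookkeeping with one further marked leaf handles $\Gamma_{1,n}$. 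Under this encoding the blow-up families of \eqref{deltaneutralsquare} (splitting off a round vertex; splitting the open spine) and of \eqref{deltaopen} (splitting off a round vertex; splitting the spine to the left or to the right of the open part) are exactly the facet families $K_a\times K_b$ of $\partial K_{n+2}$ obtained by bracketing a consecutive block of leaves, while the round-vertex blow-ups \eqref{deltaroundneutral} supply the remaining associahedral facets through Proposition \ref{prop: 9.2.8}.

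The cyclohedral statement (3) follows the same scheme with the closed-vertex blow-up \eqref{deltaclosed}, in which the intervals $[j,i]$ are read in the cyclic order carried by the closed output rather than in a linear order. The $n+1$ closed inputs of $M_{1,n}$, cyclically closed up by the output, play the role of the $n+1$ marked points of the cycle graph $C_{n+1}$, so that $(\cal T)_{\leq M_{1,n}}$ is identified with the poset of cyclic bracketings, i.e. the face poset of $W_n=\mathcal K(C_{n+1})$. Both sums of \eqref{deltaclosed} produce facets of the form $W_\bullet\times K_\bullet$ — each summand keeps one closed vertex (the smaller cyclohedron) and creates one round vertex (the associahedral factor) — the cyclic intervals $d_v^{[j,i]}$ being those arcs that cross the distinguished point determined by the output and the linear intervals $d_v^{[i,i+j]}$ being those that avoid it; together they realize the standard recursive description of $\partial W_n$.

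The step I expect to be the real obstacle is the sign check. Once the bijections are in place the incidence combinatorics is forced, but matching the orientations $\det(T)=v_1\wedge\cdots\wedge v_n$ carried by the trees with the incidence numbers of a coherent orientation of $K_{n+2}$ and $W_n$ requires tracking how the canonical order of the two new vertices $v^1,v^2$ produced by each blow-up sits inside $\det(T)$, and comparing it with the orientation that the face inherits as a boundary cell. This is delicate precisely for the word-splitting and cyclic-interval moves, where the planar reorganization of the tree permutes the canonical order; it is the only genuinely non-formal part of the argument, and I would handle it by fixing the orientation conventions on $K_{n+2}$ and $W_n$ to agree with the Koszul signs built into the model case $\partial_m$ and then propagating them along the bijections.
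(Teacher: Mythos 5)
Your proposal is correct and follows essentially the same route as the paper: the paper's maps $\natural$, $\natural'$ and $\flat$ are precisely your "marked leaf" encoding (they replace the square/open root by a round vertex and graft unit trees as extra inputs, composing along the word to form the spine), reducing (a) and (b) to the associahedral case of Proposition \ref{prop: 9.2.8}, while (c) is handled exactly as you suggest, by identifying $(\mathcal{T})_{\leq M_{1,n}}$ with nested cyclic intervals $\Nest(S(n))$ (Proposition \ref{P: bij cyclohedra}) and citing Markl's identification of that poset with the face poset of $W_n$. The sign bookkeeping you flag as the delicate point is dealt with in the paper exactly as you propose, by explicit orientation conventions ($\natural\epsilon_W$, $\flat\epsilon_W$ with factors $(-1)^p$ and $(-1)^{p+|W|}$) propagated along the bijections.
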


\begin{proof}
\textit{a)} Let 
\begin{equation}\label{bij: becarre}
\natural,\natural':(\mathcal{T})_{\leq G_n}\to (\mathcal{T})_{\leq\partial_{n+1}}     
    \end{equation}
be the following maps. 

For a  tree $T\in (\mathcal{T})_{\leq G_n}$, one defines $\natural(T)$ (resp. $\natural'(T)$)  as the tree obtained from $T$ by:
\begin{itemize}
 \item replacement of the neutral square-shaped vertex {$v$} by a neutral round-shaped vertex {$\natural{v}$ (resp. $\natural'{v}$}); and,
 \item grafting to {$\natural{v}$ (resp. $\natural'{v}$}) the unit tree 
\begin{tikzpicture}[baseline=-0.65ex,scale=0.9]
  \draw (0,-.4) -- (0,0);
  \wrond{(0,0)};
 \end{tikzpicture} 
 as its first (resp. last) input. 
\end{itemize}
  
For a word $W=T_1\cdots T_p\in (\mathcal{T})_{\leq G_n}$, one defines 
\begin{equation*}
\natural(W):=((\cdots (\natural(T_p)\circ_{v^p_{min}} \natural(T_{p-1}))\circ_{v^{p-1}_{min}} \cdots )\circ_{v^2_{min}} \natural(T_1)),
\end{equation*}
where $v^i_{min}$ denotes  the first closed vertex of $\natural(T_i)$, and \begin{equation*}
\natural'(W):=((\cdots (\natural'(T_1)\circ_{v^1_{max}} \natural'(T_{2}))\circ_{v^{2}_{max}} \cdots )\circ_{v^{p-1}_{max}} \natural'(T_p)),
\end{equation*}
where  $v^i_{max}$ denotes  the last closed vertex of $\natural'(T_i)$.

For each neutral square-shaped vertex $v$, one has

\begin{equation}
 \begin{split}\label{eq: becarre}
 \natural\circ d_{v}^{[i,i+j]}&=d_{\natural v}^{[i+1,i+j+1]}\circ \natural, \\
 \natural\circ d_{v}^{i}&=d_{\natural v}^{[1,i+1]}\circ \natural, ~~\text{ for all $i,j$ as in } \eqref{deltaneutralsquare}, \\
\natural'\circ d_{v}^{[i,i+j]}&=d_{\natural' v}^{[i,i+j]}\circ \natural', \\
 \natural'\circ d_{v}^{i}&=d_{\natural' v}^{[i+1,k+1]}\circ \natural', ~~\text{ for all $i,j$ as in } \eqref{deltaneutralsquare}.
\end{split}
\end{equation}

For each neutral round-shaped vertex $v$, one has 
\begin{equation}\label{eq: becarre 2}
\natural\circ d_{v}^{[i,i+j]}=d_{v}^{[i,i+j]}\circ \natural  ~~\text{ and  } ~~ \natural'\circ d_{v}^{[i,i+j]}=d_{v}^{[i,i+j]}\circ \natural'
 ~~\text{ for all $i,j$ as in } \eqref{deltaroundneutral}. 
\end{equation}
It follows that $\natural$ and $\natural'$ are bijections of posets. In $\Lambda_\cl\rBr$ these maps extend as
\begin{equation*}
 \natural(W\otimes \epsilon_W)=(-1)^p\natural(W)\otimes \natural\epsilon_W \text{ and }
\end{equation*}
\begin{equation*}
 \natural'(W\otimes \epsilon_W)=\natural'(W)\otimes \natural'\epsilon_W,
\end{equation*}
where $W$ is a word of length $p$ and
$\natural\epsilon_W:= v_\lft\wedge\natural v_1\wedge ... \wedge\natural v_k$ and
$\natural'\epsilon_W:=\natural v_1\wedge ... \wedge\natural v_k\wedge v_\rgt$
if $\epsilon_W= v_1\wedge ... \wedge v_k$ and $v_\lft$ and $v_\rgt$ denote the additional closed vertex from the definitions of $\natural$ and $\natural'$.  

It follows that   $\natural$  induces an isomorphism of complexes 
\begin{equation*}
\natural_*:(<(\mathcal{T})_{\leq G_n}>,d_{\rBr})\to (< (\mathcal{T})_{\leq \partial_{n+1}}>,d_{\rBr}).
\end{equation*}
The conclusion follows by Proposition  \ref{prop: 9.2.8}.
\\

 \textit{b)} 
The map
 \begin{equation}\label{eq: bij bemol}
\flat:(\mathcal{T})_{\leq \Gamma_{1,n}}\to (\mathcal{T})_{\leq \partial_{n+2}} ,
\end{equation}
associates to a tree $T\in (\mathcal{T})_{\leq \Gamma_{1,n}}$, the element $\flat(T)$ obtained from $T$ as follows:
\begin{itemize}
 \item replacement of the open vertex $v$ of $T$ by a neutral round-shaped vertex $\flat v$; and,
 \item grafting two unit trees 
\begin{tikzpicture}[baseline=-0.65ex,scale=0.9]
  \draw (0,-.4) -- (0,0);
  \wrond{(0,0)};
 \end{tikzpicture} above $\flat v$, 
 one as its first input, the other one as its last input. 
\end{itemize}
 
Any word $W$  in $(\mathcal{T})_{\leq \Gamma_{1,n}}$ writes uniquely as $W=W_lT_0W_r$ where $T_0$ is an open rooted tree and $W_l$ ad $W_r$ are words of neutral square-shape rooted trees.
We define 
\begin{equation*}
\flat(W):=\big( \flat(T_{0})\circ_{v_{max}}  \natural'(W_r) \big) \circ_{v_{min}} \natural(W_l), 
\end{equation*}
 where $v_{min}$ and  $v_{max}$ denote respectively the first and last closed vertex of $\flat(T_{0})$. 

 If $v$ denotes the open vertex of $W$, 
 then one has 
\begin{equation}
 \begin{split}\label{eq: flatt}
\flat\circ d_{v}^{[i,i+j]}&=d_{\flat v}^{[i+1,i+j+1]}\circ \flat, \\
 \flat\circ d_{v}^{[i+1,\rgt]}&=d_{\flat v}^{[i+2,k+2]}\circ \flat, \\
  \flat\circ d_{v}^{[\lft,i]}&=d_{\flat v}^{[1,i+1]}\circ \flat,
 ~~\text{ for all $i,j$ as in } \eqref{deltaopen}. 
\end{split}
\end{equation}
By combining the relations \eqref{eq: becarre}, \eqref{eq: becarre 2}, \eqref{eq: flatt} and $\flat(\Gamma_{1,n})=\partial_{n+2}$,  one obtains that $\flat$ is a bijection of posets.

Let us extend $\flat$ as a chain map. 
For $W=W_lT_0W_r\in (\mathcal{T})_{\leq \Gamma_{1,n}}$, with $W_l$ of length $p$, we write $\epsilon_W=\epsilon_{W_l}\wedge\epsilon_{T_0}\wedge \epsilon_{W_r}$ with $\epsilon_{T_0}=v_0\wedge V_0$, $v_0$ being the root of $T_0$. With this notation
\begin{equation*}
\flat_*(W\ot \epsilon_W):=(-1)^{p+|W|} \flat(W)\ot \flat\epsilon_{W}, 
\end{equation*}
where $\flat\epsilon_{W}:= \flat v_{0}\wedge \natural\epsilon_{W_l} \wedge V_0\wedge\natural'\epsilon_{W_r}$. 
It yields  an isomorphism of complexes 
\begin{equation*}
\flat_*:(<(\mathcal{T})_{\leq \Gamma_{1,n}}>,d_{\rBr})\to (< (\mathcal{T})_{\leq \partial_{n+2}}>,d_{\rBr}).
\end{equation*}
One concludes with Proposition  \ref{prop: 9.2.8}.
\\

\textit{c)}
From the proposition \ref{P: bij cyclohedra} below, there is a bijection
 $\Upsilon:\Nest(S(n))\to (\mathcal{T})_{\leq M_{1,n}}.$
 From Markl in \cite{Markl99}
 the poset $\Nest(S(n))$ (see definition below) labels the cyclohedron $W_{n}$ (see also \cite{Devadoss}). 
  It follows that $<(\mathcal{T})_{\leq M_{1,n}}>= C_*^{cell}(W_n)$. 
\end{proof}

\subsubsection{The poset $\cal{T}$ and the blow-up components}

In what follows, we describe the posets  $(\mathcal{T})_{\leq \Gamma_{1,n}}$, $(\mathcal{T})_{\leq M_{1,n}}$  and
$(\mathcal{T})_{\leq G_n}$ in terms of \emph{nested intervals}. 
More precisely, we prove a correspondence between nested intervals and  blow-up components of vertices. Given $T\in \cal{T}$, we
show that these nested intervals fully encode the poset $(\cal{T})_{\leq T}$, cf. Proposition \ref{prop: bij T no neutral}.

\newcommand{\sq}{
  \begin{tikzpicture}[baseline=-0.45ex,scale=1]
  \draw [black,fill=black] (-0.06,-0.06) rectangle (.06,.06);
 \end{tikzpicture}} 

We generalize the notation \eqref{eq: notation bu nrs}, \eqref{eq: notation bu cl}, \eqref{eq: notation bu op} and \eqref{eq: notation bu opcl} by mean of nested intervals.

\begin{defn}\label{def: strict interval} Let $S$ be a totally ordered set.
A \emph{strict interval} of $S$ is an interval which is not a singleton nor $S$.
For $n\geq 0$ the total ordered sets $\{\lft<1<...<n<\rgt\}$ and $\{1<...<n<\rgt\}$ are denoted respectively by $I(n)$ and $I^{right}(n)$.

Let $S(n)$ be the set $\{0,1,...,n\}$ endowed with the canonical cyclic order.  For $i,j\in\{0,\ldots,n\}$ with $i\not=j$ the
 \emph{strict interval} $[i,j]$ of $S(n)$ is either $[i,j]=\{i<i+1<...<j\}$ if $i< j$ or $[i,j]=\{i<i+1<...<n<0<1<...<j-1<j\}$ if $i>j$.
 Notice that the latter interval is denoted $j)(i$ in \cite{Markl99}.

\end{defn}

Let $S$ be a total ordered set or a cyclic ordered set.  The power set of the set of strict intervals in $S$ is denoted $ P^{2+}(S)$. It is a poset ordered by reverse inclusion, that is, $x\leq y\Leftrightarrow y\subset x$.
Any two elements $\cal{A},\cal{B}$ of   $ P^{2+}(S)$ have a meet which is $\cal{A}\wedge \cal {B}=\cal{A}\cup \cal{B}$.

An element $\cal{A}=\{A_t\}_{t\in K}$ of $P^{2+}(S)$ is said to be \emph{nested} if,  for each $r,s \in K$, one has $A_r\cap A_s\in \{ \emptyset, A_r, A_s\}$.  Let $\Nest(S)$
denote the set of nested elements of $P^{2+}(S)$ ; notice that the meet of nested elements $\cal{A}$ and $\cal{B}$ is not necessarily nested.

\begin{prop}\label{P: bij cyclohedra} There is a  bijection of posets 
 $\Upsilon:\Nest(S(n)) \to (\mathcal{T})_{\leq M_{1,n}}$. 
\end{prop}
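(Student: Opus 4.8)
The plan is to construct $\Upsilon$ explicitly as the map sending a nested family of strict intervals to the tree obtained from the corolla $M_{1,n}$ by performing the iterated blow-up components those intervals name, and then to verify that this assignment is an order isomorphism by exhibiting its inverse. Since both $\Nest(S(n))$ and $(\mathcal{T})_{\le M_{1,n}}$ are purely combinatorial posets, no topology enters: the statement is a matching of two labelings, and Markl's identification of $\Nest(S(n))$ with the cyclohedron is only invoked afterwards, in Proposition \ref{P:corolles}(c).

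First I would fix the identification of the cyclic set $S(n)=\{0,1,\dots,n\}$ with the ordered set of angles of the unique root closed vertex $v_0$ of $M_{1,n}$, with $0$ the output angle. Under this identification each strict interval $A$ of $S(n)$ names exactly one blow-up component of $v_0$: the wrap-around intervals (those containing $0$) are the cyclic terms $d_{v_0}^{[j,i]}$ of \eqref{deltaclosed}, while the intervals contained in $\{1,\dots,n\}$ are the non-cyclic terms $d_{v_0}^{[i,i+j]}$. The crucial structural observation, which I would isolate as a lemma, is that blowing up a closed vertex always leaves exactly one closed vertex in the result, and that this surviving closed vertex again carries a cyclic angle set which is canonically a strict sub-interval of $S(n)$, whereas every newly created neutral round-shaped vertex carries only a \emph{linear} angle set and can thereafter be blown up solely by the associahedral rule \eqref{deltaroundneutral}. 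This is precisely the dichotomy between cyclic and ordinary intervals in $\Nest(S(n))$, and it is what forces the cyclohedron rather than an associahedron to appear.

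With this in hand I would define $\Upsilon(\mathcal{A})$ by induction on $|\mathcal{A}|$: pick a maximal (for inclusion) element $A\in\mathcal{A}$, blow up the vertex it indexes, and recurse on the remaining family, which by the nesting condition restricts to nested families on the two angle sets just created. Well-definedness reduces to the commutation of blow-ups indexed by two disjoint, or two nested, intervals, which is built into the confluence underlying $d_{\rBr}^2=0$; independence of the chosen maximal interval follows. The empty family $\emptyset$, maximal for reverse inclusion, is sent to $M_{1,n}$, the maximum of $(\mathcal{T})_{\le M_{1,n}}$. Bijectivity I would establish by reading intervals off a tree $T\le M_{1,n}$: each neutral vertex $w$ of $T$ determines the set of positions of $S(n)$ lying above it, which one checks is always a strict interval, cyclic exactly when $w$ lies on the path from the closed vertex to the root; nesting of vertices in the planar tree translates into the nesting condition, so the family lies in $\Nest(S(n))$, and this assignment is a two-sided inverse to $\Upsilon$ by a direct check on blow-up generators. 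Order preservation in both directions is then immediate, since adding an interval (going down for reverse inclusion) is exactly performing one more blow-up (going down in $(\mathcal{T})_{\le M_{1,n}}$), giving $\mathcal{A}\le\mathcal{B}\iff\Upsilon(\mathcal{A})\le\Upsilon(\mathcal{B})$.

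I expect the main obstacle to be the bookkeeping in the inductive step: checking that after a blow-up each of the two newly created angle sets inherits a well-defined total or cyclic order, so that the remaining intervals of $\mathcal{A}$ restrict correctly, and in particular confirming that exactly one descendant retains the cyclic structure. Getting this compatibility right — equivalently, matching the cyclic-versus-linear interval types across \eqref{deltaroundneutral} and \eqref{deltaclosed} together with the signs on $\det(T)$ — is the delicate point and the step that requires the most care.
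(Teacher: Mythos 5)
Your proposal follows essentially the same route as the paper's proof: $\Upsilon$ is built interval-by-interval (larger intervals first) as iterated blow-ups of the corolla $M_{1,n}$, the inverse is obtained by reading a nested family off the neutral vertices of a tree, and order-preservation is immediate because adjoining an interval is exactly one more blow-up. The only caveat is your intermediate claim that the surviving closed vertex carries an angle set that is ``canonically a strict sub-interval of $S(n)$'' --- it is the cyclic set $\{0\}\cup\{\text{retained leaves}\}$, which need not be an interval of $S(n)$ --- but this imprecision is harmless for the recursion (the remaining intervals do restrict correctly to the new angle sets) and is of the same informal nature as the paper's own argument.
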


\begin{proof} 
From a nested family $\mathcal{A} \in \Nest(S(n))$, we can obtain a new such family by adjoining an interval $[a,b]$ satisfying the nesting condition, i.e., the interval $[a,b]$ is either contained in some interval of $\mathcal{A}$ or disjoint with any other interval $A  \in \mathcal{A}$. Under this condition, $\mathcal{A}' = \mathcal{A} \cup \{[a,b]\}$ is a new nested family. 

On the other hand, any tree in $(\cal{T})_{\leq M_{1,n}}$ can be obtained by iterating the two types of blow-ups appearing in (\ref{eq: notation bu cl}). We recall that the leaves of $M_{1,n}$ are labelled by $\{1,\ldots,n\}$ and the root is labelled by $v$.
We will describe the correspondence $\Upsilon$ between including an interval in a family of $\Nest(S(n))$ and blowing up a tree in $(\mathcal{T})_{\leq M_{1,n}}$.
If $\mathcal{A} = \emptyset$ is the empty family, then: $\Upsilon(\emptyset) = M_{1,n}$. 
Then we define $\Upsilon$ in such a way that every new interval added corresponds to a neutral  vertex connected to the vertices in the associated tree that are labeled by the non-zero elements in the interval. Those vertices whose labels do not belong to any interval in the family are directly connected to the  vertex $v$. 

If $\Upsilon(\mathcal{A}) = T$, let us consider that $\mathcal{A}' = \mathcal{A} \cup \{[\alpha, \beta]\}$ is a new nested family.   
The nesting condition says that the interval $[\alpha, \beta]$ is either contained or disjoint to the intervals of $\mathcal{A}$. In the case where it is disjoint, we blow up the vertex $v$ of the tree $T$. In the case where $[\alpha, \beta]$ is contained in some interval, we blow up the  neutral vertex of $T$ associated to that interval. In both cases, we get a tree $T'$. 
Its new neutral vertex is joined to those  vertices labeled in $[\alpha, \beta] - \{ 0 \}$. If $0\in [\alpha,\beta]$, then the new neutral vertex is the root of  $T'$.
We define the map $\Upsilon$ such that: $\Upsilon(\mathcal{A}') = T'$.

If we include more intervals in a nested family $\mathcal{A}$, we obviously get a new family $\mathcal{B}$ such that $\mathcal{A} \subseteq \mathcal{B}$, and the corresponding trees are obtained by iterated blow-ups. This shows that $\Upsilon$ is a map of posets. On the other hand, any tree in $(\cal{T})_{\leq M_{1,n}}$ is obtained by a sequence of iterated blow-ups respecting the nesting condition. This gives an inverse map to $\Upsilon$.
 Hence, it is a bijection of posets.
\end{proof}

\begin{nota} Fix an element $T\in\cal T$. Let $v$ be a vertex of $T$. As described in Section \ref{S: vertex}, $v$ has a certain type, denoted by $c_v$ with 
$c_v\in\{\bullet,\cl,\sq,\op\}$. Let $n=In(v)$. To the vertex $v$ is associated the set $I_v$ and 
the corolla $\Cor_v$, defined by the following

\[ I_v=\begin{cases} \{1<\ldots<n\}, & \text { if } c_v=\bullet, \\
S(n), & \text { if } c_v=\cl, \\
I^{\lft}(n), & \text { if } c_v=\sq, \\
I(n), & \text { if } c_v=\op, \end{cases} \quad \text{and} \quad 
Cor_v=\begin{cases} \delta_n, & \text { if } c_v=\bullet, \\
M_{1,n,} & \text { if } c_v=\cl, \\
G_n, & \text { if } c_v=\sq, \\
\Gamma_{1,n}, & \text { if } c_v=\op. \end{cases}\]

Notice that $I_v$ is a total ordered set except for $c_v=\cl$ where $I_v$ is a cyclic ordered set. 

\end{nota}

Let $S$ be a total ordered set on $k$ letters. Let $\kappa: \Nest(S)\to  (\mathcal{T})_{\leq \partial_{k}}$ be the bijection in which  $\Nest(S)$ is identified 
with  the poset of parenthesis of $k$ letters. This bijection of posets induces the following ones

\begin{equation}\label{eq: bij vertex}
 D_{c_v}:\Nest(I_v)\to (\mathcal{T})_{\leq \Cor_v},
\end{equation}

where $D_\bullet=\kappa$,  $D_\op=\flat^{-1}\circ \kappa$ and $D_{\sq}= \natural^{-1}\circ \kappa$, with $\flat$ and $\natural$ are the bijections
defined in \eqref{bij: becarre} and \eqref{eq: bij bemol} and $D_\cl$ is the bijection $\Upsilon: \Nest(S(n))\to (\mathcal{T})_{\leq M_{1,n}}$ from Proposition \ref{P: bij cyclohedra}.

 For $\cal{A}_v\in \Nest(I_v)$, let us define
 \begin{equation}\label{eq: not dva}
  d_v^{\cal{A}_v}(T)=T\circ_v^{\mathcal{T}} D_{c_v}(\cal{A}_v) \in \cal T,
 \end{equation}
obtained from $T$ by substituting the vertex $v$ by the word of trees $D_{c_v}(\cal{A}_v)$. (see  Figure \ref{fig: extension PRT} for $c_v=\sq$).
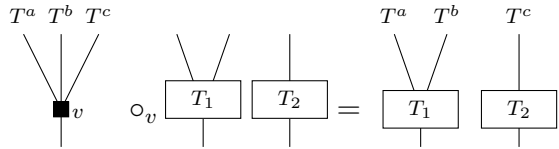
\begin{figure}[H]
 \begin{equation*}
 \begin{tikzpicture}[baseline=-0.65ex,scale=1]
 \trois{\brect}{T^a}{T^b}{T^c};
 \draw (0,-0.05) node [right] {\tiny$v$};
 \end{tikzpicture}
 \circ_v
 \begin{tikzpicture}[baseline=-0.65ex,scale=1]
 \deux{\brond}{}{};
 \boxa{(0,.13)}{$T_1$} ;
 \end{tikzpicture}
 ~~
 \begin{tikzpicture}[baseline=-0.65ex,scale=1]
 \une{\brond}{};
 \boxa{(0,.13)}{$T_2$} ;
 \end{tikzpicture}
 =
 \begin{tikzpicture}[baseline=-0.65ex,scale=1]
 \deux{\brond}{T^a}{T^b};
 \boxa{(0,0)}{$T_1$} ;
 \end{tikzpicture}
 ~~
 \begin{tikzpicture}[baseline=-0.65ex,scale=1]
 \une{\brond}{T^c};
 \boxa{(0,0)}{$T_2$} ;
 \end{tikzpicture}
 \end{equation*}\caption{Composition $T\circ_v^{\cal T} T_1T_2$.}\label{fig: extension PRT}
 \end{figure}
 
The substitution can be performed independently for every vertex $v$ in $T$, that is, $\forall v\not=w$ vertices of $T$, one has
\[  d_w^{\cal{A}_w}(d_v^{\cal{A}_v}(T))= d_v^{\cal{A}_v}(d_w^{\cal{A}_w}(T))\]
so that we can define $\Nest(T)= \prod_v \Nest(I_v)$ and a map 
 \begin{equation*}
 \begin{split}
\gamma^{\cal T}_T: \Nest(T) &\to (\cal{T})_{\leq T} \\
\cal{A}& \mapsto   d^{\cal{A}}(T):=\gamma^{\mathcal{T}}(T;D_{c_{v_1}}(\cal{A}_{v_1}),...,D_{c_{v_k}}(\cal{A}_{v_k})).
 \end{split}
 \end{equation*}

 \newcommand{\vertexhigh}[2]{
 \draw ($#1-(0,0.2)$) node [left] {\small$v$};
 \draw [-] ($#1+(-2,1)$) -- #1 -- ($#1+(2,1)$);
 \draw [-] ($#1+(-1,1)$) -- #1 -- ($#1+(1,1)$);
 \draw [-] ($#1+(-.5,1)$) -- #1 -- ($#1+(.5,1)$);
 \draw [-] ($#1-(0,.5)$) -- #1 -- ($#1+(0,1)$);
#2{#1};
}
\newcommand{\vertexbound}[3]{
 \draw [-]  #1 -- ($#1+(.8,.8)$);
 \draw [-] ($#1+(1,1)+(-.5,.5)$) -- ($#1+(.8,.8)$) -- ($#1+(1,1)+(.25,.5)$);
 \draw [-] ($#1+(1,1)+(.6,-.1)$) -- ($#1+(.8,.8)$) -- ($#1+(1,1)+(.6,.3)$);
 \draw [-] ($#1+(-.5,1)$) -- #1; 
 \draw [-] ($#1-(0,1)$) -- #1 ;
 \draw [-] ($#1-(0,.5)+(-1,-.1)$) -- ($#1-(0,.5)$) -- ($#1-(0,.5)+(-.8,.6)$) ;
#2{#1};
#3{($#1-(0,.5)$)};
#3{($#1+(.8,.8)$)};
}

 For example,
 \begin{equation*}
d_v^{[0,2],[4,7]}\left(
\begin{tikzpicture}[baseline=-0.65ex,scale=0.8]
 \vertexhigh{(0,0)}{\wrond};
 \end{tikzpicture}   
 \right) =
 \begin{tikzpicture}[baseline=-0.65ex,scale=0.75]
 \vertexbound{(0,0)}{\wrond}{\brond};
 \end{tikzpicture}   .
 \end{equation*}

\begin{rem}
The notation \eqref{eq: not dva} matches with the previous one, that is, if $A\in\Nest(I_v)$ is reduced to one interval, then $d_v^A$ corresponds to the notation introduced in equations \eqref{eq: notation bu nrs},  \eqref{eq: notation bu cl}, \eqref{eq: notation bu op} and \eqref{eq: notation bu opcl}. 
\end{rem}

\begin{prop}\label{prop: bij T no neutral}
 For $T\in \cal{T},$ the map $\gamma^{\cal T}_T$  is a bijection of posets. 
 Hence $<(\mathcal{T})_{\leq T}>$ is contractible.

\end{prop}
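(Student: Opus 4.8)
The plan is to exploit the fact that $\gamma^{\cal T}_T$ is assembled from the local substitutions $d_v^{\cal A_v}$, one per vertex $v$ of $T$, which the preceding discussion already shows commute with one another. First I would establish that the poset $(\cal T)_{\leq T}$ factors, as a poset, into the product $\prod_v(\cal T)_{\leq \Cor_v}$ indexed by the vertices of $T$. The crucial observation is that each blow-up rule \eqref{deltaroundneutral}, \eqref{deltaclosed}, \eqref{deltaopen}, \eqref{deltaneutralsquare} is purely local: blowing up a vertex $v$ only introduces new neutral vertices together with a single vertex retaining the output colour of $v$, and never transfers inputs between the clusters attached to distinct vertices of $T$. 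I would make this precise by checking that for any $T'\leq T$ the vertices of $T'$ partition canonically into clusters indexed by the vertices $v$ of $T$, each cluster being an iterated blow-up of the corolla $\Cor_v$, hence an element of $(\cal T)_{\leq \Cor_v}$. Reading off these clusters then provides a two-sided inverse to simultaneous substitution, yielding the poset isomorphism
\[ (\cal T)_{\leq T}\;\simeq\;\prod_v(\cal T)_{\leq \Cor_v}. \]

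Second, under this identification $\gamma^{\cal T}_T$ becomes the product $\prod_v D_{c_v}$ of the vertex-wise maps of \eqref{eq: bij vertex}, so it suffices to recall that each $D_{c_v}$ is already a bijection of posets: it is $\kappa$ for $c_v=\bullet$, the map $\Upsilon$ of Proposition \ref{P: bij cyclohedra} for $c_v=\cl$, and it is $\natural^{-1}\kappa$ for a neutral square-shaped vertex, resp. $\flat^{-1}\kappa$ for an open vertex ($c_v=\op$), where $\natural$ and $\flat$ are the poset isomorphisms of Proposition \ref{P:corolles}. Since a finite product of poset bijections is again a poset bijection, and both $\Nest(T)=\prod_v\Nest(I_v)$ and $(\cal T)_{\leq T}$ carry the corresponding product orders, I would conclude that $\gamma^{\cal T}_T$ is a bijection of posets.

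For contractibility I would upgrade the factorisation to the level of chain complexes. Because the blow-up differential acts one vertex at a time and blow-ups in distinct clusters commute, the bijection above refines to an isomorphism of complexes $<(\cal T)_{\leq T}>\cong\bigotimes_v<(\cal T)_{\leq \Cor_v}>$. By Proposition \ref{P:corolles} each factor is the cellular chain complex of an associahedron $K_n$ or of a cyclohedron $W_n$; these are convex polytopes, hence contractible, so each factor has the homology of a point. As every complex in sight is degreewise free over $\Z$ with free homology, the Tor terms in the Künneth formula vanish and $H_*(<(\cal T)_{\leq T}>)\cong\bigotimes_v\Z$ is $\Z$ concentrated in degree $0$, whence $<(\cal T)_{\leq T}>$ is contractible.

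The step I expect to be the main obstacle is the first one: justifying rigorously that the blow-up process is genuinely local, so that the clustering of the vertices of $T'$ by their ancestor in $T$ is well defined and that simultaneous substitution is bijective. This is a careful but elementary bookkeeping argument tracing, through each of the four blow-up rules, how inputs are distributed and how every newly created vertex keeps a well-defined ancestor; once it is in place, everything else reduces to the already-proved single-vertex bijections and to standard facts about products of posets and tensor products of acyclic complexes.
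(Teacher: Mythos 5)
Your proposal is correct and follows essentially the same route as the paper: both reduce the statement to the vertex-wise poset bijections $D_{c_v}$ of \eqref{eq: bij vertex} (via $\kappa$, $\natural$, $\flat$ and Proposition \ref{P: bij cyclohedra}) and then obtain contractibility from the tensor decomposition $<(\mathcal{T})_{\leq T}>\cong\bigotimes_v <(\mathcal{T})_{\leq \Cor_v}>$ together with Propositions \ref{prop: 9.2.8} and \ref{P:corolles}. The locality/clustering bookkeeping and the K\"unneth step that you spell out are exactly what the paper's terse proof leaves implicit, so your write-up is a more detailed version of the same argument.
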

\begin{proof}
Since the maps \eqref{eq: bij vertex} are bijection of posets, $\cal{A}\leq \cal{B}$ if and only if  $D_{c_v}(\cal{A})\leq D_{c_v}(\cal{B})$,  if and only if $T\circ_v^{\mathcal T} D_{c_v}(\cal{A})\leq T\circ_v^{\mathcal T} D_{c_v}(\cal{B})$. Thus $\gamma^{\cal T}_T$ is a bijection of posets.
As a consequence
$<(\mathcal{T})_{\leq T}>=\otimes_v <(\mathcal{T})_{\leq \Cor_v}>$ which is contractible, due to Propositions \ref{prop: 9.2.8} and  \ref{P:corolles}
\end{proof}

\subsubsection{The contraction map}

\begin{defn}\label{def: contraction map}
 The contraction map 
$c:\mathcal{T}\to \mathcal{T},$  
 is defined via the following rule:
 merge adjacent neutral vertices into one round-shaped vertex if the two vertices are round-shaped or into one square-shaped vertex if one of them is square-shaped.
  \end{defn}
  
 For example, 
  \begin{equation*}
  c\Bigg(
  \begin{tikzpicture}[baseline=.85ex,scale=.7]
 \une{\brect}{1};
 \end{tikzpicture}
 \begin{tikzpicture}[baseline=0.85ex,scale=.7]
 \deuxdeux{\brect}{2}{3}{4};
 \brond{($(A)+(0.35,1)$)};
 \end{tikzpicture}
 \Bigg)= 
 \begin{tikzpicture}[baseline=0.85ex,scale=.7]
 \quatre{\brect}{1}{2}{3}{4};
 \end{tikzpicture}.
\end{equation*}

  Let ${\cal T}_{\sq}$ be the subset 
 of $\cal T$ formed by words of neutral square-shaped rooted trees only. Let ${\cal T}_\bullet$ be the subset of $\cal T$ formed by trees $T$ such that its canonical ordered set $v_1<\ldots <v_k$ of vertices satisfy the following condition: there exists a closed leaf $v_i$ such that  $c_{v_j}=\bullet$ for every $j<i$. 
 
 \begin{prop}\label{prop: c et becarre} The bijection $\natural$ defined in \eqref{bij: becarre} extends to a bijection of posets
 \[\natural: {\cal T}_{\sq} \rightarrow {\cal T}_\bullet,\]
 commuting with $c$.
  \end{prop}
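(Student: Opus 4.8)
The plan is to first convert the poset-theoretic condition defining ${\cal T}_\bullet$ into a structural one, and then to identify ${\cal T}_\bullet$ as the image of $\natural$. Recall that the canonical order lists the vertices of a tree starting from the root and descending first along the leftmost branch; since every leaf of a tree in $\cal T$ is a closed round-shaped vertex, the leftmost leaf is the first leaf encountered in this order and it is preceded exactly by the internal vertices lying on the leftmost branch. Hence the defining condition of ${\cal T}_\bullet$ — the existence of a closed leaf $v_i$ with $c_{v_j}=\bullet$ for all $j<i$ — is equivalent to asking that the whole leftmost branch, from the root down to the leftmost leaf, consist of neutral round-shaped vertices. On the other side, an element $W=T_1\cdots T_r$ of ${\cal T}_{\sq}$ is a word of trees whose roots are neutral square-shaped vertices, all remaining vertices being round-shaped (neutral or closed).

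First I would record the effect of $\natural$ on such a word. By its definition in \eqref{bij: becarre}, $\natural$ turns each neutral square-shaped root $v_j$ of $T_j$ into a neutral round-shaped vertex $\natural v_j$, grafts a closed leaf as its first input, and then grafts the trees along these new leftmost leaves. The resulting tree $\natural(W)$ therefore has a leftmost branch $\natural v_r\to\natural v_{r-1}\to\cdots\to\natural v_1\to\ell$ made of neutral round-shaped vertices and ending in a single closed leaf $\ell$, while the former inputs of the roots $v_j$ become the non-leftmost subtrees. By the previous paragraph this shows $\natural(W)\in{\cal T}_\bullet$, so $\natural$ is well defined on ${\cal T}_{\sq}$. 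The inverse is the obvious reconstruction: given $T\in{\cal T}_\bullet$, cut the leftmost-child edges along the leftmost branch, delete the leftmost leaf, and turn each of the (neutral round-shaped) branch vertices back into a neutral square-shaped root; this yields a unique word in ${\cal T}_{\sq}$, and the two constructions are mutually inverse. That $\natural$ is a morphism of posets in both directions then follows from the intertwining relations \eqref{eq: becarre} and \eqref{eq: becarre 2}, together with the fact that $\natural$ leaves every non-root vertex untouched: each blow-up component of a neutral square-shaped or neutral round-shaped vertex corresponds under $\natural$ to a blow-up component of its image, and blow-ups of the remaining vertices (in particular of the closed corollas) are carried over identically. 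Consequently $T'\leq T$ if and only if $\natural(T')\leq\natural(T)$.

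It remains to check that $\natural$ commutes with the contraction map $c$ of Definition \ref{def: contraction map}, which is the delicate point, since $c$ collapses every edge joining two adjacent neutral vertices, including the adjacency between the consecutive neutral square-shaped roots of a word. The key observation is that $\natural$ sets up a bijection between the neutral--neutral adjacencies of $W$ and the neutral--neutral edges of $\natural(W)$: the adjacency between the roots of $T_{j+1}$ and $T_j$ corresponds to the spine edge $\natural v_{j+1}\to\natural v_j$, and the neutral round-shaped edges internal to the trees $T_j$ are carried over unchanged. The only edge created by $\natural$ that is not accounted for is the terminal edge $\natural v_1\to\ell$; but it joins a neutral round-shaped vertex to a \emph{closed} leaf, so it is never contracted by $c$. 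Hence contracting a neutral--neutral edge of $W$ before applying $\natural$ produces the same tree as contracting the corresponding edge of $\natural(W)$ afterwards, the grafted leaf $\ell$ remaining attached at the top of the (possibly shortened) leftmost branch in both cases; the vertex types also match, since a contraction involving a square-shaped vertex produces a square-shaped vertex in ${\cal T}_{\sq}$ whose $\natural$-image is neutral round-shaped, exactly as the contraction of the two neutral round-shaped spine vertices in $\natural(W)$. Iterating over all such edges gives $c\circ\natural=\natural\circ c$. I expect this final verification — making the correspondence of edges precise across the word structure and checking that the protected leaf edge and the vertex types behave coherently — to be the main obstacle, whereas the bijection and the poset statements are essentially formal consequences of \eqref{eq: becarre} and \eqref{eq: becarre 2}.
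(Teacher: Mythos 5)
Your proof is correct and follows essentially the same approach as the paper's own (very terse) proof, which simply observes that $\natural$ only modifies roots—so it extends to ${\cal T}_{\sq}$ with image exactly ${\cal T}_\bullet$—and that $c$ preserves both ${\cal T}_{\sq}$ and ${\cal T}_\bullet$, so that the definition of $c$ forces $c\natural=\natural c$. Your write-up supplies the details the paper leaves implicit (the leftmost-branch characterization of ${\cal T}_\bullet$, the explicit inverse, the poset property via \eqref{eq: becarre} and \eqref{eq: becarre 2}, and the correspondence of neutral--neutral adjacencies, with the protected edge to the grafted closed leaf), but it is the same argument.
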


  \begin{proof} The construction of the map $\natural$ extends to words of neutral square-shaped rooted trees since this construction is performed on the roots only, and thus the image of $\natural$ is precisely ${\cal T}_\bullet$. The two inclusions $c({\cal T}_{\sq})\subset {\cal T}_{\sq}$ and $c({\cal T}_\bullet)\subset {\cal T}_\bullet$ and the definition of $c$ imply that $c\natural=\natural c$.
  \end{proof}

 \subsection{Blow-up components in $\Tas$ as restrictive nested intervals}\label{sec: bu and nested as}
 
 \subsubsection{On the poset structure of $\Tas$}
 
\begin{defn}
The set $\Tas(I;x)$ has naturally a poset structure given by the blow-up components. 
Explicitly,  $[T']\leq [T]$ in $\Tas(I;x)$ if  $[T']$ is obtained from $[T]$ by iterations of blow-ups of the form \eqref{deltaclosedAS} or \eqref{deltaopenAS}  \ie   if 
\begin{equation}\label{eq: decompo boundary}
[T']=d_{v_r}^{i^{r,k_r}} \cdots d_{v_2}^{i^{2,1}} d_{v_1}^{i^{1,k_1}} \cdots  d_{v_1}^{i^{1,2}} d_{v_1}^{i^{1,1}} ([T]) 
\end{equation}
 for some $v_j$ non neutral vertices,  $i^{j,k}\geq 0$, $k_j\geq 0$ and $r\geq 0$. 
 
For $[T]\in \Tas$, let us denote by $<(\Tas)_{\leq [T]}>$ the sub complex of $\RS$ generated by $(\Tas)_{\leq [T]}$. 
\end{defn}

 \begin{rem}\label{rem: simplicial}
 For a fixed vertex $v$ the components $d_v^i$ satisfy the simplicial relations. Thus in \eqref{eq: decompo boundary}, the indices   $i^{s,{k_s}},...,i^{s,1}$ can be chosen such that $i^{s,{k_s}}\geq \cdots \geq i^{s,1}$. 
 Moreover, for two different vertices $v$ and $w$, one has $d_v^id_w^j=d_w^jd_v^i$, $\forall i,j$.
\end{rem}

 \begin{prop}\label{prop: M1n simplex} 
 For $n\geq 0$, one has 
 \[ <(\Tas)_{\leq [M_{1,n}]}>=C_*^{cell}(\Delta^n) \quad\text{ and } \quad <(\Tas)_{\leq [\Gamma_{1,n}]}>=C_*^{cell}(\Delta^{n}). \]
\end{prop}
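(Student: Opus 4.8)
The plan is to identify each of the two posets $(\Tas)_{\leq [\Gamma_{1,n}]}$ and $(\Tas)_{\leq [M_{1,n}]}$ with the face poset of the $n$-simplex, and then to recognize the associated subcomplex of $\RS$ as the cellular chain complex $C_*^{cell}(\Delta^n)$. The starting point is that both $\Gamma_{1,n}$ and $M_{1,n}$ have a \emph{single} non-neutral vertex $v$. In $\mathcal{T}_{2,1}$ the neutral vertices are atomic: a binary neutral round-shaped vertex and a unary neutral square-shaped vertex admit no blow-up, as one checks directly from \eqref{deltaroundneutral} and \eqref{deltaneutralsquare}. Hence every element of $(\Tas)_{\leq [\Gamma_{1,n}]}$ is reached by iterating the blow-up components of the open vertex, and every element of $(\Tas)_{\leq [M_{1,n}]}$ by iterating those of the closed vertex. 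By \eqref{deltaopenAS} (resp. \eqref{deltaclosedAS}) there are exactly $n+1$ such components $d^0_v,\ldots,d^n_v$, in bijection with the $n+1$ facets of $\Delta^n$.

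Next I would establish the poset isomorphism. By Remark \ref{rem: simplicial} the components $d^i_v$ satisfy the simplicial identities, so any iterated blow-up \eqref{eq: decompo boundary} has a unique normal form, and the occurring indices may be recorded as a subset of $\{0,\ldots,n\}$. Using the restrictive nested interval description of Section \ref{sec: bu and nested as}, this subset is exactly the set of consecutive (basic) intervals that are contracted in the class $[T]$. The role of the relation $R_{As}$ is decisive here: it identifies the different bracketings of a common block of indices, so that overlapping basic intervals are allowed to coexist and the nesting condition is dropped. Consequently the map sending $[T]$ to this subset is a bijection from $(\Tas)_{\leq [\Gamma_{1,n}]}$ onto the set of proper subsets of $\{0,\ldots,n\}$ (the full subset being degenerate), and it is a poset isomorphism onto the face poset of $\Delta^n$, a subset of cardinality $j$ corresponding to a codimension-$j$ face. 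The argument for $M_{1,n}$ via \eqref{deltaclosedAS} is identical.

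Finally I would match the chain complexes. The standard $n$-simplex with its faces as cells is a regular CW complex, so its cellular chain complex is determined by this face poset together with incidence numbers in $\{\pm1\}$ subject to $d^2=0$. The operad differential restricted to $<(\Tas)_{\leq [\Gamma_{1,n}]}>$ is the signed sum of the $d^i_v$ coming from \eqref{deltaopenAS}: each codimension-one face occurs with coefficient $\pm1$, and $d^2=0$ holds since $\RS$ is a differential graded operad. These data pin down $C_*^{cell}(\Delta^n)$ up to isomorphism, giving $<(\Tas)_{\leq [\Gamma_{1,n}]}>=C_*^{cell}(\Delta^n)$, and likewise $<(\Tas)_{\leq [M_{1,n}]}>=C_*^{cell}(\Delta^n)$.

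The main obstacle is the second step, namely verifying that $R_{As}$ collapses the associahedral poset $(\mathcal{T})_{\leq \Gamma_{1,n}}$ (resp. the cyclohedral poset $(\mathcal{T})_{\leq M_{1,n}}$, cf. Proposition \ref{P:corolles}) \emph{exactly} onto $\Delta^n$: it must identify all internal bracketings of a block while keeping the $n+1$ facets and every proper-subset face distinct. Concretely this reduces to checking that the simplicial normal form of Remark \ref{rem: simplicial} is faithful on $\Tas$. The accompanying sign verification is then routine, since the regular-CW argument above replaces an explicit comparison with the alternating boundary.
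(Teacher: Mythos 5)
Your proposal is correct and follows essentially the same route as the paper's proof: everything rests on Remark \ref{rem: simplicial}, identifying the iterated blow-ups at the root vertex, taken in simplicial normal form, with the faces of $\Delta^n$ (the paper phrases this as the bijection with injective poset maps $[n-k]\to[n]$, and asserts it with no more detail than you do, so flagging the faithfulness of the normal form on $\Tas$ as the remaining check puts you at the same level of rigor; your regular-CW sign-rescaling argument for matching the differentials is a cosmetic variant of the paper's ``it is the simplicial chain complex of this semi-simplicial set''). One slip should be corrected: $M_{1,n}$ and $\Gamma_{1,n}$ do \emph{not} have a single non-neutral vertex --- their $n$ leaves are labeled closed vertices --- but a labeled vertex with no inputs admits no blow-up components (all sums in \eqref{deltaclosedAS} are empty when $k=0$), so your conclusion that every element of $(\Tas)_{\leq [M_{1,n}]}$, resp.\ $(\Tas)_{\leq [\Gamma_{1,n}]}$, arises from blow-ups at the root vertex alone remains valid.
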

 
 \begin{proof}
 From Remark \ref{rem: simplicial} it follows that $(\Tas)_{\leq [M_{1,n}]}$ is a semi-simplicial set and $<(\Tas)_{\leq [M_{1,n}]}>$ is its simplicial chain complex. 
 The first result follows from 
 \[ \{ d^{i_k}_v\cdots d^{i_1}_v([M_{1,n}])\}_{i_k\geq \cdots \geq i_1\geq 0}=\{f:[n-k]\to [n]~|~ f \text{ is an injective poset map}\},\forall k\geq 0,\] where $v$ denotes the root vertex of $M_{1,n}$. 
 The second statement is proved with the same arguments. 
 \end{proof}

 Let ${\Tasc}$ (resp. ${\Tasr}$) be the sets ${\cal T}_{\sq}\cap \cal T_{2,1}/\sim$ (resp. ${\cal T}_{\bullet}\cap \cal T_{2,1}/\sim$).
 
 \begin{prop}\label{prop: c et becarre as} The bijection $\natural: \cal T_{\sq}\rightarrow \cal T_{\bullet}$ induces a bijection
 \[ \natural: {\Tasc}\rightarrow {\Tasr}\]
 \end{prop}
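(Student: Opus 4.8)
The plan is to promote the set bijection $\natural\colon {\cal T}_{\sq}\to {\cal T}_\bullet$ of Proposition \ref{prop: c et becarre} to a bijection of the quotients by $\sim$. I would proceed in two stages: first show that $\natural$ restricts to a bijection ${\cal T}_{\sq}\cap\cal T_{2,1}\to {\cal T}_\bullet\cap\cal T_{2,1}$, and then show that this restriction is compatible with the relation $R_{As}$ in both directions, so that it descends to a bijection on the quotients.

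For the first stage I would simply track the arities of the neutral vertices through the construction of $\natural$. By definition $\natural$ replaces each neutral square-shaped root $v$, which carries exactly one input once we are inside $\cal T_{2,1}$, by a neutral round-shaped vertex $\natural v$ to which a closed unit leaf $u$ is grafted as first input; hence $\natural v$ acquires exactly two inputs. Since every other vertex is left unchanged and, in $\cal T_{2,1}$, the neutral round-shaped vertices already present are binary (compare \eqref{eq: becarre 2}), the tree $\natural(T)$ again lies in $\cal T_{2,1}$. Symmetrically, $\natural^{-1}$ deletes the distinguished leftmost leaf and converts each such binary round-shaped vertex back into a one-input square-shaped vertex, so $\natural^{-1}$ also preserves the constraint $\cal T_{2,1}$. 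As $\natural$ is already a bijection ${\cal T}_{\sq}\to{\cal T}_\bullet$, it restricts to a bijection ${\cal T}_{\sq}\cap\cal T_{2,1}\to{\cal T}_\bullet\cap\cal T_{2,1}$.

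For the second stage I would test $\natural$ against the two generators of $R_{As}$ displayed in \eqref{eq: Ras}. The associativity relation for the neutral round-shaped binary vertex involves only round-shaped vertices, which $\natural$ leaves in place, so it is carried to the identical relation inside $\natural(T)$. The relation $\square(T_1)\,\square(T_2)\sim\square(\bullet(T_1,T_2))$ is the substantial one: applying $\natural$ to the left-hand word chains the two roots and yields $\bullet(\bullet(u,T_1),T_2)$, whereas applying $\natural$ to the right-hand tree yields $\bullet(u,\bullet(T_1,T_2))$, and these two closed trees differ by a single instance of round-vertex associativity. Thus $\natural$ sends each generator of $R_{As}$ into $R_{As}$ and descends to a well-defined surjection $\Tasc\to\Tasr$. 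The same inspection applied to $\natural^{-1}$ shows that an associativity move among the vertices on the distinguished leftmost chain of a tree of ${\cal T}_\bullet$ pulls back to the second generator, while an associativity move among the remaining branch vertices pulls back to the first; hence $\natural^{-1}$ respects $\sim$ as well, and the two induced maps are mutually inverse, so $\Tasc\to\Tasr$ is a bijection.

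The main obstacle I anticipate is the bookkeeping in this second stage: one must make sure that every instance of round associativity in a tree of ${\cal T}_\bullet$ is accounted for under $\natural^{-1}$ by correctly distinguishing the vertices lying on the leftmost chain, those issued from the square-shaped roots, whose re-association encodes the second relation of \eqref{eq: Ras}, from the purely internal round-shaped vertices, whose re-association encodes the first. Conceptually this is transparent because $\natural$ commutes with the contraction map $c$ (Proposition \ref{prop: c et becarre}) and $R_{As}$-equivalence on $\cal T_{2,1}$ is detected by equality of $c$-images; should I choose to route the argument through $c$, the bijection on quotients would follow formally from $c\natural=\natural c$ and the injectivity of $\natural$, and the only remaining verification would be that two elements of $\cal T_{2,1}$ are $R_{As}$-equivalent precisely when they share the same image under $c$.
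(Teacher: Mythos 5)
Your proposal is correct and follows essentially the same route as the paper, whose proof is the one-line assertion that $\natural$ restricts to a poset bijection $\cal T_{\sq}\cap\cal T_{2,1}\rightarrow \cal T_{\bullet}\cap\cal T_{2,1}$ compatible with $R_{As}$; your two stages simply supply the details behind that assertion (arity bookkeeping for the restriction, and the check that $\natural$ turns the second relation of \eqref{eq: Ras} into a single instance of the first, with the inverse analysis for $\natural^{-1}$). No discrepancy to report.
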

 
 \begin{proof} $\natural$ induces a bijection $ \cal T_{\sq}\cap {\cal T}_{2,1} \rightarrow \cal T_{\bullet}\cap \cal T_{2,1}$ of posets, compatible with the equivalence relation $R_{As}$.
 \end{proof}

\subsubsection{The poset $\Tas$ and the blow-up components}

\begin{defn}\label{def: completion}

We begin by considering $S$ as a {linearly} ordered or a {cyclicaly} ordered set and let
 \begin{equation}\label{eq: map F}
F: \Nest(S)\to \Nest(S) 
\end{equation}
be the map that sends $\cal{A}=\{A_t\}_{t\in K}$ to the subset of its maximal elements (for the inclusion of strict intervals). Since $\cal A$ is nested, then $F\cal A$ contains disjoint strict intervals.
For a tree $T$, one extends $F$ to a map $F:\Nest(T)\rightarrow \Nest(T)$ defined by 
\[F(\prod_v {\cal A_v})=\prod_v F(\cal A_v)\] 

  A nested element $\cal{A}=\{A_t\}_{t\in K}\in \Nest(S)$ is said to be \emph{restrictive} if, for each $t\in K$,  either $|A_t|=2$ or, there exists $r\in K\setminus \{t\}$ such that 
  $A_t\setminus A_r\in \cal A$ or $A_t\setminus A_r= \{p\}$ for some $p\in S$. 
 Let us denote by $\Nest_{2}(S)$ the subset of $\Nest(S)$ formed by restrictive nested elements.
 
 \end{defn}

Let $\Nestas(S)$ be the following poset. 
 As a set, one has $\Nestas(S):=\Nest_{2}(S)/ \sim$ with
  \begin{equation*}
 \cal{A}\sim\cal{B} \Leftrightarrow F\cal{A}=F\cal{B}, 
 \end{equation*}
 The poset structure on $\Nestas(S)$ is induced by that of $\Nest_2(S)$: $[\cal{B}]\leq [\cal{A}]$ if and only if for  all representative set $\cal{A}_a$ of $[\cal{A}]$ there exists a representative set $\cal{B}_b$ of $[\cal{B}]$ such that  $\cal{B}_b\leq \cal{A}_a$ in $\Nest_2(S)$.

 Let $\cal A\in \Nest(S);$ by adding strict sub-intervals of intervals in $\cal A$ one obtains $\cal A'\in \Nest_2(S)$; the class $[\cal A']\in\Nestas(S)$ is independent from the sub-intervals added and is called the  {\it completion of $\cal A$} and denoted by $[\widehat{\cal A}]$.
  
 \begin{ex}
  For $n\geq 3$, the element $\cal{A}=\{[\lft, 3]\}$ is not in $\Nest_2(I(n))$. We can choose for representative of $[\widehat{\cal A}]$ either
  $\cal{B}=\{[\lft, 3],[\lft,1], [2,3]\}$ or $\cal{C}=\{[\lft, 3],[\lft,2], [1,2]\}.$
 \end{ex}

Notice that if $T\in\mathcal T_{2,1}$, then $\Nest(I_v)=\{\emptyset\}$ for every neutral vertex $v$ of $T$ so that 
\[\Nest(T)=\prod_{v| c_v\in\{\cl,\op\}} \Nest(I_v).\]
Restricting this set to $\prod_v \Nest_2(v)$ and modding out by the equivalence relation, the set $\prod_v \Nestas(I_v)$ is independent from a representative of $[T]$ so that one can define
\[\Nestas([T])=\prod_{v} \Nestas(I_v).\]
The bijections $D_{c_v}$ defined in \eqref{eq: bij vertex}
 restrict to poset bijections
\begin{equation*}
 D_{c_v}:\Nest_2(I_v)\to (\mathcal{T})_{\leq \Cor_v}\cap \mathcal T_{2,1},
\end{equation*}
inducing poset bijections
 \begin{equation*}
  D^{As}_{c_v}: \Nestas(I_v)\to (\Tas)_{\leq[Cor_v]} .
 \end{equation*}
 Similarly, for $\cal A_v\in \Nest_2(T)$ the class $[d_v^{\cal{A}_v}(T)]$ is independent from representatives of $[T]$ and  $[\cal A_v]$, hence defines maps
 \begin{equation}
  d_v^{[\cal{A}_v]}([T]):=[T]\circ_v^{\Tas} D^{As}_{c_v}([\cal A_v]):=[T\circ_v^{\mathcal{T}} D_{c_v}(\cal{A}_v)],
 \end{equation}
and

  \begin{equation*}
 \begin{split}
\gamma^{\Tas}_{[T]}:   \Nestas([T]) &\to (\Tas)_{\leq [T]} \\
[\cal{A}]& \mapsto   d^{[\cal{A}]}([T])=\gamma^{\Tas}([T];D_{c_1}([\cal{A}_{v_1}]),...,D_{c_k}([\cal{A}_{v_k}])) , 
 \end{split}
 \end{equation*}

\begin{rem}
 Given a closed vertex $v$ of $[T]\in \Tas$, the blow-up $d_v^{[i,j]}([T])$ corresponds to the composition of the $j-i$ face maps, $d_v^{i} \cdots d_v^i([T])$. 
\end{rem}

\begin{prop}\label{prop: bij Tas no neutral} The map $\gamma^{\Tas}_{[T]}$  is a bijection of posets. Hence $<(\Tas)_{\leq [T]}>$ is contractible for every element $[T]\in\Tas$.
\end{prop}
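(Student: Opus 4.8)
The plan is to adapt, essentially word for word, the argument of Proposition~\ref{prop: bij T no neutral}, pushing it through the equivalence relation $R_{As}$. All the substantive inputs are already available: the vertex-wise maps $D^{As}_{c_v}\colon \Nestas(I_v)\to (\Tas)_{\leq [\Cor_v]}$ have just been shown to be bijections of posets, and by Remark~\ref{rem: simplicial} the blow-up components at two distinct vertices commute, $d_v^i d_w^j = d_w^j d_v^i$ for $v\neq w$. The role of this section's preparatory definitions is precisely to make the substitution data descend to classes, so that what remains is formal.

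First I would note that, by the very definition of $\gamma^{\Tas}_{[T]}$, the class $d^{[\cal A]}([T])$ is obtained by performing the substitutions $D^{As}_{c_v}([\cal A_v])$ at each non-neutral vertex $v$ of $[T]$. Since these substitutions commute, they can be carried out independently, so that $\gamma^{\Tas}_{[T]}$ is the composite of the product map $\prod_v D^{As}_{c_v}$ with the iterated operadic composition $\circ_v^{\Tas}$ along the vertices of $[T]$. As $\Nestas([T])=\prod_v \Nestas(I_v)$ carries the componentwise order and each $D^{As}_{c_v}$ is a poset bijection, one gets that $[\cal A]\leq [\cal B]$ in $\Nestas([T])$ if and only if $D^{As}_{c_v}([\cal A_v])\leq D^{As}_{c_v}([\cal B_v])$ for every $v$, if and only if $[T]\circ_v^{\Tas} D^{As}_{c_v}([\cal A_v])\leq [T]\circ_v^{\Tas} D^{As}_{c_v}([\cal B_v])$. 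This is exactly the assertion that $\gamma^{\Tas}_{[T]}$ is an isomorphism of posets.

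For the contractibility I would deduce from this poset isomorphism the tensor decomposition $<(\Tas)_{\leq [T]}> \cong \bigotimes_{v\mid c_v\in\{\cl,\op\}} <(\Tas)_{\leq [\Cor_v]}>$, where the neutral vertices contribute nothing because $\Nest(I_v)=\{\emptyset\}$ for them when $T\in\mathcal{T}_{2,1}$. Each factor is thus of the form $<(\Tas)_{\leq [M_{1,n}]}>$ or $<(\Tas)_{\leq [\Gamma_{1,n}]}>$, which by Proposition~\ref{prop: M1n simplex} equals $C_*^{cell}(\Delta^n)$, the simplicial chain complex of a contractible simplex. Since all these complexes are free over $\Z$, the Künneth theorem introduces no Tor contributions, so their tensor product is again contractible.

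The main obstacle, and the only genuinely delicate point, is well-definedness with respect to $R_{As}$: one must confirm that the product decomposition $\Nestas([T])=\prod_v \Nestas(I_v)$, the substitution maps $d_v^{[\cal A_v]}$, and the order on $\Nestas(I_v)$ all descend coherently to the classes $[T]$ and $[\cal A_v]$. Concretely, replacing $T$ by a representative related through the relations \eqref{eq: Ras}, or replacing a restrictive nested family by another with the same completion $[\widehat{\cal A}]$, must not alter the resulting class in $(\Tas)_{\leq [T]}$. Most of this has been anticipated in the construction of $\Nestas([T])$ and of the induced bijections $D^{As}_{c_v}$, so the remaining verification is routine compatibility checking rather than new combinatorics.
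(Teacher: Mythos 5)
Your proposal is correct and follows essentially the same route as the paper: the paper's proof is literally a one-line reference back to Proposition \ref{prop: bij T no neutral}, whose argument is exactly your chain of equivalences through the vertex-wise poset bijections $D^{As}_{c_v}$, followed by the tensor decomposition of $<(\Tas)_{\leq [T]}>$ into cells $<(\Tas)_{\leq [M_{1,n}]}>$ and $<(\Tas)_{\leq [\Gamma_{1,n}]}>$, each contractible by Proposition \ref{prop: M1n simplex}. Your explicit attention to well-definedness under $R_{As}$ and the K\"unneth justification are details the paper leaves implicit (they are absorbed into the preparatory definitions of $\Nestas([T])$ and $d_v^{[\cal{A}_v]}$), but they do not change the argument.
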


\begin{proof} The proof is similar to the proof of Proposition \ref{prop: bij T no neutral}
\end{proof}

\subsection{Cellular decompositions}\label{sec: cell decompo}

We provide a cellular decomposition of each complex $\rBr(I;x)$ and $\RS(I;x)$ over the same poset $\Tas(I;x)$.

\begin{nota}
 The category of chain complexes of $\Z$-modules  is denoted by $\Ch$. 
 We consider the projective model structure on $\Ch$, see \cite[Section 2.3]{Hov}, where weak equivalences are quasi-isomorphisms and cofibrations are monomorphisms with projective cokernels.
 A complex $C\in \Ch$ is said \emph{contractible} if it is homotopic to the trivial complex $\Z$.  

 The \emph{$\delZ$-realization} is given by $|-|:=-\ot_{\triangle}\delZ$ where $\delZ: \triangle\to \Ch$  is defined by $\delZ([n])=C_*(\Hom(-,[n]);\Z)$. 
 \end{nota}

We refer to \cite{Batanin-Berger-Lattice} for the following definition.
 
 \begin{defn}\label{de: cell decompo}
  Let $\cal{A}$ be a poset and $C\in \Ch$ be a non-negatively graded chain complex. 
  We say that $C$ admits an $\cal{A}$-\emph{cellular decomposition} if there is a functor $\Theta:\cal{A}\to \Ch$ such that:
\begin{enumerate}
 \item $C\cong \colim_{\alpha \in \cal{A}} \Theta(\alpha)$;\label{item 1}
 \item  $\forall\alpha \in \cal{A} , ~\colim_{\beta<\alpha}\Theta(\beta)\rightarrow \Theta(\alpha)$ is a cofibration;\label{item 3}
 \item for each $\alpha\in \cal{A}$, the ``cell''  $\Theta(\alpha)$ is contractible.\label{item 4}
\end{enumerate}
 \end{defn}

\begin{lem}\label{lem: weak equiv BA X}
 Let $C$ be a chain complex with an $\cal{A}$-cellular decomposition. We have the weak equivalences 
  \begin{center}
\scalebox{1}{
\begin{tikzpicture}[>=stealth,thick,draw=black!50, arrow/.style={->,shorten >=1pt}, point/.style={coordinate}, pointille/.style={draw=red, top color=white, bottom color=red},scale=0.8]
\matrix[row sep=5mm,column sep=8mm,ampersand replacement=\&]
{
 \node (00) {$C\cong \emph{colim}_{\alpha \in \cal{A}} \Theta(\alpha)$};  \& \node (01){$\emph{hocolim}_{\alpha \in \cal{A}} \Theta(\alpha)$} ; \& \node (10){$\emph{hocolim}_{\alpha \in \cal{A}} (\Z)\cong \Ba \cal{A}$,} ;\\
}; 
\path
   	  (00)     edge[above,<-]      node {$\sim$}  (01)
 	  (01)     edge[above,->]      node {$\sim$}  (10)
 	  ; 
\end{tikzpicture}}
\end{center}
where $\Ba \cal{A}$ denotes the $\delZ$-realization of the nerve of the category $\cal{A}$.
\end{lem}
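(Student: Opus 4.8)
The plan is to realize both arrows as standard facts about homotopy colimits in the projective model structure on $\Ch$, feeding in the three conditions of Definition~\ref{de: cell decompo} one at a time. First I would note that in our applications $\cal{A}=\Tas(I;x)$ is a finite poset, hence a direct category for the degree function $\alpha\mapsto$ (length of a maximal chain in $(\cal{A})_{\leq\alpha}$); for a direct category the projective and Reedy model structures on $\Ch^{\cal{A}}$ coincide. The latching object of $\Theta$ at $\alpha$ is by definition $L_\alpha\Theta=\colim_{\beta<\alpha}\Theta(\beta)$, so the middle condition of Definition~\ref{de: cell decompo} asserts exactly that every latching map $L_\alpha\Theta\to\Theta(\alpha)$ is a cofibration, i.e. that $\Theta$ is a cofibrant object of $\Ch^{\cal{A}}$.

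For the left-hand weak equivalence I would invoke the standard fact that, for a cofibrant diagram, the canonical (Bousfield--Kan) comparison map $\hocolim_{\cal{A}}\Theta\to\colim_{\cal{A}}\Theta$ is a weak equivalence (see \cite{Batanin-Berger-Lattice} and \cite{Hov}). Together with the isomorphism $C\cong\colim_{\cal{A}}\Theta$ from the first condition of Definition~\ref{de: cell decompo}, this produces the displayed weak equivalence $\hocolim_{\cal{A}}\Theta\xrightarrow{\sim}C$.

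For the right-hand weak equivalence, the contractibility condition gives that each cell $\Theta(\alpha)$ is quasi-isomorphic to $\Z$. The point is to promote this to a natural transformation $\epsilon\colon\Theta\Rightarrow\underline{\Z}$ to the constant diagram which is an objectwise quasi-isomorphism; granting this, homotopy invariance of $\hocolim$ (it carries objectwise weak equivalences to weak equivalences) shows that $\hocolim_{\cal{A}}\epsilon$ is a weak equivalence. It then remains to identify the target: by the Bousfield--Kan formula $\hocolim_{\cal{A}}\underline{\Z}$ is the realization of the simplicial chain complex $[n]\mapsto\bigoplus_{\alpha_0\leq\cdots\leq\alpha_n}\Z$, i.e. the $\delZ$-realization of the nerve of $\cal{A}$, which is exactly $\Ba\cal{A}$. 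This yields $\hocolim_{\cal{A}}\Theta\xrightarrow{\sim}\Ba\cal{A}$.

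The step requiring the most care is the construction of the natural augmentation $\epsilon\colon\Theta\Rightarrow\underline{\Z}$, since $\Z$ is not terminal in $\Ch$ and bare contractibility does not by itself produce a natural map. Here I would use the explicit description of the cells obtained in Propositions~\ref{P:corolles}, \ref{prop: bij T no neutral} and \ref{prop: bij Tas no neutral}: each $\Theta(\alpha)$ is the cellular chain complex of a product of Stasheff polytopes, cyclohedra and simplices, and the structure maps $\Theta(\alpha)\to\Theta(\beta)$ are cellular. Such maps are automatically compatible with the canonical augmentation to the cellular chains on a point, so $\epsilon$ is natural and objectwise a quasi-isomorphism. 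The remaining identifications are the routine Bousfield--Kan computations above, which I would only sketch.
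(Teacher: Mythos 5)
Your argument is, in substance, a careful unpacking of the two citations that constitute the paper's entire proof: the paper simply declares the lemma to be \cite[Section 3.1]{Batanin-Berger-Lattice} ``reformulated in the chain complexes framework'', with a pointer to \cite[Theorem 19.9.1]{Hirschhorn}. Your left-hand equivalence ($\Tas(I;x)$ is a direct category, Reedy $=$ projective, the second condition of Definition \ref{de: cell decompo} is exactly Reedy cofibrancy of $\Theta$, hence $\hocolim\Theta\to\colim\Theta\cong C$ is a weak equivalence) is precisely what the Hirschhorn citation delivers, so there is no divergence there. Where you genuinely add content is the right-hand equivalence, and the point you flag is not pedantry but the crux: in $\mathbf{Top}$ the one-point space is terminal, so a diagram with contractible cells maps naturally to the constant diagram for free, whereas in $\Ch$ the complex $\Z$ is not terminal, and without a natural augmentation the lemma \emph{as stated} is false. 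Indeed, take for $\cal{A}$ the four-element poset $\{a,b,c,d\}$ with $a<c$, $a<d$, $b<c$, $b<d$, whose nerve is a circle; set $\Theta(a)=\Theta(b)=\Z$, let $\Theta(c)$ and $\Theta(d)$ both be the complex $\Z\to\Z^{2}$, $x\mapsto(x,-x)$ (cellular chains of an interval), and let the structure maps send the generators of $\Theta(a),\Theta(b)$ to the two degree-zero generators of the target, except that $\Theta(b)\to\Theta(d)$ carries a sign $-1$. All three conditions of Definition \ref{de: cell decompo} hold (the latching maps are split monomorphisms with projective cokernel concentrated in degree one), yet $\colim\Theta$ has $H_0=\Z/2$ and $H_1=0$ --- it computes the homology of $S^1$ with sign-twisted coefficients --- while $\Ba\cal{A}\simeq C_*(S^1)$ has $H_0=H_1=\Z$; since this $\Theta$ is Reedy cofibrant, $\hocolim\Theta\simeq\colim\Theta$, so no zig-zag of weak equivalences can exist. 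Thus the compatibility of augmentations that you extract from the polytopal description of the cells (Propositions \ref{P:corolles}, \ref{prop: bij T no neutral}, \ref{prop: bij Tas no neutral}) is exactly the supplementary input the paper's citation leaves implicit, and your proof establishes the lemma in the form in which it is actually applied.

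Two smaller points about your implementation. First, for the braces side the cell $\Theta_\infty([T])$ is not the cellular chain complex of a single product of Stasheff polytopes, cyclohedra and simplices: it is the colimit (a union inside $\rBr(I;x)$) of the subcomplexes $C_a=\,<(\cal{T})_{\leq a}>$ over $a\in\mathcal{P}^{[T]}$, and only each $C_a$ is such a product. Your naturality argument survives because the augmentation is defined uniformly on tree generators (degree-zero generators go to $1$, all others to $0$) and every structure map in sight sends generators to generators, but it should be phrased for these unions rather than for single products. Second, you should state explicitly that what you prove is the lemma for cellular decompositions whose cells carry such compatible augmentations; by the example above, some restriction of this kind is not removable from the general statement.
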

\begin{proof}
 This is \cite[Section 3.1]{Batanin-Berger-Lattice} reformulated in the chain complexes framework. 
 See also \cite[Theorem 19.9.1]{Hirschhorn}. 
\end{proof}

\subsubsection{Cellular decomposition for the relative surjections}
Let 
\begin{align*}
\Theta:\Tas(I;x)&\to \Ch\\
 [T]&\mapsto \Theta([T]):= <(\Tas)_{\leq [T]}>.
 \end{align*}

 \begin{prop}
  $\Theta:\Tas(I;x)\to \Ch$ defines a $\Tas(I;x)$-cellular decomposition of $\RS(I;x)$.
 \end{prop}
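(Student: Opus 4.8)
The plan is to verify the three conditions of Definition \ref{de: cell decompo} for the functor $\Theta$, observing first that $\Theta$ is well defined: since the differential in $\RS$ sends a generator $[T]$ to a signed sum of blow-up components $d_v^i([T])\leq [T]$, each $<(\Tas)_{\leq [T]}>$ is genuinely a subcomplex of $\RS(I;x)$, and for $[T']\leq [T]$ the evident inclusion $<(\Tas)_{\leq [T']}>\hookrightarrow <(\Tas)_{\leq [T]}>$ is a chain map, so $\Theta$ is a functor into $\Ch$. With this in place, condition \eqref{item 4} is immediate: for each $[T]$ the cell $\Theta([T])=<(\Tas)_{\leq [T]}>$ is contractible by Proposition \ref{prop: bij Tas no neutral}.

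For condition \eqref{item 1} I would isolate the following elementary fact about free complexes indexed by a poset, which does all the work. For any poset $\mathcal Q$, the functor $[S]\mapsto <(\mathcal Q)_{\leq [S]}>$ sending each element to the subcomplex freely generated by its principal down-set has colimit (computed degreewise in $\Ch$) the free complex $<\mathcal Q>$ on all of $\mathcal Q$, with structure maps the inclusions. This is verified on the universal property: a compatible cone $(\psi_{[S]})$ must, by the cone relation applied to the top generator of each $(\mathcal Q)_{\leq [S]}$, satisfy $\psi_{[T]}([S])=\psi_{[S]}([S])$ whenever $[S]\leq [T]$; hence the comparison map $<\mathcal Q>\to M$ is forced to be $[S]\mapsto \psi_{[S]}([S])$, and this is the unique cone morphism. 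Applying this with $\mathcal Q=\Tas(I;x)$ yields $\colim_{[T]}\Theta([T])\cong <\Tas(I;x)>=\RS(I;x)$, which is condition \eqref{item 1}.

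For condition \eqref{item 3} I would first identify the source of the attaching map. Restricting the same fact to the strict lower set $(\Tas)_{<[T]}$, and using that for $[T']<[T]$ one has $(\Tas)_{\leq [T']}\subseteq (\Tas)_{<[T]}$ together with $\bigcup_{[T']<[T]}(\Tas)_{\leq [T']}=(\Tas)_{<[T]}$, gives $\colim_{[T']<[T]}\Theta([T'])\cong <(\Tas)_{<[T]}>$, the map to $\Theta([T])$ being the inclusion $<(\Tas)_{<[T]}>\hookrightarrow <(\Tas)_{\leq [T]}>$. This is a monomorphism of (finite, non-negatively graded) free $\Z$-modules whose cokernel is free on $(\Tas)_{\leq [T]}\setminus (\Tas)_{<[T]}=\{[T]\}$, a single copy of $\Z$ in degree $|[T]|$, hence projective; when $[T]$ is minimal the source is $0$ and the same conclusion holds. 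Therefore the attaching map is a monomorphism with projective cokernel, i.e.\ a cofibration in the projective model structure on $\Ch$, which is condition \eqref{item 3}.

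Once Proposition \ref{prop: bij Tas no neutral} is granted, the argument is essentially formal: the only genuine input beyond bookkeeping is the contractibility of the cells, and the poset-colimit fact above. The step demanding the most care is the identification of $\colim_{[T']<[T]}\Theta([T'])$ with $<(\Tas)_{<[T]}>$, since it is precisely this that forces the cokernel of each attaching map to be a single top generator $[T]$ — the combinatorial manifestation of ``attaching one cell''. This is where I expect the main (if modest) obstacle to lie, as it rests on the blow-up description of the order relation in $\Tas(I;x)$.
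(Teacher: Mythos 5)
Your proposal is correct and follows essentially the same route as the paper: the colimit condition is verified via the universal property with the forced assignment $g(a)=j_a(a)$ (your poset lemma is exactly the paper's argument, abstracted), and contractibility of the cells is quoted from Proposition \ref{prop: bij Tas no neutral}, just as the paper does. The only difference is that you prove the cofibration condition explicitly --- identifying $\colim_{[T']<[T]}\Theta([T'])$ with $<(\Tas)_{<[T]}>$ and observing that the cokernel of the attaching map is free on the single generator $[T]$ --- whereas the paper merely asserts it; this is a filling-in of detail rather than a different method.
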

\begin{proof}
The structural maps $\{i_a:\Theta(a)\to \RS(I;x)\}_{a\in \Tas(I;x)}$ are induced by the canonical inclusions 
$(\Tas)_{\leq a}\to \Tas(I;x)$. 
It follows that they are cofibrations. 
Notice that,  for each $a\leq b$, the inclusion $i_{a}^b: \Theta(a) \to \Theta(b)$ is a cofibration, as well as
the inclusions $\colim_{a<b} \Theta(a)\to \Theta(b)$.

Let us show that the complex $\RS(I;x)$ is isomorphic to $\colim_{[T]\in\Tas(I;x)}\Theta([T])$. By the universal property of the colimit,  it is sufficient to show that, given any complex $C$ together with a system of maps $\{j_{a}:\Theta(a)\to C\}_{a\in \Tas(I;x)}$ compatible with the morphisms of $\Tas(I;x)$,  there is a unique complex morphism $g:\RS(I;x) \to C$ making the usual diagram commute.

For $a$ a generator of  $\RS(I;x)$, since $a=i_a(a)$, set $g(a)=j_a(a)$ and extend it as a linear map. It follows that
$dg(a)=dj_a(a)=j_a(da)$ and $da=\sum_{j} \pm a_j$ where $\forall j$ $a_j\leq a$. Since $a_j=i_{a_j}^{a}(a_j)$, one has
$j_a(a_j)=j_{a_j}(a_j)$ so that $j_a(da)=\sum_{j} \pm j_{a_j}(a_j)=g(da)$. 

The contractibility of $\Theta(a)$ is given by Proposition \ref{prop: bij Tas no neutral}.
\end{proof}

 \subsubsection{Cellular decomposition for the relative braces}

 \newcommand{\cont}{\mathcal{P}^{[T]}}
  \newcommand{\contbecarre}{\mathcal{P}^{[\natural T]}}
 
 For $[T]\in \Tas$, we consider the sub poset of $\mathcal{T}$,
 \begin{equation*}
\cont=\{ c(T')\in \mathcal{T} ~|~ [T']\leq [T]\in \Tas \}.   
 \end{equation*} 
 Let us define  functors

\begin{equation*} 
  \begin{aligned}
  C^{[T]}: \cont &\to \Ch &\quad\text{ and }\quad  \Theta_\infty:\Tas(I;x)&\to \Ch \\
    a &\mapsto C_a:=<(\mathcal{T})_{\leq a}> & [T]& \mapsto \Theta_{\infty}([T]):= \colim_{a\in \cont} C_a. 
 \end{aligned}
\end{equation*}

\begin{prop}\label{prop: decompo braces}
  The functor $\Theta_\infty:\Tas(I;x)\to \Ch$ defines a $\Tas(I;x)$-cellular decomposition of $\rBr(I;x)$. 
 \end{prop}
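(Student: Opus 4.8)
The plan is to verify the three conditions of Definition \ref{de: cell decompo} for the functor $\Theta_\infty$ and the complex $\rBr(I;x)$, following the same scheme as in the case of $\RS(I;x)$ treated just above. The structural maps $\Theta_\infty([T])\to \rBr(I;x)$ are induced by the canonical inclusions of subcomplexes $C_a=<(\mathcal{T})_{\leq a}>\hookrightarrow \rBr(I;x)$ for $a$ in the poset $\cont$; since $\Theta_\infty([T])=\colim_{a\in\cont}C_a$ and all the $C_a$ are subcomplexes of $\rBr(I;x)$ related by inclusions, $\Theta_\infty([T])$ is identified with the subcomplex generated by $\bigcup_{a\in\cont}(\mathcal{T})_{\leq a}$.

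First I would check condition (1), that $\rBr(I;x)\cong\colim_{[T]\in\Tas(I;x)}\Theta_\infty([T])$, by the universal property of the colimit, exactly as in the proof for $\RS(I;x)$. The only new point is that every generator $T'$ of $\rBr(I;x)$ lies in some cell: taking a binary refinement $T''$ of the contraction $c(T')$ (so that $T''\in\mathcal{T}_{2,1}$ and $c(T'')=c(T')$), one has $T'\leq c(T')=c(T'')$, and $c(T'')\in\mathcal{P}^{[T'']}$, whence $T'\in <(\mathcal{T})_{\leq c(T'')}>\subseteq\Theta_\infty([T''])$. Uniqueness and compatibility of the induced map $g$ follow as before, since the differential of a tree is a signed sum of strictly smaller trees lying in the same generated subcomplex. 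Condition (2) is immediate: each $\colim_{[T']<[T]}\Theta_\infty([T'])\to\Theta_\infty([T])$ is an inclusion of subcomplexes that are free $\Z$-modules on sets of planar trees, hence a monomorphism with free, and so projective, cokernel, i.e. a cofibration in the projective model structure on $\Ch$.

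The heart of the matter, and the step I expect to be the main obstacle, is condition (3): the contractibility of each cell $\Theta_\infty([T])$. Here I would exploit that blow-ups at distinct vertices commute and that the contraction map $c$ of Definition \ref{def: contraction map} acts independently on the region grafted at each vertex, exactly the phenomenon underlying the product decomposition of Proposition \ref{prop: bij T no neutral}. Concretely, the poset $\cont$ should factor as the product $\prod_v \mathcal{P}^{[\Cor_v]}$ over the non-neutral vertices $v$ of $[T]$ (the neutral vertices of $[T]$, being binary, contribute trivially), compatibly with the functors $a\mapsto C_a$. Passing to colimits I would obtain an isomorphism of complexes
\[ \Theta_\infty([T])\;\cong\;\bigotimes_{v\,:\,c_v\in\{\cl,\op\}}\Theta_\infty([\Cor_v])\;=\;\bigotimes_{v}<(\mathcal{T})_{\leq \Cor_v}>, \]
where the last equality uses that for such a corolla $c(\Cor_v)=\Cor_v$, so that $\Cor_v$ is the maximum of $\mathcal{P}^{[\Cor_v]}$ and the colimit reduces to the single top cell.

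Finally, each factor $<(\mathcal{T})_{\leq \Cor_v}>$ is contractible by Proposition \ref{P:corolles}, being the cellular chain complex of an associahedron $K_{n+2}$ (for $c_v=\op$) or of a cyclohedron $W_n$ (for $c_v=\cl$), both of which are balls. A tensor product over $\Z$ of bounded-below contractible complexes of free modules is again contractible, which gives condition (3) and completes the verification. The delicate point throughout is precisely the factorization of the colimit $\Theta_\infty([T])$ along the non-neutral vertices: this requires checking that the contraction map $c$ and the relation $R_{As}$ are compatible with the independent blow-ups at different vertices, so that newly created neutral vertices are merged locally within each vertex's region. Once this compatibility is established the contractibility is formal.
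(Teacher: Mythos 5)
Your verification of conditions (1) and (2) follows the paper's scheme for $\RS(I;x)$, but even there you gloss over the one point that is genuinely not ``as before'': the well-definedness of the map $g$. A generator $u$ of $\rBr(I;x)$ can lie in two cells $\Theta_\infty(a)$ and $\Theta_\infty(b)$ with $a,b$ incomparable in $\Tas(I;x)$, and one must exhibit $c\leq a$, $c\leq b$ with $u\in\Theta_\infty(c)$; this is the content of Proposition \ref{prop: fonda} (blow up the neutral vertices of $u$ to obtain $P_u\in\mathcal{T}_{2,1}$ with $u\leq c(P_u)$, $[P_u]\leq a$ and $[P_u]\leq b$), which rests on Lemma \ref{lem: lem separation 1 new} and is not a formal consequence of ``the differential is a sum of smaller trees''.

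The genuine gap is in your condition (3). The claimed factorization $\mathcal{P}^{[T]}\cong\prod_v\mathcal{P}^{[\Cor_v]}$, and with it the isomorphism $\Theta_\infty([T])\cong\bigotimes_v<(\mathcal{T})_{\leq\Cor_v}>$, is false: the contraction $c$ does \emph{not} merge newly created neutral vertices locally within each vertex's region, which is exactly the compatibility you postpone to the end of your argument. Concretely, let $T\in\mathcal{T}_{2,1}$ be the tree whose closed root $r$ has two inputs, the first a closed vertex $w$ carrying two closed leaves, the second a closed leaf. The blow-up $d_w^{[2,0]}$ places a binary neutral vertex on the output edge of $w$, while $d_r^{[1,2]}$ places a binary neutral vertex above $r$ whose inputs are the $w$-subtree and the remaining leaf; in $T'=d_r^{[1,2]}d_w^{[2,0]}(T)$ these two neutral vertices are adjacent, so $c(T')\in\mathcal{P}^{[T]}$ has a single \emph{ternary} neutral vertex whose inputs are the vertex $w$ (with one leaf), the second leaf of $w$, and the leaf of $r$. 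Blowing up this ternary vertex so as to group the second leaf of $w$ with the leaf of $r$ yields an element of $(\mathcal{T})_{\leq c(T')}$, hence of $\Theta_\infty([T])$, which is not a grafting of a tree in $(\mathcal{T})_{\leq\Cor_r}$ with a tree in $(\mathcal{T})_{\leq\Cor_w}$; thus $\Theta_\infty([T])$ is strictly larger than your tensor product (this cross-vertex merging is precisely the phenomenon visible in Figure \ref{fig: zero annulation 3}). Your product decomposition does hold cell by cell --- that is Proposition \ref{prop: bij T no neutral} for a single $(\mathcal{T})_{\leq a}$ --- but it does not pass to the colimit over $\mathcal{P}^{[T]}$. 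The paper's proof is organized around exactly this failure: it covers $\Theta_\infty([T])$ by the cells $<(\mathcal{T})_{\leq a}>$, $a\in\mathcal{P}^{[T]}$, proves via the separation Lemma \ref{lem: zero} and Proposition \ref{prop: PT contract} that every non-empty intersection of these cells (and of these cells with $(\mathcal{T})_{\leq c(T)}$) is again of the form $<(\mathcal{T})_{\leq P}>$, hence contractible by Proposition \ref{prop: bij T no neutral}, and then applies the Borsuk--Bj\"orner nerve theorem, the nerve of the covering being a cone with apex $c(T)$. Without a replacement for this step, your contractibility argument does not go through.
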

\begin{proof}
See Section \ref{sec: proof closed}. 
\end{proof}

\begin{thm} \label{th: equiv}
The operad $\rBr$ is weakly equivalent to the operad of chains with coefficients in $\Z$ of the operad $\SC_2$.
\end{thm}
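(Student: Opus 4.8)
The plan is to deduce the statement from the morphism $\Phi\colon\rBr\to\RS$ of Lemma~\ref{lem: morph dg-operad}, combined with the fact, proved by the third author in \cite{Q-SCMRL}, that $\RS$ is an $\sc_2$-operad, \ie that $\RS$ is related to the operad of chains $C_*(\SC_2;\Z)$ by a zig-zag of quasi-isomorphisms of dg-operads. Concatenating that zig-zag with $\Phi$ reduces the whole problem to showing that $\Phi$ is itself a weak equivalence of dg-operads. Since weak equivalences of operads are detected arity-wise, it suffices to prove that for every sequence $I$ in $\{\cl,\op\}$ and every color $x\in\{\cl,\op\}$ the chain map $\Phi\colon\rBr(I;x)\to\RS(I;x)$ is a quasi-isomorphism.

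First I would compare both complexes to a common model. By the cellular decomposition of $\RS(I;x)$ established in Section~\ref{sec: cell decompo}, the functor $\Theta\colon\Tas(I;x)\to\Ch$, $[T]\mapsto\,<(\Tas)_{\leq[T]}>$, is a $\Tas(I;x)$-cellular decomposition of $\RS(I;x)$, and by Proposition~\ref{prop: decompo braces} the functor $\Theta_\infty\colon\Tas(I;x)\to\Ch$, $[T]\mapsto\colim_{a\in\cont}C_a$, is a $\Tas(I;x)$-cellular decomposition of $\rBr(I;x)$; crucially, both are indexed by the \emph{same} poset $\Tas(I;x)$. Applying Lemma~\ref{lem: weak equiv BA X} to each of them produces zig-zags of weak equivalences
\[ \RS(I;x)=\colim_{[T]\in\Tas(I;x)}\Theta([T]) \xleftarrow{\ \sim\ }\hocolim_{[T]}\Theta([T])\xrightarrow{\ \sim\ }\Ba\,\Tas(I;x), \]
and likewise for $\Theta_\infty$, so that both $\RS(I;x)$ and $\rBr(I;x)$ are weakly equivalent to the $\delZ$-realization $\Ba\,\Tas(I;x)$ of the nerve of the indexing poset.

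It then remains to check that $\Phi$ is compatible with these comparisons. Geometrically, $\Phi$ is the contraction that collapses every Stasheff polytope to a point and sends the cyclohedron onto the simplex; I would formalize this by showing that $\Phi$ induces a natural transformation $\Theta_\infty\Rightarrow\Theta$ of diagrams over $\Tas(I;x)$, whose value at $[T]$ is a morphism of cells $\colim_{a\in\cont}C_a\to\,<(\Tas)_{\leq[T]}>$. Each such cell is contractible — the source by Proposition~\ref{prop: bij T no neutral} and the target by Proposition~\ref{prop: bij Tas no neutral} — so every component is a map between contractible complexes, hence a quasi-isomorphism. An objectwise weak equivalence of diagrams over a common poset induces a weak equivalence on homotopy colimits, and since both projections onto $\Ba\,\Tas(I;x)$ are induced by the contractions of the cells to $\Z$, the transformation $\Theta_\infty\Rightarrow\Theta$ is carried to the identity of $\Ba\,\Tas(I;x)$. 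Passing through the two zig-zags above, $\Phi(I;x)$ is thereby identified with a quasi-isomorphism, which completes the argument.

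The technical heart is twofold. The deferred Proposition~\ref{prop: decompo braces}, proved in Section~\ref{sec: proof closed}, is the genuinely laborious input: it rests on the combinatorial analysis of the posets $(\mathcal T)_{\leq T}$ and $\cont$ through nested intervals and on the identification of the cells with cellular chains of associahedra and cyclohedra. The remaining, and I expect the main, obstacle is to verify that $\Phi$ truly defines the natural transformation $\Theta_\infty\Rightarrow\Theta$: one must check that $\Phi$ is compatible with the contraction map $c$ entering the definition of $\cont$ — so that $\Phi$ restricted to a cell $\Theta_\infty([T])$ lands inside $\Theta([T])$ — and that the orientations encoded by the determinants $\det(T)$ are matched on the nose, for the closed vertices (cyclohedra $\mapsto$ simplices), for the open vertices, and for their interaction.
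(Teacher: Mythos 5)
Your proposal is correct and follows essentially the same route as the paper's proof: reduce via \cite{Q-SCMRL} to showing that $\Phi$ is an arity-wise quasi-isomorphism, verify that $\Phi$ induces a natural transformation $\Theta_\infty\Rightarrow\Theta$ of cellular decompositions over the common poset $\Tas(I;x)$, and conclude by contractibility of the cells, homotopy invariance of homotopy colimits, and Lemma \ref{lem: weak equiv BA X}. The only slip is a citation: the contractibility of the source cell $\Theta_\infty([T])$ (a colimit of complexes $C_a$, not a single $<(\mathcal{T})_{\leq T}>$) is not given by Proposition \ref{prop: bij T no neutral} but is part of Proposition \ref{prop: decompo braces}, established via the nerve theorem in Section \ref{sec: proof closed}.
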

\begin{proof}
 By \cite{Q-SCMRL}, it is sufficient to show that $\Phi: \rBr\to \RS$ is a weak equivalence of dg-operads. 
 In order to do this we prove that for every $(I;x)$ the map of chain complexes $\Phi(I;x):\rBr(I;x)\to\RS(I;x)$ is a quasi-isomorphism. Let us prove that $\Phi$ induces a natural transformation of functors from $\Theta_\infty$ to $\Theta$,
 that is for each $(I;x)$,  the map $\Phi_{I;x}: \rBr(I;x)\to \RS(I;x)$ satisfies $\Phi_{I;x}(\Theta_\infty([T]))\subset\Theta([T]),\forall [T]\in \Tas(I;x)$ and commutes with the inclusion maps.
 Indeed any generator $a\in \Theta_\infty([T])$ 
 can be viewed as an element in $(\cal T)_{\leq c(T')}$  with $[T']\leq [T]$. If $a\not\in \cal T_{2,1}$ then $\Phi_{I;x}(a)=0$. If 
 $a\in \cal T_{2,1}$ then $a$ is obtained by blowing up the neutral vertices of $c(T')$ so that $[a]\leq [T']\leq [T]$. Hence $\Phi_{I;x}(a)\in\Theta([T])$. If $[T]\leq [T']$, then the structure diagram commutes. We denote by $\Phi_{[T]}$ the 
 induced map $\Theta_\infty(T)\to\Theta([T])$.
 complexes 
 
 The cells $\Theta_\infty([T])$ and $\Theta([T])$ being contractible, it follows that $\Phi_{[T]}$ is a quasi-isomorphism for every $[T]$.
 By homotopy invariance we deduce that $\Phi$ induces a quasi-isomorphism
 $\hocolim_{[T]} \Theta_\infty([T])\to\hocolim_{[T]}  \Theta([T]).$

 Thus, $\Phi_{I;x}$ is a weak equivalence of complexes by Lemma \ref{lem: weak equiv BA X}. 
 \end{proof}

\subsection{Proof of  the cellular decomposition of $\rBr$ (Proposition \ref{prop: decompo braces})}\label{sec: proof closed}

\subsubsection{Decomposition as a colimit}

We prove the existence of an isomorphism
\begin{equation}\label{eq: iso colim}
 \rBr(I;x)\cong \colim_{\Tas(I;x)} \Theta_\infty. 
\end{equation}

 Let us begin by a few preliminary considerations.

\begin{nota}  The tree obtained by grafting $k$ trees $\alpha_1,\ldots,\alpha_k$  onto a vertex $v$ is denoted by $B(v;\alpha_1,\ldots,\alpha_k)$. Notice that every tree can be written
in that way.
\end{nota}

 \begin{lem}\label{lem: lem separation 1 new} Let $T\in\ \mathcal{T}$.
 \begin{enumerate}
 \item  there exists $T' \in  \mathcal{T}_{2,1}$ such that $T\leq c(T')$. If $T\not\in \cal T_{\sq}$, then $T'$ can be chosen with no neutral vertices so that $c(T')=T'$.
  \item If $T\in \cal{T}_{2,1}\setminus \cal T_{\sq}$,  there exists a tree $T'\in \cal{T}_{2,1}$ with no neutral vertices such that $T\leq T'$ and
  \[ T=d^{\cal{A}}(T'),\ c(T)=d^{F\cal{A}}(T') \text{ with }  \cal{A}\in \Nest_2(T').\]
\end{enumerate}
 \end{lem}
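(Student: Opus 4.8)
The plan is to deduce both assertions from the poset bijection $\gamma^{\cal T}_{T'}\colon \Nest(T')\to(\cal T)_{\leq T'}$ of Proposition \ref{prop: bij T no neutral}, applied to a suitably chosen neutral-free tree $T'$, together with a comparison between the contraction map $c$ and the map $F$ which keeps only the maximal intervals. Throughout I use that $c$ merely merges adjacent neutral vertices, so it never moves down in the poset: $T\leq c(T)$ always.

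\emph{Construction of $T'$ and part (a).} For the first assertion I would blow up every neutral vertex of $T$ to a binary one, producing $T'\in\mathcal{T}_{2,1}$ with $T'\leq T$ and the same clusters of adjacent neutral vertices as $T$; then $c(T')=c(T)\geq T$, which gives $T\leq c(T')$. When $T\notin\cal T_{\sq}$ the tree $T$ carries at least one labelled (closed or open) vertex, and I would remove its neutral vertices one at a time by undoing the blow-ups \eqref{deltaclosed}, \eqref{deltaopen}, \eqref{deltaneutralsquare}: a neutral vertex directly above a labelled vertex is reabsorbed by the reverse of the grouping component, and one below a labelled vertex by the reverse of the root component, the presence of a labelled vertex guaranteeing that some absorption is available at each step. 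Iterating yields a neutral-free $T'\in\mathcal{T}_{2,1}$ with $T\leq T'=c(T')$. The case $T\in\cal T_{\sq}$ is separate: writing $m$ for the total number of closed inputs, only the word-splitting component of \eqref{deltaneutralsquare} is needed to see $T\leq G_m$, so any full blow-up $T'$ of $G_m$ inside $\mathcal{T}_{2,1}$ satisfies $c(T')=G_m\geq T$, as required.

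\emph{Part (b).} Let $T\in\mathcal{T}_{2,1}\setminus\cal T_{\sq}$ and let $T'$ be the neutral-free tree just produced, so $T\leq T'$. By Proposition \ref{prop: bij T no neutral}, $T=d^{\cal A}(T')$ for a unique $\cal A=\prod_v\cal A_v\in\Nest(T')$. Since $T\in\mathcal{T}_{2,1}$ and the per-vertex bijections $D_{c_v}$ restrict to $\Nest_2(I_v)\to(\cal T)_{\leq\Cor_v}\cap\mathcal{T}_{2,1}$, each $\cal A_v$ is restrictive, whence $\cal A\in\Nest_2(T')$. To identify $c(T)$ I would use that, $T'$ being neutral-free, the substituted blow-ups $D_{c_v}(\cal A_v)$ attached at distinct vertices of $T'$ are separated by labelled vertices, so no neutral vertex of one is adjacent to a neutral vertex of another; consequently $c$ acts on each factor independently and $c(T)=\gamma^{\cal T}(T';\{c(D_{c_v}(\cal A_v))\}_v)$. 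The whole statement then reduces to the single-vertex identity $c\circ D_{c_v}=D_{c_v}\circ F$ for $c_v\in\{\cl,\op\}$, which yields $c(T)=d^{F\cal A}(T')$.

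\emph{The per-vertex identity (main obstacle).} The step I expect to be most delicate is $c(D_{c_v}(\cal A_v))=D_{c_v}(F\cal A_v)$. Geometrically $D_{c_v}$ turns a nested family into a cluster of nested neutral vertices, one per interval, each grouping the elements of that interval around the labelled vertex $v$; contracting then merges each maximal chain of nested neutral vertices into its outermost member, which is exactly the effect of discarding the non-maximal intervals, i.e.\ of $F$. I would verify this directly from the explicit bijections: for $c_v=\op$ through $D_\op=\flat^{-1}\circ\kappa$ and the relations tying $\flat$ to the blow-up components, and for $c_v=\cl$ through $\Upsilon$ of Proposition \ref{P: bij cyclohedra}, where extra care is needed because $I_v=S(n)$ is cyclically ordered and a maximal interval through $0$ corresponds to a neutral vertex on the root side of $v$ rather than above it. In every case the labelled vertex $v$ is itself never merged, so contraction stabilises precisely the maximal-interval skeleton $F\cal A_v$, completing the argument.
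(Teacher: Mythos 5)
Your argument for part (b) has a genuine gap, and the gap is fatal for the particular $T'$ you construct. The separation claim --- ``$T'$ being neutral-free, the substituted blow-ups $D_{c_v}(\cal{A}_v)$ attached at distinct vertices of $T'$ are separated by labelled vertices, so no neutral vertex of one is adjacent to a neutral vertex of another'' --- is false, because for a closed vertex $w$ the family $\cal{A}_w$ may contain an interval through $0$, and by \eqref{deltaclosed}/\eqref{eq: notation bu cl} that interval produces a neutral vertex \emph{below} $w$, sitting on the edge joining $w$ to its parent $v$; if $\cal{A}_v$ contains the interval covering the input slot of $w$, its neutral vertex sits just \emph{above} $v$ on the same edge, and the two neutral vertices are adjacent. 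Concretely, let $T'$ be the neutral-free closed tree whose root $v$ is a closed vertex with two inputs, the first being a closed vertex $w$ with two closed leaves, the second a closed leaf, and take $\cal{A}_v=\{[1,2]\}$, $\cal{A}_w=\{[2,0]\}$ (both restrictive, both intervals of size two). Then $T=d^{\cal{A}}(T')\in{\cal T}_{2,1}$ has two adjacent neutral round vertices, so $c(T)$ merges them into a single ternary neutral vertex; but $F\cal{A}=\cal{A}$ since each family is a single (hence maximal) interval, so $d^{F\cal{A}}(T')=T\neq c(T)$. Moreover this $T'$ is a legitimate output of your part (a) absorption procedure applied to $T$ (absorb the lower neutral vertex into $w$ first, then the upper one into $v$), so with your choice of $T'$ the conclusion of (b) is simply false --- and even when the absorption order happens to produce a good $T'$, your justification via factor-by-factor contraction and the identity $c\circ D_{c_v}=D_{c_v}\circ F$ does not apply, because the intervals that must be discarded by $F$ then live at a \emph{single} vertex, not one per vertex. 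The same failure occurs in the open colour through the word-splitting components of \eqref{deltaopen}.

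What repairs it (and is the paper's route) is to apply part (a) not to $T$ but to $c(T)$: since $T\notin{\cal T}_{\sq}$ one has $c(T)\notin{\cal T}_{\sq}$, so there is a neutral-free $T'$ with $c(T)\leq T'$, hence $T\leq c(T)\leq T'$. Then Proposition \ref{prop: bij T no neutral} gives $T=d^{\cal{A}}(T')$ and $c(T)=d^{\cal{B}}(T')$; since $T$ is recovered from $c(T)$ by blowing up neutral vertices only, $\cal{A}$ is obtained from $\cal{B}$ by adjoining sub-intervals of its intervals, whence $\cal{B}=F\cal{A}$. In the example above this amounts to replacing your $T'$ by the tree where $v$ has the three inputs ($w$ with one leaf, leaf, leaf), and indeed $\cal{A}$ becomes the nested family $\{[1,2],[1,3]\}$ at $v$ alone, with $F\cal{A}=\{[1,3]\}$ giving $c(T)$. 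Separately, in part (a) your treatment of $T\in{\cal T}_{\sq}$ asserts $T\leq G_m$, which is false whenever $T$ has a closed vertex with inputs (every element of $({\cal T})_{\leq G_m}$ has all its closed vertices as leaves); fortunately that case is already covered by your uniform first construction, since $c(T')=c(T)\geq T$ holds there too, so only part (b) needs repair.
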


 \begin{proof}
 Proof of (a):
 
 i) Every neutral round-shaped vertex in a tree  $T\in \cal T\setminus \cat T_{\sq}$ may be seen as coming from the blow-up of a closed vertex, so that $T$  is the boundary of a tree with no neutral vertices.
 
 ii)  If $T\in\cal T(I;\op)$ and $T\not\in\cal T_{\sq}$ then $T$ writes
 $T=T_1\cdots T_{2p+1}$ where $T_{2k}$ is an open rooted tree and $T_{2k+1}$ is either the ``empty" word or a word of neutral square-shaped rooted trees. For $1\leq k<p,$ there exists an open rooted tree $T'_k$ such that $T_{2k-1}T_{2k}\leq T'_k$ and by i) we can choose $T'_k$ with no neutral vertices. Similarly $T_{2p-1}T_{2p}T_{2p+1}\leq T'_p$ with $T'_p$ an open rooted tree with no neutral vertices.
 Hence $T\leq T'=T'_1\ldots T'_p$ where $T' \in  \mathcal{T}_{2,1}$ has no neutral vertices.
 
 iii) If $T=T_1\ldots T_p\in \mathcal T_{\sq}$ then $T\leq c(T)=B(v;\alpha_1,\ldots,\alpha_k)$ where $v$ is neutral square-shaped. 
 By i) there exist $\alpha'_1,\ldots,\alpha'_k$ with no neutral vertices such that $\alpha_i\leq \alpha'_i$.
The word $T'=B(v_1;\alpha'_1)\ldots B(v_k;\alpha'_k)$ with $v_i$ neutral square-shaped is in $\cal T_{2,1}$ and satisfies 
$T\leq c(T')=B(v;\alpha'_1,\ldots,\alpha'_k)$.

 Proof of (b): we apply part (a) to $c(T)$, so that there exists $T' \in  \mathcal{T}_{2,1}$ with no neutral vertices such that $c(T)\leq T'$.  It follows that $T\leq c(T)\leq T'$.
    If $\cal{A}\in \Nest_2(T')$ is such that $T=d^{\cal{A}}(T')$, then $c(T)=d^{F\cal{A}}(T')$.  
   Indeed,  let $\cal{B}$ be such that $c(T)=d^{\cal{B}}(T')$. Since $T$ is obtained from $c(T)$ by blow-ups of neutral vertices only, it follows that $\cal{A}\leq \cal{B}$ is obtained from $\cal{B}$ by adding sub intervals only. 
  \end{proof}

  \begin{prop}\label{prop: fonda} Let $[T_1],\ldots,[T_n] \in \Tas$. If $u\in \bigcap\limits_{i=1}^n (\cal T)_{\leq c(T_i)}$, then every element $P_u\in \cal T_{2,1}$ obtained by blowing up the neutral vertices of $u$ satisfies  
  \begin{align*}
P_u&\leq u\leq c(P_u) \\
  [P_u]&\leq [T_i], \ \forall 1\leq i\leq n.
  \end{align*}
\end{prop}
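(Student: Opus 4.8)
The plan is to treat the two conclusions separately, observing that the first one, $P_u\leq u\leq c(P_u)$, does not involve the trees $T_i$ at all, while the second, $[P_u]\leq[T_i]$, will be proved for each index $i$ independently; the intersection hypothesis $u\in\bigcap_i(\cal T)_{\leq c(T_i)}$ merely packages the per-$i$ inequalities $u\leq c(T_i)$. First I would record the elementary mechanism relating blow-ups and the contraction map $c$: every contraction of an edge between two adjacent neutral vertices is the reverse of one of the blow-ups \eqref{deltaroundneutral} or \eqref{deltaneutralsquare}, so any tree $T$ satisfies $T\leq c(T)$. Next, since $P_u$ is obtained from $u$ by blowing up neutral vertices only, the non-neutral vertices are untouched and blowing up a neutral vertex produces again mutually adjacent neutral vertices; hence $u$ and $P_u$ have the same maximal neutral subtrees up to refinement, whence $c(P_u)=c(u)$. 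As $P_u$ is by construction a sequence of blow-ups applied to the neutral vertices of $u$, we have $P_u\leq u$, and combining this with $u\leq c(u)=c(P_u)$ gives the first conclusion.

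For the second conclusion, the first conclusion together with the hypothesis yields $P_u\leq u\leq c(T_i)$, hence $P_u\leq c(T_i)$ in $\cal T$ with $P_u,T_i\in\mathcal T_{2,1}$, and the strategy is to transport this $\cal T$-inequality into a $\Tas$-inequality using the nested-interval dictionary. Assuming first $T_i\notin{\cal T}_{\sq}$, I would apply Lemma \ref{lem: lem separation 1 new}(b) to produce a tree $T_i'$ with no neutral vertices and a restrictive family $\cal A_i\in\Nest_2(T_i')$ such that $T_i=d^{\cal A_i}(T_i')$ and $c(T_i)=d^{F\cal A_i}(T_i')$. Since $P_u\leq c(T_i)\leq T_i'$, the poset bijection $\gamma^{\cal T}_{T_i'}$ of Proposition \ref{prop: bij T no neutral} writes $P_u=d^{\cal B}(T_i')$ for a unique $\cal B\in\Nest(T_i')$ with $F\cal A_i\subseteq\cal B$, and $P_u\in\mathcal T_{2,1}$ forces $\cal B\in\Nest_2(T_i')$. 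Through the compatible bijection $\gamma^{\Tas}_{[T_i']}$ and the quotient $\Nest_2(T_i')\to\Nestas(T_i')$, proving $[P_u]\leq[T_i]$ reduces to proving $[\cal B]\leq[\cal A_i]$ in $\Nestas(T_i')$. For this I would show that for any representative $\cal A_a$ of $[\cal A_i]$, so that $F\cal A_a=F\cal A_i\subseteq\cal B$, the union $\cal B\cup\cal A_a$ is a nested restrictive family with $F(\cal B\cup\cal A_a)=F\cal B$, hence represents $[\cal B]$, and satisfies $\cal A_a\subseteq\cal B\cup\cal A_a$, i.e. $\cal B\cup\cal A_a\leq\cal A_a$; by the definition of the order on $\Nestas$ this gives $[\cal B]\leq[\cal A_i]$.

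The case $T_i\in{\cal T}_{\sq}$ is not covered by Lemma \ref{lem: lem separation 1 new}(b), and I would reduce it to the previous case through the bijection $\natural:\Tasc\to\Tasr$ of Proposition \ref{prop: c et becarre as}, which is a poset bijection commuting with $c$ by Proposition \ref{prop: c et becarre}; applying $\natural$ transports $u$, $P_u$ and $T_i$ out of the purely square regime while preserving all the relevant inequalities. The main obstacle I expect lies precisely in the passage from $P_u\leq c(T_i)$ in $\cal T$ to $[P_u]\leq[T_i]$ in $\Tas$: one must verify carefully that $\cal B\cup\cal A_a$ stays nested and restrictive, and this is exactly where the fact that $\cal B$ already contains the maximal intervals $F\cal A_i$ is essential, since the intervals of $\cal A_a$ are then added only inside cells of $\cal B$ and cannot create new maximal elements. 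Checking these combinatorial compatibilities, together with the correct interaction of the completion $\cal A\mapsto[\widehat{\cal A}]$ with the order on $\Nestas$, is the technical heart of the argument, whereas the contraction computation behind the first conclusion is essentially formal.
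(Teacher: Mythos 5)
Your overall route is the same as the paper's: reduce each $T_i$ to a neutral-vertex-free ambient tree $T_i'$ via Lemma \ref{lem: lem separation 1 new}(b), transport the inequality $P_u\leq c(T_i)\leq T_i'$ into the nested-interval dictionary, prove the class inequality in $\Nestas$, and dispose of the case $T_i\in{\cal T}_{\sq}$ with the bijection $\natural$ of Propositions \ref{prop: c et becarre} and \ref{prop: c et becarre as}. The first conclusion and the translation $P_u=d^{\cal B}(T_i')$ with $F{\cal A}_i\subseteq{\cal B}\in\Nest_2(T_i')$ are fine. The gap is in the step you yourself flag as the technical heart: the family ${\cal B}\cup{\cal A}_a$ is \emph{not} nested in general, and containment of the maximal intervals $F{\cal A}_i$ in ${\cal B}$ does not save it. Concretely, take $T_i'=\Gamma_{1,3}$, ${\cal A}_i=\{[1,3],[1,2]\}$ (so $F{\cal A}_i=\{[1,3]\}$ and $c(T_i)=d^{[1,3]}(T_i')$), and $P_u=d^{\cal B}(T_i')$ with ${\cal B}=\{[1,3],[2,3]\}$; both are legitimate objects of the proposition (take $u=c(T_i)$ and blow up its ternary neutral vertex the other way). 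For the representative ${\cal A}_a={\cal A}_i$ one gets ${\cal B}\cup{\cal A}_a=\{[1,3],[2,3],[1,2]\}$, and $[2,3]\cap[1,2]=\{2\}\notin\{\emptyset,[2,3],[1,2]\}$. This failure is unavoidable with your construction: since ${\cal B}$ is restrictive and contains each $Z\in F{\cal A}_i$, it must carry its own restrictive decomposition of $Z$ by sub-intervals, and two different decompositions of the same interval are generically not jointly nested.

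The repair is to \emph{replace} rather than to take the union, which is what the paper's assertion ``$[{\cal B}']\leq[{\cal A}]$'' amounts to. Given a representative ${\cal A}_a$ of $[{\cal A}_i]$, set ${\cal B}_b:=\bigl({\cal B}\setminus\{X\in{\cal B}\,:\,X\subsetneq Z\ \text{for some}\ Z\in F{\cal A}_i\}\bigr)\cup{\cal A}_a$. One then checks: (i) ${\cal B}_b$ is nested, because any surviving $X\in{\cal B}$ is nested with each $Z\in F{\cal A}_i$ and is not strictly inside it, so for $Y\in{\cal A}_a$ with $Y\subseteq Z$ one always has $X\cap Y\in\{\emptyset,Y\}$ or $Y\subseteq Z= X$; (ii) $F{\cal B}_b=F{\cal B}$, since only non-maximal intervals were removed and only intervals inside members of $F{\cal A}_i$ were added; (iii) ${\cal B}_b$ is restrictive: intervals of ${\cal A}_a$ keep their witnesses in ${\cal A}_a$, intervals $Z\in F{\cal A}_i$ get witnesses from ${\cal A}_a$, and for any other surviving $X\in{\cal B}$ a nestedness argument shows its witnesses in ${\cal B}$ cannot lie strictly inside a member of $F{\cal A}_i$, hence survive. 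With this ${\cal B}_b$ in place of ${\cal B}\cup{\cal A}_a$, your proof goes through and coincides in substance with the paper's.
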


\begin{proof} First notice that if the intersection is non empty, then either $T_i\in\cal T\setminus\cal T_{\sq}$ for all $i$ or
$T_i\in\cal T_{\sq}$ for all $i$.

Assume first that $\forall i, T_i\not\in\cal T_{\sq}$. Let $u\in\cap_{i=1}^n(\cal T)_{\leq c(T_i)}$, and let $P_u \in \cal T_{2,1}$ obtained by blowing up the neutral vertices of $u$.  In particular $u\leq c(P_u)$.
By Lemma \ref{lem: lem separation 1 new},
there exists $T'$ with no neutral vertices such that $T_1=d^{\cal{A}}(T'),\ c(T_1)=d^{F\cal{A}}(T') \text{ with }  \cal{A}\in \Nest_2(T')$.
As a consequence, since $u\leq c(T_1)\leq T'$, there exists $\cal{B}$ such that $u=d^{\cal{B}}(T')$. In particular, $\cal{B}\leq F\cal{A}$. 
Since $P_u\leq u\leq T'$,
one has $P_u=d^{\cal{B}'}(T')$ where  $\cal B'\in\Nest_2(T')$ is a completion of $\cal B$ by sub intervals as in Definition \ref{def: completion}. It implies in particular that $\cal B'\leq F\cal A$ and that the intervals of $F\cal A$ have been completed in $\cal B'$ so that $[\cal B']\leq [\cal A]$.
As a consequence $[P_u]\leq [T_1]$.
The same arguments show that $[P_u]\leq [T_i]$, for all $i$.

Assume that $\forall i, T_i\in\cal T_{\sq}$ and let $u\in\cap_{i=1}^n(\cal T)_{\leq c(T_i)}$. Applying the bijection $\natural$ and Propositions \ref{prop: c et becarre}  and \ref{prop: c et becarre as}, one has 
\begin{align*}
P_{\natural u}&\leq \natural u\leq c(P_{\natural u})  \\
  [P_{\natural u}]&\leq [\natural T_i], \ \forall 1\leq i\leq n.
  \end{align*}
  Notice that $\natural^{-1} P_{\natural u}$ is obtained by blowing up the neutral vertices of $u$, and then is of the form $P_u$.
  Hence applying the bijection $\natural^{-1}$ one gets the result.
\end{proof}

\begin{proof}[Proof of \eqref{eq: iso colim}.]
 
Let us notice first that  $\rBr$ is the colimit of the functor $C:\cal T\rightarrow \Ch$ which associated to $T$ the complex 
$<(\cal T)_{\leq T}>$.
It induces, by restriction of categories, canonical inclusions $i_a:\Theta_\infty(a)\rightarrow \rBr$ so that the system $(\rBr,i_a)$ commutes with the structural maps $\{i^b_a:  \Theta_\infty(a) \subseteq  \Theta_\infty(b)\}_{a\leq b}.$

Let $(D,j_a)$ be a system in $\Ch$ that commutes with those structural maps. Let us define $g:(\rBr(I;x),i_a)\to (D,j_a)$ as follows. 

For a generator  $u\in \rBr(I;x)$, there is an $a\in \Tas(I;x)$ and a $u_a\in \Theta_\infty(a)$ such that $i_a(u_a)=u$, cf.  Lemma \ref{lem: lem separation 1 new}.

Let us define $g(u)=j_a(u_a)$. 

To see that $g$ is well defined it is sufficient to show that, if there are $a,b\in \Tas(I;x)$ and $u_a,u_b$ such that $u=i_a(u_a)=i_b(u_b)$, then there exist  $c\in \Tas(I;x)$  and $u_c   \in \Theta_\infty(c)$ such that $u=i_a(i_c^a(u_c))=i_b(i_c^b(u_c))$. 
Since $i_a,i_b$ are canonical inclusions, let us denote by $u:=i_a(u_a)=u_a=i_b(u_b)=u_b$. 
There exist  $[T_a]\leq a$ and $[T_b]\leq b$ such that $u\in (\mathcal{T})_{\leq c(T_a)}\cap (\mathcal{T})_{\leq c(T_b)}$. 
By Proposition \ref{prop: fonda}, $u\in (\cal T)_{\leq c(P_u)}$ with $[P_u]\leq [T_a]\leq a$ and $[P_u]\leq [T_b]\leq b$, hence $c=[P_u]$
fulfills the conditions.

Let us extend $g$ by linearity and let $u\in\rBr$ and $u_a\in \Theta_\infty(a)$ such that $i_a(u_a)=u$. Since $du=\sum_i\pm u_i$ with $u_i\leq u$ there exists $u_{i,a}\in\Theta_\infty(a)$ such that $u_i=i_a(u_{i,a})$. Hence $g(du)=j_a(du)=dj_a(u)=dg(u)$.
\end{proof}

\subsubsection{Contractibility of the cells}

  \begin{defn}
  Let $n,p\geq 1$. Let $\cal{A}_1,...,\cal{A}_p$ be $p$ elements of $\Nest(S(n))$ (resp. in $\Nest(I(n))$). 
 For $1\leq k\leq p$, let  $A_k^0$ (resp. $A_k^{\lft}$) be the largest interval of $\cal{A}_k$ containing zero (resp. $\lft$) with the convention that it is the empty set if such an interval does not exist in $\cal{A}_k$. 
   The sets $\cal{A}_1,...,\cal{A}_p$ are called \emph{separated} if  $\bigcap_{1\leq k\leq p} A_k^0=\emptyset$ 
   (resp. if  $\bigcap_{1\leq k\leq p} A_k^{\lft}=\emptyset$). 
  \end{defn}

\begin{rem}
 Let $T\in \cal{T}(I;x)$ with no neutral vertices and $T'\leq T$. Let $\cal{A}\in \Nest(T)$ such that $T'=d^{\cal{A}}(T)$. 
 If  $\cal{A}$ is separated for each vertex, then $c(T')=d^{F\cal{A}}(T)$. 
 \end{rem}

Let $n\geq 2$ and $T,T_1,...,T_n\in \mathcal{T}_{2,1}(I;x)$ be such that $[T_1],...,[T_n]\in (\Tas)_{\leq [T]}$. Assume $T\not\in\cal T_{\sq}$.
By Lemma \ref{lem: lem separation 1 new}, there is an element $T'$ with no neutral vertices such that $[T]\leq [T']$  and then $[T_1],...,[T_n]\in (\Tas)_{\leq [T']}$. 
It results that,  for each $1\leq i\leq n$, there exists  $\cal{A}_i\in \Nest_2(T')$  such that $[T_i]=d^{[\cal{A}_i]}([T'])$. %

\begin{lem}\label{lem: zero}
 With the notation above, suppose $\bigwedge_{1\leq i\leq n} F\cal{A}_i\in \Nest(T')$. %
 There exist $Y\in \mathcal{T}_{2,1}(I;x)$ and, for each $1\leq i\leq n$, an element $\cal{B}_i\in \Nest(Y)$ such that:
 \begin{itemize}
  \item $[T_i]=d^{[\cal{B}_i]}([Y])\in \Tas(I;x)$;
  \item $\bigwedge_{1\leq i\leq n} F\cal{B}_i \in \Nest(Y)$  and 
  $F\cal{B}_1,...,F\cal{B}_n$ are separated for each vertex.
 \end{itemize}
\end{lem}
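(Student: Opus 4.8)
The plan is to build $Y$ by absorbing into the base tree, at every vertex, the smallest ``origin interval'' common to all the families $\cal A_i$, so that afterwards at least one family is left with no such interval. Since $T\not\in\cal T_{\sq}$, Lemma \ref{lem: lem separation 1 new} provides $T'$ with no neutral vertices and $\cal A_i\in\Nest_2(T')$ with $[T_i]=d^{[\cal A_i]}([T'])$, and Proposition \ref{prop: bij Tas no neutral} identifies $\Nestas([T'])=\prod_v\Nestas(I_v)$; this lets me work one non-neutral vertex at a time and reassemble by taking products.

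First I would extract a chain from the hypothesis. Fix a non-neutral vertex $v$ of $T'$ and, for each $i$, let $A_{i,v}^0$ be the largest interval of $\cal A_i$ at $v$ containing the distinguished point ($0$ in the closed/cyclic case $S(n)$, $\lft$ in the open case $I(n)$), with $A_{i,v}^0=\emptyset$ if none exists. If some $A_{i,v}^0=\emptyset$ then $v$ is already separated; set $A_v^0:=\emptyset$ and do nothing there. Otherwise all $A_{i,v}^0$ share the distinguished point, and since $\bigwedge_i F\cal A_i=\bigcup_i F\cal A_i$ is nested, any two of them must be comparable for inclusion. Hence $\{A_{i,v}^0\}_i$ is a chain and $A_v^0:=\bigcap_i A_{i,v}^0=A_{i_0(v),v}^0$ for some minimal index $i_0(v)$. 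Collecting $\cal A^0:=\{A_v^0\}_v$ — a per-vertex single-interval, hence nested, sub-collection of $\bigwedge_i F\cal A_i$ — I let $Y\in\mathcal{T}_{2,1}$ be a representative of $d^{[\cal A^0]}([T'])$, so $[Y]\leq[T']$. Because $A_v^0\subseteq A_{i,v}^0$, adjoining $A_v^0$ to $\cal A_i$ only completes $\cal A_i$ in the sense of Definition \ref{def: completion} and so preserves the class; therefore $[T_i]=d^{[\cal A_i]}([T'])=d^{[\cal B_i]}([Y])$, where $\cal B_i\in\Nest(Y)$ is the rebasing of $\cal A_i$ over $Y$, and in particular $[T_i]\leq[Y]$, as required.

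It then remains to verify the two properties. For separation at a neutral vertex of $Y$ the set $\Nest(I_v)$ is trivial, so nothing is required. At a residual non-neutral vertex coming from a vertex $v$ of $T'$, every origin-interval of $\cal A_{i_0(v)}$ is contained in its largest one $A_{i_0(v),v}^0=A_v^0$ (origin-intervals of a nested family form a chain), hence all of them are absorbed into $Y$; thus the rebased family $\cal B_{i_0(v)}$ has no origin-interval there, giving $\bigcap_i B_{i,v}^0=\emptyset$. Since this holds at each vertex, $F\cal B_1,\dots,F\cal B_n$ are separated for each vertex. Finally $\bigwedge_i F\cal B_i\in\Nest(Y)$ follows from the corresponding nestedness over $T'$: rebasing merely deletes the common intervals $\cal A^0$ and keeps every surviving inclusion and disjointness relation intact.

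The hard part will be the combinatorial bookkeeping in the cyclic (closed) case, where the distinguished point $0$ interacts with the root: blowing up along an origin interval turns the new neutral vertex into the root (cf. the first sum in \eqref{deltaclosed} and Proposition \ref{P: bij cyclohedra}), and I must track how an origin interval $A_{i,v}^0\supsetneq A_v^0$ of a non-minimal family distributes over the residual vertex of $Y$. The genuinely delicate point is representative-independence: one has to check that the factorization $[T_i]=d^{[\cal B_i]}([Y])$, the separation claim, and the nestedness of $\bigwedge_i F\cal B_i$ do not depend on the chosen binary resolution of $A_v^0$ inside $\mathcal{T}_{2,1}$ nor on the representatives of $[T']$, $[Y]$ and $[\cal A_i]$; the simplicial relations of Remark \ref{rem: simplicial}, together with the poset bijection of Proposition \ref{prop: bij Tas no neutral}, are the tools I would use to make these verifications rigorous.
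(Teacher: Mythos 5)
Your proposal rests on the same key observation as the paper's proof: at each non-separated vertex $v$ the origin intervals $A^0_{1,v},\dots,A^0_{n,v}$ form a chain (nestedness of $\bigwedge_i F\cal{A}_i$ plus the common element $0$, resp.\ $\lft$), so their intersection $A^0_v$ equals the smallest of them, $A^0_{i_0(v),v}$, and absorbing $A^0_v$ into the new base tree $Y$ strips the family indexed by $i_0(v)$ of all origin intervals at $v$, which is the separation statement. Where you genuinely differ is in \emph{how} $Y$ absorbs $A^0_v$: you blow up $T'$ along $\cal{A}^0=\{A^0_v\}_v$, so the absorbed inputs hang on new neutral vertices and $[Y]=d^{[\cal{A}^0]}([T'])\leq [T']$; the paper instead restructures $T'$, transferring the $J_v$-inputs to the parent vertex (closed, non-root case) or into the preceding root (open, non-initial case), creating neutral vertices only in the two unavoidable root cases, and then compares everything through auxiliary elements $P_i\leq Y$ with $[P_i]=[T_i]$ rather than factoring $[T_i]$ through $[Y]$ directly. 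Your recipe is more uniform (no case analysis, and $[T_i]\leq[Y]\leq[T']$ once the factorization is justified); the paper's $Y$ keeps the rebased families and their separation visible by inspection.

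The place where your write-up has a real hole is the sentence ``therefore $[T_i]=d^{[\cal{A}_i]}([T'])=d^{[\cal{B}_i]}([Y])$, where $\cal{B}_i$ is the rebasing of $\cal{A}_i$ over $Y$.'' Adjoining the single interval $A^0_v$ to $\cal{A}_i$ is not enough: $Y$ is the blow-up of $T'$ along a \emph{restrictive completion} $\widehat{\cal{A}^0}$ of $\cal{A}^0$, and to get a representative of $[T_i]$ lying below $Y$ in $\mathcal{T}$ you need a restrictive completion $\cal{A}'_i$ of $F\cal{A}_i$ with $\cal{A}'_i\supseteq\widehat{\cal{A}^0}$. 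For arbitrary choices this fails at the level of trees: a completion subinterval of $\cal{A}_i$ inside $A^0_{i,v}$, say $\{5,0,1\}$ when $A^0_v=\{4,5,0,1\}$ in $S(5)$, partially overlaps a subinterval $\{4,5\}$ chosen in $\widehat{\cal{A}^0}$, and then no rebasing exists. So the issue is not, as your last paragraph suggests, independence of the choices, but the existence of one coordinated choice. The fix uses the hypothesis once more: every interval of $\bigwedge_j F\cal{A}_j$ is disjoint from, contained in, or contains $A^0_v$ (no partial overlap), so one may first choose $\widehat{\cal{A}^0}$ to contain all intervals of $\bigwedge_j F\cal{A}_j$ lying inside the $A^0_v$, and then complete $F\cal{A}_i\cup\widehat{\cal{A}^0}$ only by intervals disjoint from or containing $A^0_v$. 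With these compatible choices the rebasing exists (intervals disjoint from $A^0_v$ persist; an interval $M\supsetneq A^0_v$ becomes $(M\setminus A^0_v)\cup\{0\}$, again a strict interval), it preserves inclusions and disjointness, and all three assertions follow as you indicate; moreover for $i=i_0(v)$ every origin interval of $\cal{A}'_i$ at $v$ then lies in $\widehat{\cal{A}^0}$, so $\cal{B}_{i_0(v)}$ has none, which is the separation.
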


\begin{proof} Recall that the vertices of $T'$ are either closed or open, and are ordered. We build $Y$ by backward induction
on the vertices $v$ of $T'$.
For each $1\leq i\leq n$, let $K_i:=d^{F\cal{A}_i}(T')\in \cal{T}$. 
We concentrate on the vertices $v$  such that the $F\cal{A}_1^v,...,F\cal{A}_n^v$ are not separated.

\noindent{\sl First case.}
 If $v$ is an open vertex, then $T'=R_1\ldots R_k$ is a word of open rooted trees. Let us write $R_i=B(v_i,\alpha_i^1,\ldots,\alpha_i^{l_i})$. By assumption, the set
 $J_v=\bigcap_{1\leq k\leq p} A_k^{v,\lft}=\{\lft,1,\ldots,s_v\}$ is non empty and is the maximal interval such that all the inputs of $v$ in $T'$ that are indexed by elements of $J_v$ are at the left of $v$ in $K_1,...,K_n$. 
 Starting from $T'$, we construct $Y$ locally  as follows. 
 \begin{enumerate}
  \item In case there exists $i>1$ such that $v_i=v$, we set $Y=R_1\ldots R'_{i-1}R'_i\ldots R_k$ with $R'_{i-1}=B(v_{i-1};\alpha_{i-1}^1,\ldots,\alpha_{i-1}^{l_{i-1}},\alpha_i^1,\ldots,\alpha_i^{s_v})$ and $R'_i=B(v_i;\alpha_i^{s_v+1},\ldots,\alpha_i^{l_i})$.
 \item If $v=v_1$, we set $Y=B(\sq\,;\alpha_1^1)\ldots B_(\sq\,;\alpha_1^{s_v})B(v_1;\alpha_1^{s_v+1},\ldots,\alpha_1^{l_1})R_2\ldots R_k$ (Figure \ref{fig: zero annulation 1}).
 \end{enumerate}
 
\noindent{\sl Second case.} If $v$ is a closed vertex, let $T'_v=B(v,\alpha_1,\ldots,\alpha_l)$ be the maximal subtree of $T'$ whose root is $v$.
 The set $J_v=\bigcap_{1\leq k\leq p} A_k^{v,0}=\{i_v<\ldots<0<\ldots <j_v\}$ is non empty and is the maximal interval such that 
 all the inputs of $v$ in $T'$ indexed by $J_v$ are below the vertex $v$ in $K_1,...,K_n$. 
 Starting from $T'$, we construct $Y$ locally  as follows. 
 \begin{enumerate}
  \item If the vertex $v$ is not  the root of $T'$, then there exists a vertex $w$ in $T'$ such that
  $T'_w=B(w,\beta_1,\ldots,\beta_m)$ with $T'_v=\beta_i$ for some $i$. The element $Y$ is obtained from $T'$ by replacing $T'_w$ by
 \[B(w;\beta_1,\ldots,\beta_{i-1},\alpha_{1},\ldots,\alpha_{j_v},B(v,\alpha_{j_v+1},\ldots,\alpha_{i_v-1}),\alpha_{i_v},\ldots,\alpha_l,\beta_{i+1},\ldots,\beta_m)\]
  \item If $v$ is the root vertex of $T'$, then  $T'=T'_v$. Let $T''_v=B(v,\alpha_{j_{v+1}},\ldots,\alpha_{i_{v-1}})$ and $X$ be
  a binary tree with neutral round shape vertices with $|J_v|$ leaves. The tree $Y$ is obtained from $X$  by attaching 
  $\alpha_1,\ldots,\alpha_{j_v},T''_v,\alpha_{i_v},\ldots,\alpha_m$ to its leaves (Figure \ref{fig: zero annulation 3}).
\end{enumerate}

Let us point out that $K_i$ is not necessarily  smaller than $Y$. However, for each $1\leq i\leq n$, there exists $P_i$ such that $P_i\leq K_i$ and $P_i\leq Y$. 
Indeed,  $P_i$ is obtained by adding $J_v$  to $F\cal{A}_i^v$, for each vertex $v$. 

For each $1\leq i\leq n$, let $\cal{C}_i$ be such that $P_i=d^{\cal{C}_i}(Y)$. 
The elements $\cal{C}_1,...,\cal{C}_n$ are separated by construction and  $\bigcup_{1\leq i\leq n} \cal{C}_i \in \Nest(Y)$.  
For $1\leq i\leq n$, the element $\cal{B}_i\in \Nest_2(Y)$ is obtained from $\cal{C}_i$ by adding subintervals. %

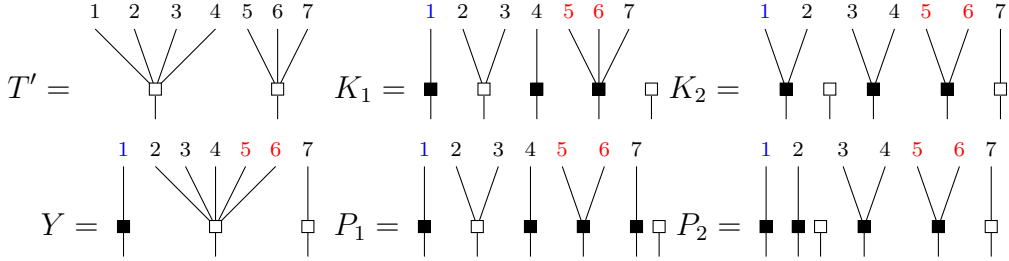
\begin{figure}[H]
\begin{align*}
T'=
\begin{tikzpicture}[baseline=-0.65ex,scale=.8]
\quatre{\wrect}{1}{2}{3}{4};
 \end{tikzpicture}
 \begin{tikzpicture}[baseline=-0.65ex,scale=.8]
 \trois{\wrect}{5}{6}{7};
 \end{tikzpicture}
 ~~& K_1=
 \begin{tikzpicture}[baseline=-0.65ex,scale=.8]
 \une{\brect}{\textcolor{blue}{1}};
 \end{tikzpicture}
 \begin{tikzpicture}[baseline=-0.65ex,scale=.8]
 \deux{\wrect}{2}{3};
 \end{tikzpicture}
 \begin{tikzpicture}[baseline=-0.65ex,scale=.8]
 \une{\brect}{4};
 \end{tikzpicture}
 \begin{tikzpicture}[baseline=-0.65ex,scale=.8]
 \trois{\brect}{\textcolor{red}{5}}{\textcolor{red}{6}}{7};
 \end{tikzpicture}
 \begin{tikzpicture}[baseline=-0.65ex,scale=.8]
 \zero{\wrect};
 \end{tikzpicture}
~~~~ K_2= 
\begin{tikzpicture}[baseline=-0.65ex,scale=.8]
 \deux{\brect}{\textcolor{blue}{1}}{2};
 \end{tikzpicture}
 \begin{tikzpicture}[baseline=-0.65ex,scale=.8]
 \zero{\wrect};
 \end{tikzpicture}
 \begin{tikzpicture}[baseline=-0.65ex,scale=.8]
 \deux{\brect}{3}{4};
 \end{tikzpicture}
 \begin{tikzpicture}[baseline=-0.65ex,scale=.8]
 \deux{\brect}{\textcolor{red}{5}}{\textcolor{red}{6}};
 \end{tikzpicture}
 \begin{tikzpicture}[baseline=-0.65ex,scale=.8]
 \une{\wrect}{7};
 \end{tikzpicture}
 \\
 Y=
 \begin{tikzpicture}[baseline=-0.65ex,scale=.8]
 \une{\brect}{\textcolor{blue}{1}};
 \end{tikzpicture}
 \begin{tikzpicture}[baseline=-0.65ex,scale=.8]
 \cinq{\wrect}{2}{3}{4}{\textcolor{red}{5}}{\textcolor{red}{6}};
 \end{tikzpicture}
 \begin{tikzpicture}[baseline=-0.65ex,scale=.8]
 \une{\wrect}{7};
 \end{tikzpicture}
 ~~& P_1=
 \begin{tikzpicture}[baseline=-0.65ex,scale=.8]
 \une{\brect}{\textcolor{blue}{1}};
 \end{tikzpicture}
 \begin{tikzpicture}[baseline=-0.65ex,scale=.8]
 \deux{\wrect}{2}{3};
 \end{tikzpicture}
 \begin{tikzpicture}[baseline=-0.65ex,scale=.8]
 \une{\brect}{4};
 \end{tikzpicture}
 \begin{tikzpicture}[baseline=-0.65ex,scale=.8]
 \deux{\brect}{\textcolor{red}{5}}{\textcolor{red}{6}};
 \end{tikzpicture}
 \begin{tikzpicture}[baseline=-0.65ex,scale=.8]
 \une{\brect}{7};
 \end{tikzpicture}
 \begin{tikzpicture}[baseline=-0.65ex,scale=.8]
 \zero{\wrect};
 \end{tikzpicture}
 ~~~~P_2= 
 \begin{tikzpicture}[baseline=-0.65ex,scale=.8]
 \une{\brect}{\textcolor{blue}{1}};
 \end{tikzpicture}
 \begin{tikzpicture}[baseline=-0.65ex,scale=.8]
 \une{\brect}{2};
 \end{tikzpicture}
 \begin{tikzpicture}[baseline=-0.65ex,scale=.8]
 \zero{\wrect};
 \end{tikzpicture}
 \begin{tikzpicture}[baseline=-0.65ex,scale=.8]
 \deux{\brect}{3}{4};
 \end{tikzpicture}
 \begin{tikzpicture}[baseline=-0.65ex,scale=.8]
 \deux{\brect}{\textcolor{red}{5}}{\textcolor{red}{6}};
 \end{tikzpicture}
 \begin{tikzpicture}[baseline=-0.65ex,scale=.8]
 \une{\wrect}{7};
 \end{tikzpicture}
\end{align*}\caption{The element $Y$ from $T'$, $K_1$ and $K_2$ (open case).}\label{fig: zero annulation 1}
\end{figure}

\begin{figure}[H]
\begin{equation*}
T'=
\begin{tikzpicture}[baseline=-0.65ex,scale=.8]
 \cinqvert{\wrond}{1}{2}{3}{4}{(0,0)};
 \cinqvert{\wrond}{5}{6}{7}{8}{(0,1.5)};
 \end{tikzpicture}
 K_1=
 \begin{tikzpicture}[baseline=-0.65ex,scale=.8]
 \deuxvert{\brond}{}{}{(0,.6)};
 \deuxvert{\wrond}{}{}{(0,0)};
 \draw (-.9,1.65) node [left] {\tiny$2$};
 \draw (-.9,1.) node [left] {\tiny$1$};
 \draw (.9,1.65) node [right] {\tiny$3$};
 \draw (.9,1.) node [right] {\tiny$4$};
 \draw (-.9,2.45) node [left] {\tiny$\textcolor{red}{5}$};
 \draw (-.9,3.1) node [left] {\tiny$6$};
 \draw (.9,2.45) node [right] {\tiny$\textcolor{red}{8}$};
 \draw (.9,3.1) node [right] {\tiny$7$};
 \deuxvert{\brond}{}{}{(0,1.4)};
 \deuxvert{\wrond}{}{}{(0,2)};
 \end{tikzpicture}
 K_2=
 \begin{tikzpicture}[baseline=-0.65ex,scale=.8]
 \cinqvert{\wrond}{1}{2}{3}{4}{(0,0)};
 \cinqvert{\brond}{\textcolor{red}{5}}{6}{7}{\textcolor{red}{8}}{(0,.9)};
 \unevert{\wrond}{(0,1.5)};
 \end{tikzpicture}
Y=
 \begin{tikzpicture}[baseline=-0.65ex,scale=.8]
 \cinqvert{\wrond}{1}{}{}{4}{(0,0)};
 \draw (-.7,0.55) node [left] {\tiny$2$};
 \draw (.7,0.55) node [right] {\tiny$3$};
 \deuxvert{\wrond}{\textcolor{red}{5}}{\textcolor{red}{8}}{(0,0)};
 \deuxvert{\wrond}{6}{7}{(0,1.5)};
 \end{tikzpicture}
 \end{equation*}\caption{The element $Y$ from $T'$, $K_1$ and $K_2$ (closed case).}\label{fig: zero annulation 3}
 \end{figure}
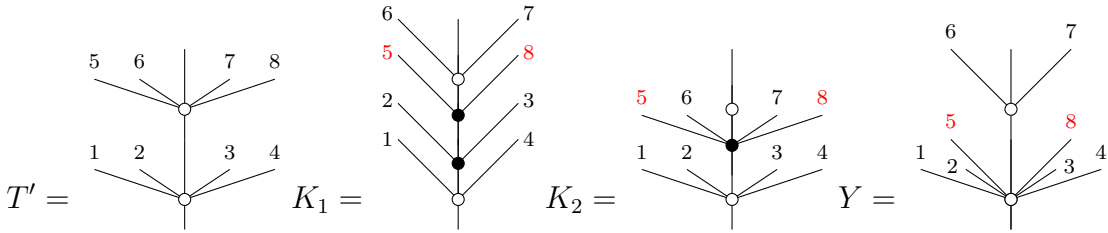
 \end{proof}

 \begin{rem}\label{rm: separation}
 With the notations above, for each open (resp. closed) vertex $v$ of $Y$, there is at least one $1\leq j\leq n$ such that  $\cal{B}^v_j$ has no intervals containing $\lft$ (resp. $0$).  Indeed, this corresponds to the $1\leq j\leq n$ such that the intersection interval $J_v$ is the interval of $F\cal{A}_j^v$ containing $\lft$ (resp. $0$). 
 \end{rem}

\begin{prop}\label{prop: PT contract}
 Let $n\geq 2$ and $T,T_1,...,T_n\in \mathcal{T}_{2,1}(I;x)$ be such that $[T_1],...,[T_n]\in (\Tas)_{\leq [T]}$. Then
 either $\bigcap_{1\leq i\leq n} (\cal T)_{\leq c(T_{i})}=\emptyset$ or, 
 there exist  $P,Q\in \cal T$  such that  
 
 \begin{align*}
\bigcap_{1\leq i\leq n} (\cal T)_{\leq c(T_{i})}&=(\cal T)_{\leq P}, \\
\bigcap_{1\leq i\leq n} (\cal T)_{\leq c(T_{i})}\bigcap (\cal T)_{\leq c(T)}&=(\cal T)_{\leq Q}
 \end{align*}
\end{prop}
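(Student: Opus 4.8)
The plan is to identify, in each non-empty case, the maximal element of the intersection and to show that it generates the whole down-set. First I would dispose of the degenerate cases. If some of the $T_i$ lie in ${\cal T}_{\sq}$ and others do not, then for $T_j\in{\cal T}_{\sq}$ the tree $c(T_j)$ has a neutral square-shaped root and its down-set contains only trees of that shape, whereas for $T_j\notin{\cal T}_{\sq}$ the tree $c(T_j)$ carries a labelled open vertex; no tree can lie below both, so $\bigcap_i(\cal T)_{\leq c(T_i)}=\emptyset$. If all the $T_i$ lie in ${\cal T}_{\sq}$, I would transport the whole statement through the poset isomorphism $\natural$ of Propositions \ref{prop: c et becarre} and \ref{prop: c et becarre as}, which commutes with $c$; this reduces the problem to the case where no $T_i$ belongs to ${\cal T}_{\sq}$.

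In that remaining case I would use Lemma \ref{lem: lem separation 1 new} to produce a single tree $T'$ without neutral vertices with $[T_i]\in(\Tas)_{\leq[T']}$ for all $i$, and write $[T_i]=d^{[\cal A_i]}([T'])$ with $\cal A_i\in\Nest_2(T')$. The emptiness criterion then comes from Proposition \ref{prop: fonda}: if $u$ lies in the intersection, the tree $P_u$ obtained by blowing up its neutral vertices satisfies $[P_u]\leq[T_i]$ for all $i$, hence, writing $[P_u]=d^{[\cal C]}([T'])$, the (disjoint, hence nested) family $F\cal C$ contains every $F\cal A_i$; thus $\bigwedge_iF\cal A_i=\bigcup_iF\cal A_i$ is contained in a nested family and is itself nested. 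Contrapositively, if $\bigwedge_iF\cal A_i\notin\Nest(T')$ the intersection is empty, which is the first alternative of the dichotomy.

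When $\bigwedge_iF\cal A_i\in\Nest(T')$ I would invoke Lemma \ref{lem: zero} to replace $T'$ by a tree $Y$ and the $\cal A_i$ by separated families $\cal B_i\in\Nest(Y)$ with $[T_i]=d^{[\cal B_i]}([Y])$ and $\bigwedge_iF\cal B_i\in\Nest(Y)$. Since the $\cal B_i$ are separated at each vertex, the remark preceding Lemma \ref{lem: zero} gives $c(T_i)=d^{F\cal B_i}(Y)$, so the poset bijection $\gamma^{\cal T}_Y$ of Proposition \ref{prop: bij T no neutral} identifies the intersection with $\{\cal C\in\Nest(Y)\mid\bigcup_iF\cal B_i\subseteq\cal C\}$. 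As $\bigcup_iF\cal B_i=\bigwedge_iF\cal B_i$ is already nested, it is the $\subseteq$-minimal such family, and I would set
\[
P:=d^{\bigwedge_iF\cal B_i}(Y).
\]
Then $P\leq c(T_i)$ for every $i$ because $F\cal B_i\subseteq\bigwedge_iF\cal B_i$, while any $u=d^{\cal C}(Y)$ in the intersection has $\bigwedge_iF\cal B_i\subseteq\cal C$ and hence $u\leq P$; this yields $\bigcap_i(\cal T)_{\leq c(T_i)}=(\cal T)_{\leq P}$.

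Finally, for the intersection with $(\cal T)_{\leq c(T)}$ I would apply the construction above to the enlarged family $T_1,\dots,T_n,T$, which still lies in $(\Tas)_{\leq[T]}$ since $[T]\leq[T]$. Writing $c(T)=d^{F\cal B_T}(Y)$ for the corresponding separated family $\cal B_T\in\Nest(Y)$, the hypothesis $[T_i]\leq[T]$ is exactly what guarantees that $F\cal B_T$ is nested against every $F\cal B_i$, so that $\bigwedge_iF\cal B_i\wedge F\cal B_T$ remains in $\Nest(Y)$; the enlarged intersection is therefore non-empty and equals $(\cal T)_{\leq Q}$ with $Q:=d^{\bigwedge_iF\cal B_i\wedge F\cal B_T}(Y)$. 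I expect the main obstacle to be the bookkeeping that turns ``$u\leq c(T_i)$'' into an inclusion of nested families: this is precisely where separation, and thus Lemma \ref{lem: zero}, is indispensable, since without it the identity $c(T_i)=d^{F\cal B_i}(Y)$ breaks down as soon as a maximal interval of $\cal A_i$ surrounds the root direction.
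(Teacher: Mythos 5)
Your reductions up through Lemma \ref{lem: zero} track the paper's proof exactly: the $\natural$-transport for the ${\cal T}_{\sq}$ case, the passage to a tree $T'$ with no neutral vertices via Lemma \ref{lem: lem separation 1 new}, and the use of Proposition \ref{prop: fonda} to show that a non-empty intersection forces $\bigwedge_i F\cal{A}_i$ to be nested. The gap is your claim that, after Lemma \ref{lem: zero}, one has $c(T_i)=d^{F\cal{B}_i}(Y)$, and hence that $\gamma^{\cal T}_Y$ identifies $\bigcap_i(\cal T)_{\leq c(T_i)}$ with the set of nested families on $Y$ containing $\bigcup_i F\cal{B}_i$. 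The separation produced by Lemma \ref{lem: zero} is \emph{joint}: at each vertex $v$ the intersection $\bigcap_i B_i^{v,0}$ (resp.\ $\bigcap_i B_i^{v,\lft}$) of the maximal root-containing intervals is empty. The unnumbered remark you invoke requires \emph{individual} separation --- no interval of $\cal{B}_i$ at any vertex contains $0$ (resp.\ $\lft$) --- and that cannot be arranged in general: Remark \ref{rm: separation} guarantees only that \emph{at least one} index $j$ is root-free at each vertex, not every index. Whenever a maximal interval of $\cal{B}_i$ at a vertex $v$ does contain $0$ or $\lft$, while the parent vertex $w$ has a maximal interval containing the edge towards $v$, the contraction $c(T_i)$ merges two neutral vertices attached to the \emph{distinct} vertices $v$ and $w$ of $Y$, producing a neutral vertex $z$ whose inputs mix $In(v)$ with $In(w)$. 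Such a tree is strictly above $K_i:=d^{F\cal{B}_i}(Y)$ and is not of the form $d^{\cal C}(Y)$ for any $\cal C\in\Nest(Y)$ (equivalently, it is not below $Y$), so elements of $(\cal T)_{\leq c(T_i)}$ do not a priori correspond to nested families on $Y$, and your identification of the intersection collapses.

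This missing step is exactly where the paper does its real work. It sets $P=\bigwedge_i K_i$, which lies in the intersection since $P\leq K_i\leq c(T_i)$, and then proves the converse inclusion by showing that any $a\in\bigcap_i(\cal T)_{\leq c(T_i)}$ satisfies the \emph{stronger} inequality $a\leq K_i$ for every $i$: if $K_i<c(T_i)$, the mixed neutral vertex $z$ of $c(T_i)$ (with $w<z<v$) cannot exist in $c(T_{j_0})$ for the index $j_0$ supplied by Remark \ref{rm: separation}, so $a\leq c(T_i)$ and $a\leq c(T_{j_0})$ together force $z$ to be blown apart in $a$; iterating over the vertices of $T'$ gives $a\leq K_i$, hence $a\leq\bigwedge_i K_i=P$. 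You correctly flagged this as ``the main obstacle'', but Lemma \ref{lem: zero} does not dissolve it by restoring $c(T_i)=d^{F\cal{B}_i}(Y)$; it only provides the joint-separation hypothesis that makes the blow-apart argument run. (Your treatment of the second equality --- applying the first part to the enlarged family $T_1,\dots,T_n,T$, with non-emptiness supplied by Proposition \ref{prop: fonda} --- is the paper's argument and would be fine once the first part is repaired.)
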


\begin{proof} 
Assume first that $T\not\in{\cal T}_{\sq}$ and that the intersection $ \bigcap_{1\leq i\leq n} (\cal T)_{\leq c(T_{i})}$ is non empty.
Let $u$ be an element in the intersection.

 By Lemma \ref{lem: lem separation 1 new}, there exists  $T'\geq T$ with no neutral vertices, and $\cal A\in \Nest_2(T')$ such that $T=d^{\cal A} T'$ and $c(T)=d^{\cal FA} T'$.
 For $1\leq i\leq n$, let $\cal{A}_i\in \Nest_2(T')$ be such that $[T_i]=d^{[\cal{A}_i]}([T'])$. 
 For $1\leq i\leq n$, let us define $K_i:= d^{F\cal{A}_i}(T')$. Notice that $K_i\leq c(T_i)$. 
 
Let $P_u$ be obtained as in Proposition \ref{prop: fonda}, that is, 
$P_u$ is obtained as a blow-up of neutral vertices of $u\leq c(T_i)$ and $[P_u]\leq [T_i]$ for all $i$, so that
$P_u\leq K_i$ for all $i$. Writing $P_u$ as $d^{\cal B} T'$ with $\cal B\in \Nest_2(T')$, one gets
 $\cal B\leq F\cal A_i$ for all $i$ and thus $\cal B\leq \wedge_i F\cal A_i$. As a consequence $\wedge_i F\cal A_i$ is nested.

By Lemma \ref{lem: zero}, we can assume that $F\cal{A}_1,...,F\cal{A}_n$ are separated. Let us define $P=d^{\wedge_iF\cal{A}_i}(T)$. It follows 
 from Proposition \ref{prop: bij T no neutral}  that $P=\wedge_i K_i$. 
In particular,   for $1\leq i\leq n$,  $P\leq K_i\leq c(T_i),$ so that
 $P\in \bigcap_{1\leq i\leq n} (\mathcal{T})_{\leq c(T_{i})}$.  

 Let us show that $ \bigcap_{1\leq i\leq n} (\mathcal{T})_{\leq c(T_{i})}\subset (\cal T)_{\leq P}.$ In order to do this, we prove that any $a\in \bigcap_{1\leq i\leq n} (\mathcal{T})_{\leq c(T_{i})}$ satisfies
$a\leq K_i$ for all $i$.

Notice that if $K_i<c(T_i)$, then we are necessarily in one of the two following situations:
there exist  vertices $w<v$ in $T'$ such that $F\mathcal A_i^v$ has an interval containing  $0$ and $F\mathcal A_i^w$ has an interval containing the labelling of the edge joining $w$ to $v$; or there exist  vertices $w<v$ in $T'$ such that $F\mathcal A_i^v$ has an interval containing  $\lft$ and $F\mathcal A_i^w$ has an interval containing $\rgt$ (see the example of the trees $K_1, K_2$ in Figure \ref{fig: zero annulation 1} and the tree $K_1$ in Figure \ref{fig: zero annulation 3}).
Hence,  $c(T_i)$ has a neutral vertex $z$ with some inputs in $In(v)$ and some others in $In(w)$ and satisfying $w<z<v$.
Remark \ref{rm: separation} implies that there is at least one $1\leq j_0\leq n$ such that  
 $F\cal{A}_{j_0}^v$ has no interval containing $0$ or $\lft$. Consequently, there is no vertex $z\in c(T_{j_0})$ satisfying
 the same conditions than the one for $c(T_i)$. Since $a\leq c(T_i)$ and $a\leq c(T_{j_0})$, it is obtained as a blow-up of
 $c(T_i)$ and $c(T_{j_0})$, which means that $z$ must be blowned up in order to sepate the inputs in $In(v)$ from the inputs in $In(w)$. Applying this argument for every vertices in $T'$ one gets  $a\leq K_i$ for every $i$. Hence $a\leq \wedge_i K_i=P$.

Assume $T\in {\cal T}_{\sq}$. Since $\natural$ commutes with the contraction map (Proposition \ref{prop: c et becarre}) and passes to the quotient in $\Tas$
(Proposition \ref{prop: c et becarre as}),
one has
\[ \natural\left(\bigcap_{1\leq i\leq n} (\cal T)_{\leq c(T_{i})}\right)=\bigcap_{1\leq i\leq n} ({\cal T})_{\leq c(\natural T_{i})}\]
which is either empty or of the form $(\cal T)_{\leq P}$.

Let us prove the second part of the Proposition. We assume that $\bigcap_{1\leq i\leq n} (\cal T)_{\leq c(T_{i})}$  is non empty
and let $u$ be an element in the intersection. Applying 
Proposition \ref{prop: fonda}, there exists 
$P_u\in \mathcal{T}_{2,1}$ so that  $P_u\leq u$, $[P_u]\leq [T_i]\leq [T]$ and $P_u\leq T\leq c(T)$. As a consequence,
 $P_u\in \bigcap_{1\leq i\leq n} (\cal T)_{\leq c(T_{i})}\bigcap (\cal T)_{\leq c(T)}$. Applying the first part of the Proposition, there exists $Q\in\cal T$ such that
 \[ \bigcap_{1\leq i\leq n} (\cal T)_{\leq c(T_{i})}\bigcap (\cal T)_{\leq c(T)}=(\cal T)_{\leq Q}\]
\end{proof}

\begin{prop} For all $[T]\in\Tas$, $\Theta_\infty([T])$ is contractible.
\end{prop}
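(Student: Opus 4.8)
The plan is to realize $\Theta_\infty([T])$ as the homotopy type of the nerve of its covering family of cells and then collapse that nerve to a point using the contraction $c(T)$ as a cone vertex. Recall from the isomorphism \eqref{eq: iso colim} established above that $\Theta_\infty([T])=\colim_{a\in\cont}C_a$ is computed concretely as the union $\bigcup_{[T']\leq[T]}\langle(\mathcal{T})_{\leq c(T')}\rangle$ of subcomplexes of $\rBr(I;x)$, where each $C_a=\langle(\mathcal{T})_{\leq a}\rangle$ is contractible by Proposition \ref{prop: bij T no neutral}. So I am in the situation of a complex covered by a family of contractible subcomplexes, and the goal becomes purely combinatorial: to understand the intersection pattern of this family.

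First I would record that this covering family is \emph{good} in the sense required by the cellular decomposition formalism of Definition \ref{de: cell decompo}. The contractibility of each cell $C_a$ is Proposition \ref{prop: bij T no neutral}; the colimit identification is \eqref{eq: iso colim}; and the needed cofibration condition, together with the fact that every nonempty finite intersection $\bigcap_i C_{c(T_i)}$ is \emph{again} a standard contractible cell $\langle(\mathcal{T})_{\leq P}\rangle$, is furnished by the first part of Proposition \ref{prop: PT contract} (the cokernels being free since they are spanned by basis trees). Thus $C^{[T]}\colon\cont\to\Ch$ is a $\cont$-cellular decomposition of $\Theta_\infty([T])$, and Lemma \ref{lem: weak equiv BA X} (equivalently, the standard nerve lemma for covers closed under intersection) yields a weak equivalence between $\Theta_\infty([T])$ and the nerve of the covering family $\{C_a\}_{a\in\cont}$.

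It then remains to prove that this nerve is contractible, and this is exactly where the \emph{second} statement of Proposition \ref{prop: PT contract} is used. A simplex of the nerve is a family $\{c(T_1),\ldots,c(T_k)\}$ with $\bigcap_{i}C_{c(T_i)}\neq\emptyset$. By the second part of Proposition \ref{prop: PT contract}, for such a family one has $\bigcap_{i}(\mathcal{T})_{\leq c(T_i)}\cap(\mathcal{T})_{\leq c(T)}=(\mathcal{T})_{\leq Q}$, and the element $P_u$ produced in its proof (via Proposition \ref{prop: fonda}, satisfying $P_u\leq u$ and $P_u\leq T\leq c(T)$) witnesses that $(\mathcal{T})_{\leq Q}$ is nonempty whenever $\bigcap_i(\mathcal{T})_{\leq c(T_i)}$ is nonempty. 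Hence adjoining the vertex $c(T)$ to any simplex again gives a simplex: $c(T)\in\cont$ is a cone vertex of the nerve, so the nerve is a cone and therefore contractible. Consequently $\Theta_\infty([T])$ is contractible, as desired.

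The hard part will be the control of the cell intersections encoded in Proposition \ref{prop: PT contract}: because the contraction $c$ does not commute naively with the blow-up operations, the union defining $\Theta_\infty([T])$ is not visibly a lattice of subcomplexes, and establishing that all intersections are honest cells $\langle(\mathcal{T})_{\leq P}\rangle$ rests on the delicate separation argument of Lemma \ref{lem: zero} (reducing to families $F\mathcal{A}_1,\ldots,F\mathcal{A}_n$ that are separated at every vertex). Once that machinery is in place, the two parts of Proposition \ref{prop: PT contract} do all the work: the first makes the cover good, and the second exhibits $c(T)$ as the cone vertex that collapses the nerve. The case $T\in\mathcal{T}_{\sq}$ is handled, as elsewhere, by transporting the argument through the bijection $\natural$ of Propositions \ref{prop: c et becarre} and \ref{prop: c et becarre as}, which commutes with $c$.
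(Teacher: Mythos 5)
Your proof is correct and follows essentially the same route as the paper: cover $\Theta_\infty([T])$ by the cells $\langle(\mathcal{T})_{\leq c(T')}\rangle$ indexed by $\mathcal{P}^{[T]}$, invoke the nerve theorem (nonempty intersections being contractible cells by Propositions \ref{prop: PT contract} and \ref{prop: bij T no neutral}), and use the second part of Proposition \ref{prop: PT contract} to exhibit the nerve as a cone with apex $c(T)$, hence contractible. The only cosmetic difference is that you route the nerve comparison through the cellular-decomposition formalism of Lemma \ref{lem: weak equiv BA X}, whereas the paper cites the Borsuk--Bj\"orner nerve theorem directly.
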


\begin{proof} In this proof we use the results of Borsuk and Bjorner in  \cite{Bor48} or \cite{bjorner}, and more specifically the nerve theorem. 
Let $[T]$ be in $\Tas$. Following  \cite{Bor48} or \cite{bjorner} we associate to the covering of $\Theta_\infty([T])$ by 
$<(\cal T)_{\leq a}>_{a\in P^{[T]}}$ the nerve $N^{[T]}$ which is the simplicial complex whose vertices are labelled by $a\in P^{[T]}$.  
Simplices are given by $\sigma\subset P^{[T]}$ such that $\cap_{a\in\sigma} (\cal T)_{\leq a}\not=\emptyset$.
Applying Propositon \ref{prop: PT contract} and Proposition \ref{prop: bij T no neutral} we get that a non empty intersection is contractible.
The nerve theorem asserts that $\Theta_\infty([T])$  has the homotopy type of that of $N^{[T]}$. Proposition \ref{prop: PT contract} implies that $N^{[T]}$  is a cone whose apex is the vertex $c(T)$, hence  is contractible.
\end{proof}

 \appendix

\section{Generating operations of $\Lambda_\cl\rBr$}\label{A:A}
 
 We recall here our notation for the generating operations of $\Lambda_\cl\rBr$. This operad acts on a pair of  dgvs $((L,d_L),(R,d_R))$ with

 $\bullet$ $(R,\mu,d_R)$ is a dg algebra, whose structure maps are equivalently given by a square zero coderivation $d_{\su R}+\partial_2$ on $T^c(\su R)$;
 
 $\bullet$   For $n\geq 2$, the maps $\delta_n: L^{\otimes n}\rightarrow L$ assemble with $d_L$ to a square zero coderivation on $T^c(L)$. This implies that
 $$\sum_{i+j=n+1} \delta_i\circ\delta_j+ \delta_{n}\circ d_L+d_L\circ\delta_n =0,$$ that is, 
 $$d_{\Lambda_\cl\rBr}(\delta_n)=-\sum_{i+j=n+1} \delta_i\circ\delta_j,$$  which is Relation (\ref{deltaroundneutral}).

$\bullet$ $M_{1,q}:L\otimes   L^{\otimes q}\rightarrow L$  defines a morphism of coalgebras
$M:T^cL\otimes T^cL\rightarrow T^cL$ which is associative and is compatible with $\delta$.
Notice that for $x\in L$ and $\underline y\in L^{\otimes n}$, one has
$$M(x;\underline y)=\sum\pm y_{(1)}\otimes M(x;y_{(2)})\otimes y_{(3)},$$
which implies that
$$d_L M_{1,n}+\sum_{a+b=n+1}\delta_b\circ M_{1,a}=M\circ d_L+\sum_{a+b=n+1} M_{1,a}\circ\delta_b$$
that is
$$d_{\Lambda_\cl\rBr} M_{1,n}=\sum_{a+b=n+1} M_{1,a}\circ \delta_b-\delta_b\circ M_{1,a},$$
which is Relation (\ref{deltaclosed}).

$\bullet$ Recall that 
$$K:T^c{\su R}\otimes T^c(L)\rightarrow T^c{\su R},$$ 
is the unique coalgebra morphism so that
$K_{0,n}=\su G_n$ and  $K(\su r,\underline l)=\su\Gamma_{1,q}(r,\underline l)$.

In addition $K$ is compatible with the differential.
For $\su r\in\su R$ and $\underline l\in L^{\otimes b}$, one has
\begin{multline*}
K(\su r,\underline l)=K_{1,n}(\su r,\underline l)+\sum_{\substack{i+j=n\\ i>0}}(-1)^{|l_{(1)}||\su r|} K_{0,i}(l_{(1)})\otimes K_{1,j}(\su r,l_{(2)})+
\sum_{\substack{i+j=n\\ j>0}} K_{1,i}(\su r,l_{(1)})\otimes  K_{0,j}(l_{(2)})\\+\text{ higher terms }
\end{multline*}

As a consequence, the compatibility with the differential gives
\begin{multline*}
d_{\su R}K_{1,n}(\su r,\underline l)+\partial_2\left(
\sum_{\substack{i+j=n\\ i>0}}(-1)^{|l_{(1)}||\su r|} K_{0,i}(l_{(1)})\otimes K_{1,j}(\su r,l_{(2)})+
\sum_{\substack{i+j=n\\ j>0}} K_{1,i}(\su r,l_{(1)})\otimes  K_{0,j}(l_{(2)})\right)
=\\
K(d_{\su R}\su r,\underline l)+( K\circ (1_{\su R}\otimes (d_L+\delta))(\su r,\underline l)
\end{multline*}

which is equivalent to
\[-d_R\Gamma_{1,n}(r,\underline l)-\sum_{i+j=n}\left (\mu(G_i,\Gamma_{1,j}\tau_{1,i}\cdot(r,\underline l)+\mu(\Gamma_{1,j},G_i)(r,\underline l)\right)\]
\[=-\Gamma(d_R r,\underline l)- ( \Gamma\circ d_L+\sum_{i+j=n+1}\Gamma_i\circ\delta_j)( r,\underline l)\]
and to
\[d_{\Lambda_\cl\rBr} \Gamma_{1,n}=\sum_{i+j=n+1}\Gamma_i\circ\delta_j-\sum_{i+j=n}\left(\mu(G_i,\Gamma_{1,j})\cdot \tau_{1,i}+\mu(\Gamma_{1,j},G_i)\right)\]
 which is Relation (\ref{deltaopen}), where $\tau_{1,i}$ is the permutation that permutes the first variable with the next $i$ variables.

$\bullet$ The same computation holds for $K_0:T^c(L)\rightarrow T^c(\su R)$:
$$d_{\su R}K_{0,n}(\underline l)+\sum_{a+b=n} \partial_2(K_{0,a}(l_{(1)})\otimes K_{0,_b}(l_{(2)}))=K_0((d_L+\delta)(\underline l)),$$
and
$$-d_RG_n(\underline l)+\sum_{a+b=n}(-1)^{|l_{(1)}|+1}\mu(G_a(l_{(1)}))\otimes G_b(l_{(2)}))=(G\circ d_L+G\circ\delta)(\underline l).$$
As a consequence, we have:
$$d_{\Lambda_\cl\rBr}G_n=-\sum_{a+b=n} \mu(G_a\otimes G_b)-\sum_{i+j=n+1} G_i\circ \delta_j$$
which is Relation (\ref{deltaneutralsquare}).

\def\cprime{$'$} \def\cprime{$'$} \def\cprime{$'$} \def\cprime{$'$}
\providecommand{\bysame}{\leavevmode\hbox to3em{\hrulefill}\thinspace}
\providecommand{\MR}{\relax\ifhmode\unskip\space\fi MR }
\providecommand{\MRhref}[2]{%
  \href{http://www.ams.org/mathscinet-getitem?mr=#1}{#2}
}
\providecommand{\href}[2]{#2}

\end{document}